\documentclass[11pt]{article}
\usepackage{latexsym}
\usepackage{amssymb}
\usepackage{amsthm}
\usepackage{amsmath}
\usepackage{color}
\usepackage{tabularx}
\usepackage{url}
\usepackage{mathtools}
\usepackage{subfigure}
\usepackage{array}
\usepackage{epsfig}
\usepackage{fullpage}
\usepackage{hyperref}
\usepackage{bm}
\usepackage{bbm}
\usepackage{scalerel}
\usepackage{soul}
\usepackage{float}
\usepackage{cleveref}
\usepackage{graphicx}
\graphicspath{{./C:/Users/lenovo/Desktop/images/}}

\usepackage{titlesec}
\titleformat{\section}{\large\bf}{\thesection}{1em}{}
\titleformat{\subsection}{\normalsize\bf}{\thesubsection}{1em}{}

\usepackage{amsmath}

\usepackage[usenames,dvipsnames]{xcolor}
\usepackage{tikz}
\definecolor{myblue}{RGB}{80,80,160}
\definecolor{mygreen}{RGB}{80,160,80}

\usepackage{titlesec}

\setcounter{secnumdepth}{4}
\titleformat*{\section}{\LARGE\bfseries}
\titleformat*{\subsection}{\Large\bfseries}
\titleformat*{\subsubsection}{\large\bfseries}
\titleformat*{\paragraph}{\bfseries}
\titleformat*{\subparagraph}{\bfseries}

\newtheorem{theorem}{Theorem}[section]

\newtheorem{proposition}[theorem]{Proposition}
\newtheorem{definition}{Definition}[section]
\newtheorem{claim}[theorem]{Claim}

\newtheorem{lemma}[theorem]{Lemma}

\newtheorem{corollary}[theorem]{Corollary}

\newtheorem*{remark}{Remark}
\newtheorem*{example}{Example}
\newtheorem{observation}[theorem]{Observation}
\newtheorem*{note}{Note}



\def\cR{{\cal R}}


\def\bbF{{\mathbb F}}

\def\bbN{{\mathbb N}}

\def\bbR{{\mathbb R}}

\def\bbZ{{\mathbb Z}}
\def\bbA{{\mathbb A}}

\newcommand*{\bigcircledast}{\mathop{\scalerel*{\circledast}{\big(}}\limits}

\newcommand{\cupdot}{\mathbin{\mathaccent\cdot\cup}}


\newcommand{\abs}[1]{\left\vert {#1} \right\vert}


\makeatletter
\newcommand{\ostar}{\mathbin{\mathpalette\make@circled p}}
\newcommand{\make@circled}[2]{%
  \ooalign{$\m@th#1\smallbigcirc{#1}$\cr\hidewidth$\m@th#1#2$\hidewidth\cr}%
}
\newcommand{\smallbigcirc}[1]{%
  \vcenter{\hbox{\scalebox{0.77778}{$\m@th#1\bigcirc$}}}%
}
\makeatother

\newcommand{\remove}[1]{}

\title{\bf  Hyper-Regular Graphs and High Dimensional Expanders}

\author{Ehud Friedgut \\
\small Faculty of Mathematics and Computer Science, Weizmann Institute of Science, Rehovot, Israel\\[-0.8ex]
\small\tt ehud.friedgut@weizmann.ac.il\\
\and
Yonatan Iluz \\
\small  Faculty of Mathematics and Computer Science, Weizmann Institute of Science, Rehovot, Israel \\[-0.8ex]
\small\tt yonatan.iluz@weizmann.ac.il 
\\
 {\em Dedicated to Nati Linial, a human high dimensional expander, on occasion of his 70th birthday}
}

\begin{document}

\maketitle

\begin{abstract}
Let $G= (V,E)$ be a finite graph. For $d_0>0$ we say that $G$ is $d_0$-regular, if every $v\in V$ has degree $d_0$. We say that $G$ is $(d_0, d_1)$-regular, for $0<d_1<d_0$, if $G$ is $d_0$ regular and for every $v\in V$, the subgraph induced on $v$'s neighbors is $d_1$-regular. Similarly, $G$ is $(d_0, d_1,\ldots, d_{n-1})$-regular for $0<d_{n-1}<\ldots<d_1<d_0$, if $G$ is $d_0$ regular and for every $v\in V$, the subgraph induced on $v$'s neighbors is $(d_1,\ldots, d_{n-1})$-regular (i.e. for every $1\leq i\leq n-1$, the joint neighborhood of every clique of size $i$ is $d_i$-regular); In that case, we say that $G$ is  an $n$-dimensional hyper-regular graph (HRG).
Here we define a new kind of graph product, through which we build examples of infinite families of $n$-dimensional HRG such that the joint neighborhood of every clique of size at most $n-1$ is connected. In particular, relying on the work of Kaufman and Oppenheim, our product yields an infinite family of $n$-dimensional HRG for arbitrarily large $n$ with good expansion properties. This answers a question of Dinur regarding the existence of such objects.

\end{abstract}

\section{Introduction}
Let $G= (V,E)$ be a finite graph. For $d_0 > 0$ we say that $G$ is $d_0$-regular, if every $v\in V$ has degree $d_0$. We say that $G$ is $(d_0, d_1)$-regular, for $0<d_1<d_0$, if $G$ is $d_0$ regular and for every $v\in V$, the subgraph induced on $v$'s neighbors is $d_1$-regular. Similarly, $G$ is $(d_0, d_1,\ldots, d_{n-1})$-regular for $0<d_{n-1}<\ldots<d_1<d_0$, if for every $1\leq i\leq n-1$, the joint neighborhood of every clique of size $i$ is $d_i$-regular.
\begin{example}
The complete graph on $n$ vertices $K_n$ is $(n-1, n-2,\ldots,1)$-regular, and by taking graphs consisting of arbitrarily many disjoint copies of $K_n$ one has an infinite family of $(n-1,n-2,\ldots,1)$-regular graphs.
\end{example}
While $K_n$ is connected and has good expansion properties for fixed $n$, if one turns it into an infinite family of graphs by taking disjoint copies of it, the connectivity (and of course the expansion) are lost. The challenge is finding infinite families of connected graphs, where the links have fixed degree and are also connected. An additional desired property beyond connectivity is good expansion.
Expansion of a graph requires that it is simultaneously sparse and highly connected.
The property of being an expander is significant in many contexts; In theoretical computer science, expanders are useful in the design and analysis of communication networks, and have surprising utilities in other computational settings such as in the theory of error correcting codes and the theory of pseudo-randomness. A very pleasing example of
such an application is Dinur's proof of the PCP Theorem \cite{RS07}. Now, recent advances in PCP theory \cite{DK17} require more specialized expander graphs, in particular, their construction uses highly regular hypergraphs, and could be simplified by using an infinite family of $(a,b,c,d)$-regular graphs which are expanders. However, until now, no such graphs with link-connectivity existed. A word regarding the importance of the graphs of higher regularity for the construction of HDX, such as those in  \cite{DK17}; The transition operators in those constructions, which move between various levels of the underlying hypergraph ("up-down'' and "down-up'' walks) necessitate weights on the links of the vertices. In the case of a hypergraph whose 1-dimensional skeleton is a hyper-regular graph this turns into a simple, non-weighted random walk, which makes the whole analysis much simpler.

When studying hyper-regular expanders, the first obstacle we encounter, relative to the $1$-dimensional case, is the difficulty to generate non-explicit (probabilistic) constructions: While for every $a > 2$ asymptotically almost every $a$-regular graph is a good expander, it is easy to verify that almost every $a$-regular graph is very far from being $(a,b)$-regular, as the neighborhood of a vertex is typically an anticlique.

During the special year (2016-2017) devoted to high dimensional expanders, at the Institute for Advanced Studies in the Hebrew University, Irit Dinur raised the question whether one can find a purely combinatorial (non-algebraic) construction of high dimensional expanders. As a first step in this direction, Chapman, Linial and Peled \cite{chapman2018expander} designed infinite families of $(a,b)$-regular graphs, and asked the question which we address in this paper: do their exist arbitrarily long sequences $(d_0,\ldots,d_r)$ and infinite families of connected (and preferably well expanding) 
$(d_0,\ldots,d_r)$-regular graphs?

\paragraph*{Previous work}
\begin{enumerate}
   
    \item Chapman-Linial-Peled: Families of $(a,b)$-regular graphs that expand both locally and globally (Polygraph constructions). \cite{chapman2018expander}
    \item Kaufman-Oppenheim: Bounded degree simplicial complexes arising from elementary matrix groups, which are high dimensional expanders obeying the local spectral expansion property. In their fundamental example the 1-dimensional skeletons of this family are $(a,b)$-regular. \cite{kaufman2017simplicial}
     \item In a public lecture of the second author, \cite{Ehud}, he observed that the skeleton of a tiling of the 4-dimensional hyperbolic space (namely, the order-5 5-cell four-dimensional hyperbolic honeycomb) is (120,12,5,2)-regular, with connected links.
      Taking finite quotients ( which can be found using the fundamental group of a manifold created by gluing faces of the fundamental domain of the tessellation), one gets an infinite family of $(120,12,5,2)$-regular graphs. Until the current work this was the only known example of a $(d_0,\ldots, d_r)$-HRG for $r>2$.
    \item In \cite{CLST22} the authors (a) Independently discover the $(120,12,5,2)$-regular family mentioned above, and calculate its expansion. (b) Manage to create an infinite families of $(d_0,\ldots,d_r)$-HRG  with good expansion for arbitrarily large $r$, where all but one level of the links are connected.

\end{enumerate}

\paragraph*{Main results} As mentioned above, up until now, the $(120,12,5,2)$-regular family was the only known family of $(a,b,c,d)$-regular graphs that are connected, and all other known families of connected HRG were $(a,b)$-regular.
In this paper we introduce two main examples (and tools for construction of arbitrarily more) of infinite families of $(d_0, d_1,\ldots, d_{n-1})$-regular connected graphs for $0<d_{n-1}<\ldots<d_1<d_0$ and for arbitrarily large $n$. Our construction is based on a multi-partite variant of a standard graph product and a symmetrization trick. The method should be applied to graphs that have certain symmetries such as those found in a subgroup geometry system. We prove that the method preserves connectivity and expansion properties, while regularizing the graph. In particular, relying on the work of Kaufman and Oppenheim on subgroup geometry systems \cite{kaufman2017simplicial}, one of the examples has good expansion properties. The other example is also a subgroup geometry system, arising from  symmetries of triangulation of Euclidean space (and finite tori). 

\begin{theorem}
For every $\lambda>0$, $n\in \bbN$ there exists an infinite family of $(d_0,d_1,\ldots,d_{n-1})$-regular graphs which are 1-dimensional skeletons of $\lambda$-high dimensional expanders, (namely their $(n-1)$-dimensional clique-complex).
\end{theorem}
(See definition \ref{hdx} for the definition of  $\lambda$- high dimensional expanders.)
\paragraph*{Organization} 
We start with defining basic terms in hypergraph theory, and hyper-regularity. In \ref{sub:partite} we define the multi-partite graph product and show that it preserves connectivity and link connectivity when applied on pure clique complexes. Then, we present in \ref{sub:sym} the symmetrization action and prove that graphs having  a certain property (type-regular) can be fully regularized using the symmetrization and the multi-partite product. We complete this section in \ref{sub:spectral} by showing that the expansion is also preserved under the product of graphs of this kind (pure, type-regular graphs). \\
In section \ref{sec:sgs} we show that hypergraphs arising from subgroup geometry systems are in particular pure, type-regular connected where all links are connected as well. I.e., any such hypergraph can be fully regularized by our method to form a strongly gallery connected (\ref{def:sgc}) HRG.\\
In section \ref{sec:method} we show a general scheme of generating an infinite family of finite HRG given an infinite subgroup geometry system, by taking appropriate quotients.\\
In section \ref{sec:constructions} we present two main constructions of HRG, both arising from subgroup geometry systems, where one of them has good expansion properties. We continue by investigating several interesting ad-hoc variants of those constructions. We end with the calculations of the degrees of the main constructions shown in that section.   
\subsection{Hypergraphs and simplicial complexes}
\begin{figure}
    \centering
    \includegraphics[scale=0.4]{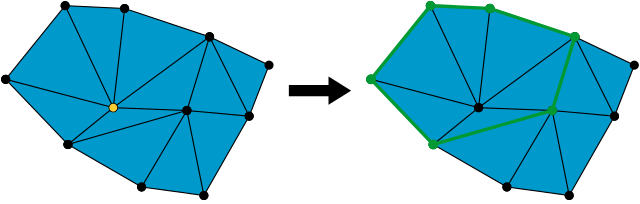}
    \caption{A vertex and its link. The black lines and the blue triangles are the $1$-dimensional and $2$-dimensional faces, respectively.}
    \label{fig:link}
\vspace{1cm}
\end{figure}
\begin{definition}[Hypergraph]
A hypergraph is a pair $(V, E)$ with $V$ a set of vertices and $E$ a set of subsets of $V$.
\end{definition}

\begin{definition}[Simplicial complex] A simplicial complex is a hypergraph that is downwards closed with respect to containment.
Namely, if $S\in E$ and $S'\subset S$, then $S'\in E$. $S\in E$ is called a face. We usually partition a simplicial complex $X$ to $X = X(0) \cupdot X(1) \cupdot \ldots \cupdot X(d)$ where $X(i)$ is the set of all faces of size $i+1$ (dimension $i$). \\
We say that a simplicial complex is $d$-dimensional if the maximal face size is $d + 1$.\\
A $d$-dimensional simplicial complex is \textbf{pure} if every face is contained in a $d$-dimensional
face. 
\end{definition}

\begin{definition}[Clique complex]
The clique complex $X(G)$ of an undirected graph $G$ is a simplicial complex formed by the sets of vertices in the cliques of $G$.
\end{definition}

\begin{definition}[Link] Let $X$ be a $d$-dimensional simplicial complex and $S\in X(i)$. The link of $S$ is a $(d-i-1)$-dimensional simplicial complex defined by $X_S = \{ T\setminus S : \: S\subset T \in X\}$. When considering the clique-complex of a graph, the link of a clique $C\subset V$ is the clique-complex of the subgraph induced on the joint neighborhood of $C$.
\end{definition}

\begin{definition}[$1$-Skeleton] Let $X$ be a simplicial complex. The $1$-skeleton of $X$ is the graph whose edges are $X(1)$ and vertices are $X(0)$.
\end{definition} 

\begin{definition}[Strongly gallery connected] \label{def:sgc} Let $X$ be a simplicial complex. We say that $X$ is strongly gallery connected if $X$ is connected (i.e., the $1$-skeleton of $X$ is connected) and every link of dimension $\geq 1$ is connected.  
\end{definition}

\subsection{Hyper-regular graphs}

\begin{definition}[hyper-regular graph]
Let $G = (V,E)$ be a graph and let  $\:d_0>d_1>\ldots>d_{n-1}>0$. We say that $G$ is $(d_0, d_1,\ldots, d_{n-1})$-regular if $G$ is $d_0$-regular and for every $0<i\leq n-1$, the link of every $i$-clique is $d_i$-regular.
\end{definition}
Similarly, a simplicial complex $X$ is $(d_0, d_1,\ldots, d_{n-1})$-regular if $X(0)$ is $d_0$ regular and the 1-skeleton of the link of every $(i-1)$-dimensional face is $d_i$-regular. \\
An infinite family of $(d_0, d_1,\ldots, d_{n-1})$ - regular graphs is one that contains these graphs of arbitrarily large size w.r.t the number of vertices. Such families can be constructed trivially from some finite graph by joining multiple copies of it, but the interesting case is when all graphs are strongly gallery connected, and even better- are good expanders.

\begin{observation}
\label{obs:reg}
$G$ is  $(d_0, d_1,\ldots, d_{n-1})$-regular if and only the link of every $i$-clique is \textbf{of size $d_{i-1}$}.
\end{observation}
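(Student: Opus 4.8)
The plan is to reduce the equivalence to a single local translation, applied one level at a time, between ``the link of an $i$-clique is regular'' and ``the links of the $(i+1)$-cliques all have a fixed size''. First I would unwind the definition of a link: for an $i$-clique $C$, the link $X_C$ is by definition the clique complex of the subgraph of $G$ induced on the joint neighborhood $N(C)=\bigcap_{v\in C}N(v)$. In particular the vertex set of $X_C$ is $N(C)$, so the size of $X_C$ is $|N(C)|$, and $X_C$ is $d$-regular precisely when every $w\in N(C)$ has exactly $d$ neighbors inside $N(C)$.

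The key step is to identify those neighbors. A vertex $u$ is a neighbor of $w$ inside $N(C)$ iff $u$ is adjacent to $w$ and to every vertex of $C$, i.e.\ iff $u\in N(C\cup\{w\})$. Hence the degree of $w$ in $X_C$ equals $|N(C\cup\{w\})|$, which is exactly the size of the link of the $(i+1)$-clique $C\cup\{w\}$. Reading this in the other direction, every $(i+1)$-clique $C'$ is of the form $C\cup\{w\}$ with $C=C'\setminus\{w\}$ an $i$-clique and $w\in N(C)$. Therefore, for each $1\le i\le n-1$,
\[
\text{every }i\text{-clique has a }d_i\text{-regular link}\quad\Longleftrightarrow\quad\text{every }(i+1)\text{-clique has a link of size }d_i .
\]
The base level is the same bookkeeping, since $G$ is $d_0$-regular iff $|N(v)|=d_0$ for every vertex $v$, i.e.\ iff every $1$-clique has a link of size $d_0$.

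Finally I would string the equivalences together: by the definition of hyper-regularity, $G$ is $(d_0,\dots,d_{n-1})$-regular iff $G$ is $d_0$-regular and every $i$-clique has a $d_i$-regular link for $1\le i\le n-1$; by the translations above this says precisely that every $j$-clique has a link of size $d_{j-1}$ for all $1\le j\le n$, which (reindexing $j$ to $i$) is exactly the claimed statement. I do not expect a genuine obstacle here — the content is a pure unwinding of the definition of a link. The only point requiring a moment's care is the handling of degenerate cases, such as a clique whose link is empty or a level containing no cliques at all; but since the $d_i$ are assumed strictly positive, an $i$-clique with nonempty link always produces an $(i+1)$-clique, so the two sides of each equivalence quantify over matching ranges and nothing collapses.
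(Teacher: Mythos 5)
Your proof is correct: the key identity, that the degree of a vertex $w$ in the link of an $i$-clique $C$ equals the size of the link of the $(i+1)$-clique $C\cup\{w\}$, is exactly the right observation, and chaining it level by level (with the $d_0$-regularity of $G$ as the base case) gives the stated equivalence. The paper records this as an observation without proof, and your argument is precisely the definitional unwinding it implicitly relies on, including the correct handling of the vacuous cases.
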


\subsection{Partite graph product}
\label{sub:partite}
An $n$-partite graph is one where the vertex set is partitioned into $n$-parts, and all edges are between vertices in distinct parts. We will define a product between two such graphs, $H$, $G$, the result being a subgraph of the usual tensor product $H \otimes G$.
\begin{definition}[Partite graph product] 
Let $G_1 = (V_1, E_1), \: G_2 = (V_2, E_2)$ be $n$-partite graphs, where $V_1 = (V_1^1, V_1^2, \ldots, V_1^n), \: V_2 = (V_2^1, V_2^2,\ldots, V_2^n)$ are ordered tuples of mutually disjoint subsets. We define their $n$-partite product, denoted by $\circledast$ , to be
$$
 G_1 \circledast G_2 = G_{12} = (V_{12}, E_{12})
$$
Where
$$
V_{12} = \Big( \: V_1^1 \times V_2^1 \:, \: V_1^2 \times V_2^2 \:, \ldots ,\: V_1^n \times V_2^n \: \Big)
$$
$$
E_{12} =\bigcup_{1\leq i<j\leq n} \Big\{ \big\{(v_1^i, v_2^i), (v_1^j, v_2^j)\big\}: \: (v_1^i, v_1^j) \in E_1 \text{ and } (v_2^i, v_2^j) \in E_2\Big\}
$$
\end{definition}

The transition matrix of the resulting graph is easy to compute. Take for example a product of $3$-partite graphs:
\newline
\newline
$$
\left[
\begin{array}{ccc}
0 & A_{12} & A_{13} \\\\
A_{12}^T & 0 & A_{23} \\\\
A_{13}^T & A_{23}^T & 0
\end{array}
\right]
\bigcircledast
\left[
\begin{array}{ccc}
0 & B_{12} & B_{13} \\\\
B_{12}^T & 0 & B_{23} \\\\
B_{13}^T & B_{23}^T & 0
\end{array}
\right]
=
\left[
\begin{array}{ccc}
0 & A_{12} \otimes B_{12} & A_{13} \otimes B_{13} \\\\
(A_{12}\otimes B_{12})^T & 0 & A_{23} \otimes B_{23} \\\\
(A_{13}\otimes B_{13})^T & (A_{23}\otimes B_{23})^T & 0
\end{array}
\right]
$$
\newline
\begin{remark}
For clique complexes $X(G_1), X(G_2)$, the product is naturally defined as follows:
$$
X(G_1) \circledast X(G_2) = X(G_1 \circledast G_2)  
$$
\end{remark}
We want the product to preserve important properties of the underling graphs, such as connectivity and expansion properties (see \ref{sub:spectral}). It turns out that it is   sufficient for the graphs to be pure and strongly gallery connected.
\begin{claim}
Let $X(G_1), X(G_2)$ be pure n-dimensional, $(n+1)$-partite strongly gallery connected clique complexes.
Then $X(G_1 \circledast G_2)$ is of the same kind, and in particular is strongly gallery connected.
\end{claim}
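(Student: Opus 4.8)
The plan is to induct on the dimension $n$, the engine of the induction being a structural identity that expresses the links of a $\circledast$-product as $\circledast$-products of links.

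Write $G_{12}=G_1\circledast G_2$. The first step is to describe the cliques of $G_{12}$. Since $G_1$ and $G_2$ are $(n{+}1)$-partite, a clique of $G_{12}$ meets each part $V_1^i\times V_2^i$ at most once, and unwinding the definition of $\circledast$ shows that the cliques of $G_{12}$ supported on a set of parts $P\subseteq\{1,\dots,n{+}1\}$ are precisely the pairs $(\sigma,\tau)$ with $\sigma$ a clique of $G_1$ and $\tau$ a clique of $G_2$, both supported on $P$. This makes $X(G_{12})$ visibly $(n{+}1)$-partite, and also pure of dimension exactly $n$: given a clique $(\sigma,\tau)$, extend $\sigma$ to an $(n{+}1)$-clique $\hat\sigma$ of $G_1$ and $\tau$ to an $(n{+}1)$-clique $\hat\tau$ of $G_2$ (purity of the factors); as $\hat\sigma$ and $\hat\tau$ meet every part, $(\hat\sigma,\hat\tau)$ is an $n$-face of $X(G_{12})$ containing $(\sigma,\tau)$.

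The heart of the argument is that $\circledast$ commutes with links: if $C$ is the clique of $G_{12}$ corresponding to $(\sigma,\tau)$ on a part-set $P$, then
$$
X(G_{12})_{C}\;=\;X(G_1)_{\sigma}\;\circledast\;X(G_2)_{\tau},
$$
both sides regarded as $(n{+}1{-}|P|)$-partite clique complexes on the parts outside $P$. Indeed a vertex $(w_1,w_2)$ in a part $j\notin P$ is joined in $G_{12}$ to all of $C$ iff $w_1$ is joined to all of $\sigma$ and $w_2$ to all of $\tau$, and two such link vertices are adjacent iff they were adjacent in $G_{12}$, i.e. iff the two coordinate edges lie in $X(G_1)_\sigma$ and in $X(G_2)_\tau$ respectively — exactly the defining condition of $\circledast$. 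Furthermore each $X(G_i)_\sigma$ is pure of dimension $n-|P|$, is $(n{+}1{-}|P|)$-partite, and is again strongly gallery connected (purity passes to links; and since $(X(G_i)_\sigma)_\rho=X(G_i)_{\sigma\cup\rho}$, the $1$-skeleton of $X(G_i)_\sigma$ and all its deeper links of dimension $\ge 1$ are links of dimension $\ge 1$ of the strongly gallery connected $X(G_i)$, hence connected — using here $1\le|P|\le n-1$). Therefore, for every clique $C$ of $G_{12}$ with $1\le|C|\le n-1$, the link $X(G_{12})_C$ is the $\circledast$-product of two pure, strongly gallery connected, $(n{+}1{-}|C|)$-partite complexes of dimension $n-|C|<n$, and so by the induction hypothesis it is strongly gallery connected — in particular connected. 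This verifies that every link of $X(G_{12})$ of dimension $\ge 1$ is connected.

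It remains to show $X(G_{12})$ is itself connected — the one assertion the identity does not cover, morally ``the link of the empty face''. I would prove this directly by matching walk lengths across the two factors: for vertices $(a_1,a_2),(b_1,b_2)$ of $G_{12}$, a walk of length $L$ from $a_1$ to $b_1$ in $G_1$ and one of the same length $L$ from $a_2$ to $b_2$ in $G_2$ combine coordinate-wise into a walk in $G_{12}$. When $n\ge 2$, purity puts every vertex of each $G_i$ in a triangle, so $G_i$ is connected and non-bipartite and between any two of its vertices walks of all sufficiently large lengths exist; taking $L$ large suffices. When $n=1$, each $G_i$ is connected bipartite, so walk lengths between two given vertices have a fixed parity; but the parts of $G_{12}$ are $V_1^1\times V_2^1$ and $V_1^2\times V_2^2$, so $(a_1,a_2)$ and $(b_1,b_2)$ lie in one part of $G_{12}$ iff $a_1,b_1$ lie in one part of $G_1$ iff $a_2,b_2$ lie in one part of $G_2$; hence the two parities agree and a common large $L$ of that parity works — this is essentially Weichsel's theorem that each component of a tensor product of connected bipartite graphs is connected. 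I expect this walk-length matching, and the parity bookkeeping in the bipartite base case in particular, to be the only genuinely delicate point; everything else is unwinding the definition of $\circledast$ and the fact that it commutes with links.
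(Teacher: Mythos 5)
Your reduction of link-connectivity to the identity $X(G_{12})_C = X(G_1)_\sigma \circledast X(G_2)_\tau$ plus induction on dimension is sound, and it is essentially the paper's route (the paper isolates that identity as a separate claim and notes that the connectivity argument ``works similarly'' inside links). The genuine gap is in your top-level connectivity argument for $n\ge 2$. The vertex set of $G_1\circledast G_2$ is $\bigcup_i V_1^i\times V_2^i$, not $V_1\times V_2$: a pair $(x,y)$ is a vertex only when $x$ and $y$ lie in parts with the \emph{same index}. Hence two coordinate walks of equal length $L$ do not in general combine into a walk of the product --- at every intermediate step the two walks must also sit in matching parts. Your length-matching, together with the non-bipartiteness you extract from triangles, removes only the parity obstruction. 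That is the whole story for $n=1$ (with two parts the part sequence is forced to alternate, so length plus starting part determines it and the Weichsel-style argument is valid), but with three or more parts two walks of equal length between matching endpoints can traverse different part sequences, in which case the coordinatewise pair is not even a sequence of vertices of $G_{12}$. So the argument as written establishes nothing for $n\ge 2$, which is the case of interest.

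The missing idea --- and the actual content of the paper's proof --- is to use purity to shadow \emph{part sequences} rather than lengths. Take a path $v_i=x_0,x_1,\ldots,x_m=v_j$ in $G_1$ with part sequence $p_0,\ldots,p_m$. Since $G_2$ is pure, $w_i$ lies in a maximal face, which contains exactly one vertex of every part; walking inside that clique along the same part sequence produces a matching $G_2$-walk from $w_i$ to the clique's vertex $u_j$ in part $p_m=j$, so $(v_i,w_i)$ is connected to $(v_j,u_j)$. Then repeat symmetrically: take a path from $u_j$ to $w_j$ in $G_2$ (it starts and ends in part $j$) and shadow it inside a maximal face of $G_1$ containing $v_j$; that shadow walk returns to $v_j$, connecting $(v_j,u_j)$ to $(v_j,w_j)$. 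If you replace your length-matching step by this two-stage clique-shadowing (which also runs verbatim inside each link, links of pure complexes being pure), the rest of your write-up goes through.
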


\begin{proof}
First, we show that $G_{1}\circledast G_{2}$ is connected: Let $(v_i,w_i) \in V_1^i\times V_2^i, (v_j,w_j) \in V_1^j\times V_2^j$.
They are connected if and only if there are paths $v_i, a_{k_1}, a_{k_2},\ldots, a_{k_m}, v_j$ in $G_1$ and $w_i, b_{k_1}, b_{k_2},\ldots, b_{k_m}, w_j$ in $G_2$ of the same length $(m+1)$ such that $a_{k_l} \in V_1^{k_l}$ and $b_{k_l} \in V_2^{k_l}$ for every $l$. Clearly, if either $v_i, v_j$ or $w_i, w_j$ are part of a $d$-dimensional face - i.e., $d+1$-clique with one vertex from each component - then $(v_j,w_j)$ and $(v_i,w_i)$ are connected.\\
$G_1$ is connected, thus there exists a path from $v_i$ to $v_j$. $G_2$ is pure, so $w_i$ is contained in a d+1-clique with some $u_j\in V_2^j$ $\Rightarrow$ there is a path from $(v_i, w_i)$ to $(v_j, u_j)$. $G_2$ is connected, so there exists a path  from $u_j$ to $w_i$. $G_1$ is also pure, therefore $v_j$ in also contained in a $(d+1)$-clique. Hence, there is a path from $(v_j, u_j)$ to $(v_j, w_j)$.\\
A similar proof works for the connectivity of  the links (using the claim below and the fact that a link of a pure clique complex is pure itself). The other properties are trivial.
\end{proof}

\begin{claim}
\label{claim:linkprod}
Let $G_1 = \big((V^i_1)_{i=0}^n,\: E_1\big), G_2= \big((V^i_2)_{i=0}^n,\: E_2\big)$. Let $S \subset V(G_1\circledast G_2)$ be a clique and let $X_S$ be its link. Then, $X_S = X_{S|_{G_1}} \circledast X_{S|_{G_2}}$.
\end{claim}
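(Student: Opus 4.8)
The plan is to unwind both sides directly and check they describe the same simplicial complex; since every complex in sight is a clique complex, it suffices to match vertex sets and then $1$-skeletons.

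First I would fix notation. Because $G_1\circledast G_2$ is $(n+1)$-partite with parts $V_1^i\times V_2^i$, any clique $S\subset V(G_1\circledast G_2)$ meets each part at most once, so write $S=\{(v_i,w_i):i\in I\}$ for some $I\subseteq\{0,\ldots,n\}$ with $v_i\in V_1^i$, $w_i\in V_2^i$. Reading off the definition of $E_{12}$, the set $S$ is a clique in $G_1\circledast G_2$ if and only if $S|_{G_1}:=\{v_i:i\in I\}$ is a clique in $G_1$ and $S|_{G_2}:=\{w_i:i\in I\}$ is a clique in $G_2$. Both of these are again partite, indexed by the same ordered set $J:=\{0,\ldots,n\}\setminus I$ inherited from $\{0,\ldots,n\}$, so $X_{S|_{G_1}}\circledast X_{S|_{G_2}}$ is a legitimately defined partite product.

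Next I would identify the vertex sets. A vertex $(u,z)$ lies in $X_S$ iff $(u,z)\notin S$ and $S\cup\{(u,z)\}$ is a clique; by the previous paragraph and partiteness this happens iff $(u,z)\in V_1^j\times V_2^j$ for some $j\in J$, with $u$ adjacent in $G_1$ to every $v_i$ and $z$ adjacent in $G_2$ to every $w_i$ — that is, $u\in X_{S|_{G_1}}(0)\cap V_1^j$ and $z\in X_{S|_{G_2}}(0)\cap V_2^j$. This is precisely the $j$-th entry of the vertex tuple of $X_{S|_{G_1}}\circledast X_{S|_{G_2}}$, so the two complexes have the same ordered, partitioned vertex set. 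Then I would check the $1$-skeletons: two such vertices $(u,z)$ (in part $j$) and $(u',z')$ (in part $j'$, $j\neq j'$ in $J$) are joined in $X_S$ iff $\{(u,z),(u',z')\}$ extends $S$ to a clique, i.e. $(u,u')\in E_1$ and $(z,z')\in E_2$; and this is exactly the adjacency rule of $\circledast$ applied to the two links, the membership $u,u'\in X_{S|_{G_1}}(0)$, $z,z'\in X_{S|_{G_2}}(0)$ having already been secured. Since a clique complex is determined by its $1$-skeleton — and the right-hand side is a clique complex by the remark $X(G_1)\circledast X(G_2)=X(G_1\circledast G_2)$ — the two sides coincide.

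I do not expect a serious obstacle; the only point requiring care is the bookkeeping of the partite structure, namely that $S|_{G_1}$ and $S|_{G_2}$ carry matching ordered index sets $J$ so that $\circledast$ genuinely applies, together with the consistent reading of ``link'' as the clique complex of the induced subgraph on the joint neighborhood. Everything else is a line-by-line translation of the definitions of $\circledast$, of the link, and of the clique complex.
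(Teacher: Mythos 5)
Your proof is correct and follows essentially the same route as the paper's: unwind the definition of $\circledast$ to show that $(u,z)\in X_S$ iff $u\in X_{S|_{G_1}}$ and $z\in X_{S|_{G_2}}$, and then observe that the edge sets agree by the adjacency rule of the product. The extra bookkeeping you supply about the inherited ordered index set $J$ is a reasonable elaboration of what the paper leaves implicit, but it does not change the argument.
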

\begin{proof}
Let $(u, v)\in V_1^i \times V_2^i$:
$$
(u,v)\in X_S \:\iff\: \forall \: (s_1,s_2)\in S, \{(u,v),(s_1,s_2)\} \in E(G_1\circledast G_2) \: \iff  $$
$$
\forall (s_1,s_2)\in S,\:\: \{{u,s_1}\}\in E_1 \:\wedge\: \{{v,s_2}\}\in E_2 \:\iff\: u\in X_{S|_{G_1}} \wedge v\in X_{S|_{G_2}}
$$
And if the vertices sets are equal, by the definition of the product the edges sets are equal as well.
\end{proof}

\subsection{Symmetrization}
\label{sub:sym}
Given a pure, $(n+1)$-partite, $n$-dimensional strongly gallery connected clique complex, we want to use our product to build from it a strongly gallery connected hyper-regular graph. In order for our construction to work, the graph needs to have an additional property.
Let $I= \{0,1,\ldots,n\}$ and let $G= (V= (V^i)_{i\in I}, E)$ be an $(n+1)$-partite clique complex; i.e., for every $i\in I$, $V^i$ is an independent set and for every $j\neq i$, $V^i \cap V^j=\emptyset$. Let $J \subset I$. We say that a clique $C\subset V$ is of type $J$ if $|C| = |J|$ and for every $j\in J$ there is (unique) $v\in C$ such that $v\in V^j$. 

\begin{definition}[Partite type-regular] We say that $G$ is $(d_J)_{J\subsetneq I}$-partite type-regular, where $d_J \in {\bbN}^{I\setminus J}$, if for every $i\in I\setminus J$, the intersection between $V_i$ and the link of each J-clique is of size $d_J(i)$.
\end{definition}

Let $G = (V, E)$ be $(n+1)$-partite graph, with an order on the parts: $V= (V^0, V^1,\ldots, V^n)$, and 
$$E = \bigcup_{0\leq i<j\leq n} E(V^i, V^j)$$ 
Where $E(V^i, V^j)$ denotes the edges between the i-th and the j-th side. Let $S_{n+1}$ be the permutation group on $n+1$ elements.  For $\pi \in S_{n+1}$ define
$$
\pi(G) = \Big((V^{\pi(0)}, V^{\pi{(1)}},\ldots, V^{\pi(n)}),\: \bigcup_{0\leq i<j\leq n} E(V^{\pi{(i)}}, V^{\pi{(j)}}) \Big)
$$
Note that the action merely permutes the labels of the sides; i.e., as graphs $G$ and $\pi(G)$ are the same, but as {\em ordered} multipartite graphs they differ. The order of the sides plays an important role in the $\circledast$ product. We can now describe the symmetrization.
\begin{definition}
Let $G$ be an $(n+1)$-partite graph. Define the symmetrization of $G$ as:
$$
G^{\circledast S_{n+1}} := \mathop{\bigcircledast}_{\pi \in S_{n+1}} \pi(G) 
$$
\end{definition}

\begin{claim}
\label{cla:ptr}
If $G$ is $(n+1)$-partite type-regular, then $G^{\circledast S_{n+1}}$ is hyper-regular.
\end{claim}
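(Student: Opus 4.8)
The plan is to reduce the statement to Observation~\ref{obs:reg}: it suffices to show that for each $i$ every $i$-clique of $H:=G^{\circledast S_{n+1}}$ has a link with the \emph{same} number of vertices, a number depending only on $i$. Note that $H$ is the clique complex of an $(n+1)$-partite graph, being a $\circledast$-product of the $(n+1)$-partite graphs $\pi(G)$, $\pi\in S_{n+1}$, with $k$-th part $W^k=\prod_{\pi\in S_{n+1}}V^{\pi(k)}$; hence every clique of $H$ meets each part at most once, so an $i$-clique $S$ determines the set $T=T(S)\subseteq I$, $|T|=i$, of parts it meets, and I will call $S$ a clique of type $T$.

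First I would record the iterated form of Claim~\ref{claim:linkprod}: since $\circledast$ is associative (it tensors the adjacency blocks and takes Cartesian products of the parts), for a clique $S$ of $H$ one has $X_S=\bigcircledast_{\pi\in S_{n+1}}X_{S|_{\pi(G)}}$, where $S|_{\pi(G)}$ is the projection of $S$ onto the $\pi(G)$-coordinate. Next I would check that $S|_{\pi(G)}$ is a clique of $G$ of type $\pi(T)$ in $G$'s original labelling: a vertex of $S$ lying in $W^k$ has its $\pi$-coordinate in $V^{\pi(k)}$, and the product's edge rule forces all these coordinates to be pairwise adjacent in $\pi(G)=G$. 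Reading the link $X_{S|_{\pi(G)}}$ inside $\pi(G)$ — whose $k$-th part is $V^{\pi(k)}$ — partite type-regularity with $J=\pi(T)$ then gives that its $k$-th part has size exactly $d_{\pi(T)}\bigl(\pi(k)\bigr)$ for every $k\in I\setminus T$. Since $\circledast$ multiplies the sizes of the parts, the $k$-th part of $X_S$ has size $\prod_{\pi\in S_{n+1}}d_{\pi(T)}\bigl(\pi(k)\bigr)$, whence
$$
\abs{X_S(0)}\;=\;\sum_{k\in I\setminus T}\ \prod_{\pi\in S_{n+1}}d_{\pi(T)}\bigl(\pi(k)\bigr).
$$
This already shows the link size is the same for all cliques of a fixed type $T$.

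The last step — and the whole point of symmetrizing over all of $S_{n+1}$ — is that the right-hand side depends only on $|T|$. Given two $i$-subsets $T,T'$ of $I$, choose $\rho\in S_{n+1}$ with $\rho(T)=T'$; substituting $k=\rho(k')$ (a bijection $I\setminus T\to I\setminus T'$) and then $\sigma=\pi\rho$ (a bijection of $S_{n+1}$) turns $\sum_{k\in I\setminus T'}\prod_{\pi}d_{\pi(T')}(\pi(k))$ into $\sum_{k'\in I\setminus T}\prod_{\sigma}d_{\sigma(T)}(\sigma(k'))$, i.e.\ the value attached to $T$. Hence the link of every $i$-clique of $H$ has the same number of vertices, and by Observation~\ref{obs:reg} this is precisely the statement that $H$ is hyper-regular. (The strictness of the chain of degrees, and positivity, come for free under the natural standing assumption — e.g.\ $G$ pure of full dimension — that the $d_J$ are positive: the link of an $(i{+}1)$-clique $C\cup\{v\}$ is the link of $v$ inside the link of $C$, hence a strictly smaller nonempty vertex set.)

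The main obstacle is bookkeeping rather than any deep idea: one must keep straight which labelling of the parts is in force — $G$'s original labels versus $\pi(G)$'s relabelled ones — when applying type-regularity, so that the type of $S|_{\pi(G)}$ is correctly read off as $\pi(T)$ and the relevant slot of its link as the $\pi(k)$-part. Once that is pinned down, the closed form for $\abs{X_S(0)}$ and the $S_{n+1}$-reindexing that makes it depend only on $|T|$ are routine; and it is exactly this reindexing that fails for a single copy of $G$, where $\sum_{k\notin T}d_T(k)$ genuinely depends on $T$, and is repaired precisely by taking the product over the full symmetric group.
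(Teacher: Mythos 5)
Your proposal is correct and follows essentially the same route as the paper: compute the link size of a type-$T$ clique in the symmetrized product as $\sum_{k\in I\setminus T}\prod_{\pi\in S_{n+1}}d_{\pi(T)}(\pi(k))$ (via Claim~\ref{claim:linkprod} and partite type-regularity), then reindex by a permutation carrying $T$ to $T'$ to see the quantity depends only on $|T|$, and conclude by Observation~\ref{obs:reg}. The paper's proof is just a terser version of the same argument (it states the link-size formula without spelling out the projection bookkeeping you carefully include).
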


\begin{proof}
Let $I=\{0,1,\ldots,n\}$, and let $S_I$ denote the permutation group on the elements in $I$. Let $1\leq m\leq n$, $J,L\subset I$ where $|J|=|L|=m$. The link of every $J$-clique in $G^{\circledast S_{I}}$ is of size
$$
\sum_{i\in I\setminus J} \prod_{\pi\in S_I} d_{\pi^{-1}(J)}(\pi^{-1}(i)) = \sum_{i\in I\setminus J} \prod_{\pi\in S_I} d_{\pi(J)}(\pi(i))
$$
Let $\gamma \in S_I$ such that $\gamma(J)=L$ (and it must be that $\gamma(I\setminus J)=I\setminus L$)
$$
\sum_{i\in I\setminus J} \prod_{\pi\in S_I} d_{\pi(J)}(\pi(i)) = \sum_{i\in I\setminus J} \prod_{\pi\circ \gamma \in S_I\cdot \gamma} d_{\pi\circ \gamma(J)}(\pi\circ \gamma (i)) = \sum_{i\in I\setminus L} \prod_{\pi\in S_I} d_{\pi(L)}(\pi(i))
$$
I.e., for every $1\leq m \leq n$, all cliques of size $m$ have the same link size. Due to observation \ref{obs:reg}, the symmetrization of $G$ is hyper regular.
\end{proof}

\subsection{Spectral expander}
\label{sub:spectral}
Here we prove that the $\circledast$ product on pure, strongly gallery connected partite type-regular graphs preserves their expansion properties in a sense.
\begin{definition}[Transition matrix]
Given a graph $G=(V,E)$ with a probability distribution P on E (where $P(e)\neq 0$ for all $e\in E$), we define the transition matrix $T_G$ on $G$ by
$$
{T_G}{(v,u)} = \Pr(u|v) = \frac{P(\{u,v\})}{\sum_{\{w,v\} \in E} P(\{w,v\})}
$$
\end{definition}

Usually we define a uniform distribution over the edges. When $G$ is $d$-regular, the transition matrix is simply $T_G = \frac{1}{d}A_G$ where $A_G$ is the adjacency matrix. Let $G$ be a bipartite graph. If $G$ is biregular (with degrees $d,k$) its adjacency matrix and transition matrix (w.r.t the uniform distribution over the edges) are of the form
$$A_G= 
\left[
\begin{array}{cc}
0 & B \\[0.4cm]
B^T & 0
\end{array}
\right] \:\:\:\:\:\:
T_G= \left[
\begin{array}{cc}
0 & \frac{1}{k}B \\[0.5cm]
\frac{1}{d}B^T & 0
\end{array}
\right]
$$

For \textbf{hypergraphs}, it's a bit more complicated:\\
Let X be a pure n-dimensional finite simplicial complex. For every $-1\leq k \leq n-2$, $\tau \in X(k)$, the probability distribution of the edges of the one skeleton of $X_{\tau}$ corresponds to the high dimensional structure of X. Explicitly, given $\tau \in X(k)$ and $\{u,v\} \in X_\tau(1)$, the probability of $\{u,v\}$ is proportional to the weight function
$$
w(\{u,v\}) = (n-k-2)!\abs{\{\pi \in X_\tau(n-k-1) :\: \{u,v\}\subset \pi\}}
$$

\begin{note}
For $1$-dimensional links (i.e., $k=n-2$), the weight function coincides with the uniform distribution over the edges.
\end{note}

Let $G$ be a graph and let $A_G, T_G$ be its adjacency and transition matrices, respectively. For a matrix $M$, We denote its i-th largest eigenvalue (including repetitions) by $\lambda_i(M)$, and its spectrum by $\sigma(M)$. In particular, we refer to $\lambda_2(T_G)$ as the second largest eigenvalue of the graph $G$, denoted by $\lambda_2(G)$.

\begin{definition}[Spectral expander]
A graph $G$ is a $\lambda$ - one sided expander if $\lambda \geq \lambda_2(G)$. 
\end{definition}

\begin{definition}\label{hdx}[$\lambda$ - high dimensional expander] Let $\lambda < 1$. A d-dimensional pure simplicial complex $X$ is a $\lambda$ - one sided high dimensional expander if:
\begin{enumerate}
    \item The 1-skeleton of $X$ is a one-sided $\lambda$-spectral expander.
    \item For any $i\leq d-2$ and all $s\in X(i)$, the 1-skeleton of $X_s$ is a one-sided $\lambda$-spectral expander.
\end{enumerate}
\end{definition}
In \cite{opp17}, Oppenheim proves the following:
\begin{lemma}[Trickling-Down Theorem]
\label{lem:trick}
Let $X$ be a $d$-dimensional simplicial complex such that the $1$-skeleton of every link (including the entire simplicial complex) is connected and $\forall s \in X(d-2)$, $X_s$ is a one-sided $\lambda$-expander. Then $X$ is a $\mu$-expander where $\mu = \frac{\lambda}{1 - (d-1)\lambda}$
\end{lemma}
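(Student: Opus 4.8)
The plan is to prove the statement by induction on the dimension $d$, after isolating a ``single step'' that relates the spectral gap of a complex to that of its vertex‑links. First I would establish the following one‑step lemma: if $Y$ is a pure simplicial complex with $\dim Y\geq 2$ whose $1$‑skeleton is connected and such that for every vertex $w$ the link $Y_w$ is connected with $\lambda_2(Y_w)\leq\lambda$, then $\lambda_2(Y)\leq\lambda/(1-\lambda)$ (assuming $\lambda<1$). Granting this, the theorem follows: the vertex‑links $X_v$ of a $d$‑dimensional $X$ as in the statement are $(d-1)$‑dimensional and again satisfy the hypotheses of the theorem — every face $s'\in X_v(d-3)$ has $(X_v)_{s'}=X_{s'\cup\{v\}}$ with $s'\cup\{v\}\in X(d-2)$, so $(X_v)_{s'}$ is a $\lambda$‑expander, and all required connectivity is inherited — whence by the inductive hypothesis $\lambda_2(X_v)\leq \lambda/(1-(d-2)\lambda)=:\lambda'$ for every $v$. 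Feeding $\lambda'$ into the one‑step lemma gives $\lambda_2(X)\leq \lambda'/(1-\lambda')=\lambda/(1-(d-1)\lambda)$, since the M\"obius map $t\mapsto t/(1-t)$ satisfies $f^{\circ k}(\lambda)=\lambda/(1-k\lambda)$. The base case $d=1$ is vacuous: there $X(d-2)=X(-1)=\{\emptyset\}$, $X_\emptyset=X$, and $\mu=\lambda/(1-0)=\lambda$, which is exactly the hypothesis.

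The heart of the matter is the one‑step lemma, which I would prove by Garland's local‑to‑global (``localization'') method. Let $M=T_Y$ be the weighted transition operator of the $1$‑skeleton of $Y$, and for a vertex $w$ let $M_w=T_{Y_w}$ be the analogous operator on the link, all weights being the balanced weights $m(\sigma)\propto(\dim Y+1-|\sigma|)!\cdot\#\{\text{top faces}\supseteq\sigma\}$ introduced before the statement. Two bookkeeping facts about these weights do the work: (i) the total vertex‑mass of $Y_w$ equals $m(w)$, so that $(Mf)(w)$ is exactly the $Y_w$‑average of $f$ over the neighbours of $w$, i.e. $(Mf)(w)=\mathbb{E}_{Y_w}f_w$ where $f_w:=f|_{Y_w(0)}$; and (ii) summing the local quadratic forms over vertices reproduces the global one, in the clean form $\langle(M+\mathrm{Id})f,f\rangle=\mathbb{E}_{w}\langle(M_w+\mathrm{Id})f_w,f_w\rangle_{Y_w}$ for all $f$, the expectation taken with respect to the vertex‑weights (this uses that every edge of $Y$ lies in a triangle, valid since $Y$ is pure and $\dim Y\geq 2$). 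Now take $f$ to be an eigenfunction of $M$ for $\lambda_0:=\lambda_2(Y)$ with $f\perp\mathbf 1$; by (i), $\mathbb{E}_{Y_w}f_w=\lambda_0 f(w)$. On each link write $f_w=\lambda_0 f(w)\mathbf 1+g_w$ with $g_w\perp\mathbf 1$ in $Y_w$; since $M_w$ is self‑adjoint and fixes $\mathbf 1$, the cross terms vanish and $\langle(M_w+\mathrm{Id})f_w,f_w\rangle_{Y_w}=2\lambda_0^2 f(w)^2+\langle(M_w+\mathrm{Id})g_w,g_w\rangle_{Y_w}\leq 2\lambda_0^2 f(w)^2+(1+\lambda)\|g_w\|^2$, using that $M_w+\mathrm{Id}$ is positive semidefinite and is $\preceq(1+\lambda)\mathrm{Id}$ on $\mathbf 1^{\perp}$. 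As $\|g_w\|^2=\|f_w\|^2-\lambda_0^2 f(w)^2$, averaging over $w$ through (ii) and using $\mathbb{E}_w\|f_w\|^2=\|f\|^2=\mathbb{E}_w f(w)^2$ yields $(1+\lambda_0)\|f\|^2\leq(1+\lambda)\|f\|^2+(1-\lambda)\lambda_0^2\|f\|^2$. This rearranges to $\lambda_0(1-\lambda_0)\leq\lambda(1-\lambda_0)(1+\lambda_0)$; connectivity of the $1$‑skeleton forces $\lambda_0<1$, so we cancel $1-\lambda_0>0$ to get $\lambda_0\leq\lambda(1+\lambda_0)$, and then $\lambda<1$ lets us solve for $\lambda_0\leq\lambda/(1-\lambda)$. (A negative $\lambda_0$ needs no argument, the bound being automatic.)

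I expect the main obstacle to be not this spectral manipulation but setting up the weighted framework so that (i) and (ii) hold on the nose: one must fix the balanced weight function precisely, verify its ``harmonicity'' $\sum_{v}m(\sigma\cup\{v\})=(\mathrm{codim}\,\sigma)\,m(\sigma)$, check that it restricts correctly to links, and carefully track the normalizations that promote the raw summation identity to the probabilistic statement (ii). A secondary point is the bookkeeping of the induction: confirming that passing to a vertex‑link preserves purity, connectivity of all links, and the codimension‑$2$ spectral hypothesis, with the dimension dropping by exactly one, and treating the degenerate cases ($d=1$, and ensuring $\dim Y\geq 2$ whenever the one‑step lemma is invoked, as well as $(d-1)\lambda<1$ so that the intermediate expanders and the final $\mu$ stay in range). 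Once these are in place, the identity $f^{\circ k}(\lambda)=\lambda/(1-k\lambda)$ is a one‑line check and closes the proof.
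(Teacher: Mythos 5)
The paper does not prove this lemma; it imports it verbatim from Oppenheim \cite{opp17}, and your argument is essentially a faithful reconstruction of Oppenheim's own proof: the Garland-type localization identity relating $\langle Mf,f\rangle$ to the averaged link forms $\langle M_wf_w,f_w\rangle$, the decomposition $f_w=\lambda_0 f(w)\mathbf 1+g_w$, the resulting inequality $\lambda_0\leq\lambda(1+\lambda_0)$, and iteration of $t\mapsto t/(1-t)$ down the links. The bookkeeping you flag (harmonicity of the balanced weights, their restriction to links, purity so every edge lies in a top face, and $(d-1)\lambda<1$) is exactly what the cited source verifies, so your plan is correct and takes the same route as the paper's source.
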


Note that in $r$-partite graphs, every $1$-dimensional link (of a $(r-2)$-clique) is bipartite. It follows from claim \ref{claim:linkprod} that every $1$-dimensional link in a partite graph product is a bipartite product of $1$-dimensional links of the original graphs. Now thanks to the Trickling-Down Theorem, it's enough for us to bound the second eigenvalue of a bipartite product. Useful facts regarding graph spectrum:
\begin{itemize}
    \item For any graph $G$ and transition matrix $T$, all eigenvalues $\lambda$ are $-1\leq \lambda \leq 1$. 
    \item $1$ is always an eigenvalue (of the all-one eigenvector). For any connected graph $G$ (excluding isolated vertices), the largest eigenvalue of the \textbf{adjacency} matrix is unique. 
    \item The spectrum of the adjacency matrix of every bipartite graph is symmetric w.r.t $0$.
\end{itemize}

\begin{proposition}
\label{prop:bip}
Bipartite product of connected biregular graphs preserves the expansion.\\ i.e., for $G_1, G_2$ connected biregular graphs: $$\lambda_2(G_1 \circledast G_2) = \max{\{\lambda_2(G_1), \lambda_2(G_2)\}}$$.
\end{proposition}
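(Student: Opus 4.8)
The plan is to compute the spectrum of $T_{G_1 \circledast G_2}$ explicitly in terms of the spectra of $T_{G_1}$ and $T_{G_2}$, using the Kronecker/tensor structure of the bipartite product exhibited in the $3$-partite example above. Write $G_1$ as a biregular bipartite graph with degrees $(d_1, k_1)$ and bipartite adjacency block $B_1$, and similarly $G_2$ with degrees $(d_2, k_2)$ and block $B_2$. By the definition of $\circledast$, the product $G_1 \circledast G_2$ is again bipartite, biregular with degrees $(d_1 d_2, k_1 k_2)$, and its bipartite block is $B_1 \otimes B_2$. Its transition matrix is therefore the block off-diagonal matrix with blocks $\frac{1}{k_1 k_2}(B_1\otimes B_2)$ and $\frac{1}{d_1 d_2}(B_1\otimes B_2)^T$. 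The nonzero eigenvalues of such a block matrix come in pairs $\pm\sqrt{\mu}$ where $\mu$ ranges over the eigenvalues of the product of the two blocks, i.e. of $\frac{1}{d_1 d_2 k_1 k_2}(B_1\otimes B_2)(B_1\otimes B_2)^T = \bigl(\tfrac{1}{d_1 k_1}B_1 B_1^T\bigr)\otimes\bigl(\tfrac{1}{d_2 k_2}B_2 B_2^T\bigr)$.

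Next I would relate the eigenvalues of $\frac{1}{d_i k_i} B_i B_i^T$ to $\lambda_2(G_i)$. For a biregular bipartite graph, squaring the transition matrix shows that the eigenvalues of $T_{G_i}$ are exactly $\pm\sqrt{\nu}$ for $\nu$ an eigenvalue of $\frac{1}{d_i k_i}B_i B_i^T$ (together with possibly extra zeros coming from the size mismatch of the two sides). Hence $\lambda_2(G_i)^2$ is the second-largest eigenvalue of $\frac{1}{d_i k_i}B_i B_i^T$, the largest being $1$ with the all-ones eigenvector (here connectedness of $G_i$ is used, via the third and second bullet points, to guarantee that the eigenvalue $1$ of $B_iB_i^T/(d_ik_i)$ is simple). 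Since the eigenvalues of a Kronecker product are the pairwise products, the eigenvalues of $\bigl(\tfrac{1}{d_1 k_1}B_1 B_1^T\bigr)\otimes\bigl(\tfrac{1}{d_2 k_2}B_2 B_2^T\bigr)$ are $\{\alpha\beta : \alpha \in \sigma(\tfrac{1}{d_1k_1}B_1B_1^T),\ \beta\in\sigma(\tfrac{1}{d_2k_2}B_2B_2^T)\}$, with top value $1\cdot 1 = 1$ and second-largest value $\max\{\lambda_2(G_1)^2\cdot 1,\ 1\cdot\lambda_2(G_2)^2\} = \max\{\lambda_2(G_1)^2, \lambda_2(G_2)^2\}$ (all eigenvalues being nonnegative since they are squares). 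Taking square roots gives $\lambda_2(G_1\circledast G_2) = \max\{\lambda_2(G_1),\lambda_2(G_2)\}$, provided one checks that this value indeed appears as a genuine eigenvalue of $T_{G_1\circledast G_2}$ and is not dislodged by the all-ones eigenvector or by spurious zeros.

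I expect the main obstacle to be the bookkeeping around the two sides of the bipartite graphs having different sizes, so that $B_i$ is rectangular and $B_iB_i^T$ and $B_i^TB_i$ have different numbers of zero eigenvalues; one must be careful that the pairing $\lambda \leftrightarrow -\lambda$ and the correspondence with eigenvalues of $B_iB_i^T$ is stated correctly (the symmetry of the bipartite spectrum about $0$, the second bullet, handles this cleanly), and that the eigenvalue $1$ really is simple so that $\lambda_2$ picks out the intended quantity. A secondary point to verify is that $G_1\circledast G_2$ is connected (so that its own spectral gap is meaningful and $\lambda_2 < 1$); this follows from the connectivity claim proved earlier in \ref{sub:partite} applied in the bipartite case, or directly from the fact that $1$ is a simple eigenvalue of the Kronecker product above. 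Once these bookkeeping issues are settled, the computation is routine linear algebra on Kronecker products.
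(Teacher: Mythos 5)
Your proposal is correct and follows essentially the same route as the paper: both arguments rest on the facts that the bipartite block of $G_1\circledast G_2$ is $B_1\otimes B_2$, that the spectrum of a bipartite transition matrix is determined (up to sign and extra zero eigenvalues) by the singular values of that block --- equivalently, in your formulation, by the eigenvalues of $\frac{1}{dk}BB^T$ --- and that Kronecker products multiply these, with connectedness guaranteeing that the top eigenvalue $1$ is simple so that $\lambda_2$ picks out the intended quantity. The only real difference is organizational: the paper first treats the $d$-regular case via an explicit SVD-based eigendecomposition of the block transition matrix and then extends to biregular graphs through a separate rescaling claim ($\sigma(T_G)=\frac{1}{\sqrt{dk}}\,\sigma(A_G)$) together with padding the smaller side by isolated vertices, whereas you handle the biregular case directly in a single computation with $B_iB_i^T$, which sidesteps exactly the padding bookkeeping you flagged as the main obstacle.
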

\begin{note}
In a partite type-regular graph, in particular, all 1-dimensional links are biregular. 
\end{note}

\begin{proof}
First, we prove this for regular graphs:\\
Let $G_1, G_2$ be connected regular bipartite graphs, with transition matrices
\newline\newline
$$T_1= \left[
\begin{array}{cc}
0 & \frac{1}{d_1}B_1 \\[0.5cm]
\frac{1}{d_1}B_1^T & 0
\end{array}
\right] \:\:\:\: T_2= \left[
\begin{array}{cc}
0 & \frac{1}{d_2}B_2 \\[0.5cm]
\frac{1}{d_2}B_2^T & 0
\end{array}
\right]
$$
\newline\newline
Let 
$$\frac{1}{d_1}B_1\:=\: U_1 \frac{1}{d_1} \Sigma_1 V_1^T; \:\:\:\: \frac{1}{d_2}B_2\:=\: U_2 \frac{1}{d_2}\Sigma_2 V_2^T$$ be their singular value decomposition (henceforth SVD). Now,
\begin{enumerate}
    \item One can verify that the eigenvalue decomposition of $G_i$ is given by
    \newline\newline
    $$T_i= \left[
    \begin{array}{cc}
    \hat{U}_i & \hat{U}_i \\[0.5cm]
    \hat{V}_i & -\hat{V}_i
    \end{array}
    \right]
    \left[
    \begin{array}{cc}
    {\frac{1}{d_i}\Sigma_i} & {0} \\[0.5cm]
    {0} & -{\frac{1}{d_i}\Sigma_i}
    \end{array}
    \right]
    \left[
    \begin{array}{cc}
    \hat{U}_i & \hat{U}_i \\[0.5cm]
    \hat{V}_i & -\hat{V}_i
    \end{array}
    \right]^{T}
    $$
    \newline\newline
    Where $\hat{U}_i=U_i/\sqrt{2}, \hat{V}_i=V_i/\sqrt{2}$ (see section 2.2 in \cite{kun}). I.e., the spectrum of $G_i$ is $ \pm \sigma(\frac{1}{d_i}\Sigma_i)$. 
    \item The SVD of $\frac{1}{d_1}B_1\otimes \frac{1}{d_2}B_2$ is $(U_1 \otimes U_2) (\frac{1}{d_1}\Sigma_1 \otimes \frac{1}{d_2}\Sigma_2) (V_1^T \otimes V_2^T)$ (Kronecker product properties).\\
    This implies that the spectrum of $G_1\circledast G_2$, i.e., of the transition matrix
    \newline\newline
    $$
    T_1 \circledast T_2 = 
    \left[
    \begin{array}{cc}
    0 & \frac{1}{d_1 d_2}B_1\otimes B_2 \\[0.7cm]
    (\frac{1}{d_1 d_2}B_1 \otimes B_2)^T & 0
    \end{array}
    \right]
    $$
    \newline\newline
    is $\pm \sigma(\frac{1}{d_1 d_2}\Sigma_1 \otimes \Sigma_2)$
\end{enumerate}
Note that $\sigma(A_{G_1\circledast G_2}) = \pm \sigma(\Sigma_1 \otimes \Sigma_2)$, i.e. it is the spectrum of the \textbf{adjacency} matrix of $G_1\circledast G_2$.
Since bipartite product of connected bipartite graphs results in a connected graph (following the proof of claim 2.1, every connected bipartite graph is pure), the multiplicity of the largest eigenvalue is one. Since the spectrums are symmetric, the two largest eigenvalues in each adjacency matrix are also the largest in absolute value, so  
$$
\lambda_2(A_{G_1\circledast G_2}) \leq \max\big\{\lambda_1(A_{G_1})\cdot \lambda_2(A_{G_2}), \: \lambda_2(A_{G_1})\cdot \lambda_1(A_{G_2})\big\}
$$
thus,
$$
\lambda_2(G_1\circledast G_2) \leq \max\Big\{\frac{\lambda_1(A_{G_1})}{d_1}\cdot \frac{\lambda_2(A_{G_2})}{d_2}, \: \frac{\lambda_2(A_{G_1})}{d_1}\cdot \frac{\lambda_1(A_{G_2})}{d_2}\Big\} = \max\{\lambda_2(G_2), \lambda_2(G_1)\}
$$
Now, we extend the proof for biregular graphs.

\begin{claim}
Let $G$ be a biregular graph with left side of size $n$ and degree $d$, and right side of size $m$ and degree $k$, whose adjacency matrix and transition matrix are
$$
A_G = \left[
\begin{array}{cc}
0 & B \\[0.4cm]
B^T & 0
\end{array}
\right]
\:\: 
T_G = \left[
\begin{array}{cc}
0 & \frac{1}{k}B \\[0.5cm]
\frac{1}{d}B^T & 0
\end{array}
\right]
$$
It holds that $\sigma (T_G) = \frac{1}{\sqrt{d k}} \cdot \sigma(A_G)$
\end{claim}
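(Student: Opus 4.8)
The plan is to realize $T_G$ as a matrix similar to the rescaled adjacency matrix $\frac{1}{\sqrt{dk}}A_G$, by conjugating with the square root of the degree matrix. Since similar matrices have the same characteristic polynomial and rescaling a matrix rescales all its eigenvalues, this yields exactly the claimed identity $\sigma(T_G)=\frac{1}{\sqrt{dk}}\,\sigma(A_G)$, with multiplicities.

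First I would write $T_G=D^{-1}A_G$, where $D$ is the diagonal matrix of vertex degrees; this is just the definition of the transition matrix for the uniform distribution on edges, each row being normalized by the degree of its vertex. Because $G$ is biregular with positive degrees $d,k$, $D$ is invertible, so its entrywise square root $D^{1/2}$ is well defined and invertible. Conjugating, $D^{1/2}\,T_G\,D^{-1/2}=D^{1/2}D^{-1}A_G\,D^{-1/2}=D^{-1/2}A_G\,D^{-1/2}$, and it remains to identify this last, manifestly symmetric, matrix. Its $(v,u)$ entry equals $(\deg v)^{-1/2}(A_G)_{v,u}(\deg u)^{-1/2}$, which vanishes unless $\{v,u\}$ is an edge; and every edge of the bipartite graph $G$ joins a vertex of degree $d$ to a vertex of degree $k$, so on its support this factor is $(dk)^{-1/2}$. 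Hence $D^{-1/2}A_G\,D^{-1/2}=\frac{1}{\sqrt{dk}}A_G$.

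Consequently $T_G$ is similar to $\frac{1}{\sqrt{dk}}A_G$, so $\sigma(T_G)=\sigma\big(\tfrac{1}{\sqrt{dk}}A_G\big)=\frac{1}{\sqrt{dk}}\,\sigma(A_G)$, as desired. As a byproduct, the computation exhibits $T_G$ as conjugate to a real symmetric matrix, hence diagonalizable with real spectrum; this is precisely what lets the eigenvalue-ordering argument used for regular graphs in the proof of Proposition~\ref{prop:bip} carry over to the biregular case.

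I do not expect a genuine obstacle: this is a one-line similarity once set up correctly. The only bookkeeping points are to normalize rows (so $T_G=D^{-1}A_G$, not $A_GD^{-1}$), to invoke invertibility of $D$ — equivalently, the absence of isolated vertices — and to use the defining feature of biregularity, namely that the two endpoints of every edge have degrees $d$ and $k$.
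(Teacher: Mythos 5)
Your proof is correct and is essentially the paper's own argument in cleaner packaging: the paper sends each eigenvector $v=(v_1,v_2)$ of $A_G$ to $(\sqrt{d}\,v_1,\sqrt{k}\,v_2)=D^{1/2}v$ and then checks separately that multiplicities are preserved, which is exactly your conjugation by $D^{1/2}$ carried out one eigenvector at a time (your similarity formulation gets the multiplicities for free). One small bookkeeping correction: the transition matrix displayed in the claim is $A_GD^{-1}$ rather than $D^{-1}A_G$ as your parenthetical asserts, but since these two matrices are transposes of one another (as $A_G$ is symmetric and $D$ diagonal) and both are similar to $D^{-1/2}A_GD^{-1/2}$, nothing in your argument is affected.
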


\begin{proof}
Let $\lambda \in \sigma(A)$, and let $v = \begin{pmatrix} v_1\\ \hline v_2\end{pmatrix}$ be its eigenvector, where $v_1 \in \bbR^n, v_2 \in \bbR^m$. \\
So, $T \cdot \begin{pmatrix} \sqrt{d} \cdot v_1\\ \hline \sqrt{k}\cdot  v_2\end{pmatrix} =
\begin{pmatrix} \sqrt{k}\cdot \frac{\lambda}{k} v_1\\ \hline \sqrt{d} \cdot  \frac{\lambda}{d} v_2\end{pmatrix} \:=\:
\frac{1}{\sqrt{d k}}\lambda \begin{pmatrix} \sqrt{d}\cdot  v_1\\ \hline \sqrt{k} \cdot  v_2\end{pmatrix}$.\\
And it works similarly in the other direction. In addition, every $\lambda \in \sigma(A_G)$ has the same multiplicity as $\frac{\lambda}{\sqrt{d k}} \in \sigma(T_G) $: It's easy to verify that $\bigg\{\begin{pmatrix} v_1^1\\ \hline v_2^1\end{pmatrix}, \begin{pmatrix} v_1^2\\ \hline v_2^2\end{pmatrix}, \ldots, \begin{pmatrix} v_1^t\\ \hline v_2^t\end{pmatrix}\bigg\}$ is independent if and only if $\bigg\{\begin{pmatrix} \sqrt{d}\cdot v_1^1\\ \hline \sqrt{k}\cdot v_2^1\end{pmatrix}, \begin{pmatrix} \sqrt{d}\cdot v_1^2\\ \hline \sqrt{k}\cdot v_2^2\end{pmatrix}, \ldots, \begin{pmatrix} \sqrt{d}\cdot v_1^t\\ \hline \sqrt{k}\cdot v_2^t\end{pmatrix}\bigg\}$ is.  
\end{proof}

\remove{
Note that biregular graphs with different degrees don't have the same number of vertices in each side. We need equal size sides in order to use the result above, so we pad the smaller side with isolated vertices.

\begin{claim}
Let $G$ be a graph. Adding an isolated vertex to $G$ doesn't change its spectrum besides another 0 as eigenvalue.
\end{claim}

\begin{proof}
Adding an isolated vertex to $G$ means adding an all-zero column and row, both at the same place, $i$. Its easy to verify that if $\lambda$, $v$ were eigenvalue - eigenvector pair before, then $\lambda, v'$ are such pair after the padding, where $v'$ is $v$ padded with 0 between the $i-1$ and the $i^{th}$ entry. The extra eigenvector is $e_i$ (with eigenvalue 0).
\end{proof}
}
In conclusion, Let $G_1, G_2$ be biregular graphs with degrees $d, k$ and $p,q$. in the above notation, their spectrums are $\pm \sigma (\frac{1}{\sqrt{d k}}\Sigma_1), \: \pm\sigma (\frac{1}{\sqrt{p q}}\Sigma_2)$ and their product's spectrum is, as expected,  $\pm \sigma (\frac{1}{\sqrt{d k p q}}\Sigma_1\otimes \Sigma_2)$, and the product is again connected (excluding isolated vertices), thus
$$
\lambda_2(G_1\circledast G_2) \leq \max\Big\{\frac{\lambda_1(A_{G_1})}{\sqrt{d k}}\cdot \frac{\lambda_2(A_{G_2})}{\sqrt{p q}}, \: \frac{\lambda_2(A_{G_1})}{\sqrt{d k}}\cdot \frac{\lambda_1(A_{G_2})}{\sqrt{p q}}\Big\} = \max\{\lambda_2(G_2), \lambda_2(G_1)\}
$$
\end{proof}

\begin{remark}
Note that the above isn't true for $n$-partite products with $n > 2$: In particular, when $n>2$, the product of connected graphs is not necessarily connected.
See Question \ref{sum:1} at the end of this paper.

\end{remark}

\begin{corollary}
If $G_1, G_2$ are both pure d-dimensional, (d+1)-partite strongly gallery connected  clique complexes where all 1-dimensional links are biregular, $\mu_1$ and respectively $\mu_2$
-expanders, where $\mu_1, \mu_2$ are consistent with the Trickling-Down Theorem, then $G_1 \circledast G_2$ is a $\max\{\mu_1, \mu_2\}$-expander.
\end{corollary}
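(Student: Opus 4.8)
The plan is to chain together three results from this section: the description of the $1$-dimensional links of a $\circledast$-product (Claim~\ref{claim:linkprod}), the spectral identity for bipartite products (Proposition~\ref{prop:bip}), and the Trickling-Down Theorem (Lemma~\ref{lem:trick}). First I would unwind the phrase ``$\mu_i$ consistent with the Trickling-Down Theorem'': it means there is $\lambda_i<\frac{1}{d-1}$ such that the $1$-skeleton of the link of every $(d-2)$-face of $G_i$ is a one-sided $\lambda_i$-expander and $\mu_i=\frac{\lambda_i}{1-(d-1)\lambda_i}$. Put $\lambda^{\ast}=\max\{\lambda_1,\lambda_2\}$. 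Since $t\mapsto \frac{t}{1-(d-1)t}$ is increasing on $[0,\frac{1}{d-1})$, we get $\frac{\lambda^{\ast}}{1-(d-1)\lambda^{\ast}}=\max\{\mu_1,\mu_2\}$, so it is enough to show that every link (of dimension $\geq 1$) of $G_1\circledast G_2$ has a $1$-skeleton whose second eigenvalue is at most $\max\{\mu_1,\mu_2\}$. By the first claim of Section~\ref{sub:partite}, $G_1\circledast G_2$ is again pure $d$-dimensional, $(d+1)$-partite, and strongly gallery connected, so all its links are connected and the hypotheses of Lemma~\ref{lem:trick} (applied both to the whole complex and, inductively, to each of its links) are in force. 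Hence it suffices to bound $\lambda_2$ of the $1$-skeleton of the link of every $(d-2)$-face of $G_1\circledast G_2$ by $\lambda^{\ast}$.

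The second step is to analyze a single $(d-2)$-face $\tau$ of $G_1\circledast G_2$. Since the product is $(d+1)$-partite, the $d-1$ vertices of $\tau$ lie in $d-1$ distinct parts, so the projections $\tau|_{G_1}$ and $\tau|_{G_2}$ are $(d-2)$-faces of $G_1$ and $G_2$. By Claim~\ref{claim:linkprod}, $X_\tau = X_{\tau|_{G_1}}\circledast X_{\tau|_{G_2}}$, a bipartite product of two $1$-dimensional links. These two links are connected (strong gallery connectivity of $G_1,G_2$) and biregular (by hypothesis), and a bipartite product of biregular graphs is biregular, so Proposition~\ref{prop:bip} applies and gives
$$\lambda_2(X_\tau)=\max\{\lambda_2(X_{\tau|_{G_1}}),\lambda_2(X_{\tau|_{G_2}})\}\leq\max\{\lambda_1,\lambda_2\}=\lambda^{\ast}.$$
Feeding this into Lemma~\ref{lem:trick} for the whole complex gives a $\frac{\lambda^{\ast}}{1-(d-1)\lambda^{\ast}}$-expanding $1$-skeleton; applying the same lemma to each link $X_\sigma$ of dimension $\geq 1$ (whose own $1$-dimensional links are again $1$-dimensional links of $G_1\circledast G_2$, hence $\lambda^{\ast}$-expanders, and which is connected) shows the $1$-skeleton of $X_\sigma$ is a $\frac{\lambda^{\ast}}{1-(\dim X_\sigma-1)\lambda^{\ast}}$-expander, which is at most $\max\{\mu_1,\mu_2\}$. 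Thus $G_1\circledast G_2$ is a $\max\{\mu_1,\mu_2\}$-high dimensional expander.

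The point I expect to require the most care — the ``hard part'' — is the bookkeeping around the transition-matrix conventions. The transition matrix of a $1$-dimensional link is taken with respect to the uniform distribution on its edges (the Note following the weight function), and Proposition~\ref{prop:bip} is stated for that convention, so I must check that the uniform-edge transition matrix of $X_{\tau|_{G_1}}\circledast X_{\tau|_{G_2}}$ coincides with $T_{X_{\tau|_{G_1}}}\circledast T_{X_{\tau|_{G_2}}}$. This holds because the $\circledast$-product of biregular graphs with left/right degrees $(d_1,k_1)$ and $(d_2,k_2)$ is biregular with degrees $(d_1 d_2, k_1 k_2)$, and then the off-diagonal block $\frac{1}{d_1 d_2}B_1\otimes B_2$ is exactly the uniform transition block; so the spectral identity of Proposition~\ref{prop:bip} transfers without change. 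Everything else — monotonicity of the trickling-down function, purity and strong gallery connectivity of the product, and the fact that the projection of a clique is a clique — is routine and already available from earlier in the section.
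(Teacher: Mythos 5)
Your proposal is correct and follows exactly the route the paper intends: the paper leaves this corollary unproved because it is the direct combination of Claim~\ref{claim:linkprod} (every $1$-dimensional link of the product is a bipartite product of $1$-dimensional links of the factors), Proposition~\ref{prop:bip} (such a product has $\lambda_2$ equal to the max of the factors' $\lambda_2$), and the Trickling-Down Theorem, which is precisely the chain you assemble. Your added care about the monotonicity of $t\mapsto \frac{t}{1-(d-1)t}$ and the transition-matrix conventions for biregular products is sound and only makes explicit what the paper takes for granted.
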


\section{Subgroup geometry system}
\label{sec:sgs}
So far, we have seen that pure, strongly gallery connected partite type-regular graphs can be fully regularized by our symmetrization while preserving strong gallery connectivity and some expansion property of the original graph. In this section, we show that clique complexes arising from subgroup geometry systems are pure, strongly gallery connected partite type-regular, and thus can be regularized.
We state here relevant definitions and theorems from Kaufman and Oppenheim's work. For further reading and deeper understanding, see Kaufman and Oppenheim \cite{kaufman2017simplicial} and chapter 1 in Diagram Geometry \cite{BC13}, about incidence geometry.

\begin{definition}[Coset complex]
Given a group $G$ with subgroups $K_{\{i\}}$, $i\in I$, where $I$ is finite, the coset complex $X= X\big(G,(K_{\{i\}}
)_{i\in I}\big)$ is a simplicial complex defined as follows:
\begin{enumerate}
    \item The vertex set of $X$ is the set of cosets of the subgroups, i.e., $X(0) = \bigcup_{i\in I}\{g K_{\{i\}} : g\in G\}$.
    \item Two vertices $g K_{\{i\}}, g' K_{\{j\}}$ form an edge, i.e., $\{g K_{\{i\}}, g' K_{\{j\}}\} \in X(1)$ if $i\neq j$ and $g K_{\{i\}} \cap g' K_{\{j\}} \neq \emptyset$.
    \item The simplicial complex $X$ is the clique complex spanned by the $1$-skeleton defined above, i.e., $\{g_0 K_{\{i_0\}}, \ldots, g_k K_{\{i_k\}}\} \in X(k)$ if for every $0\leq j, j'\leq k$, $\:\: g_{j} K_{\{i_j\}} \cap g_{j'} K_{\{i_{j'}\}} \neq \emptyset$.
\end{enumerate}
\end{definition}
We denote for every $\tau \subset I, \tau\neq \emptyset$, $K_{\tau} := \bigcap_{i\in \tau} K_{\{i\}}$, and further denote $K_{\emptyset} := G$.

\begin{definition}[Subgroup geometry system]
\label{def:sgs}
Let $n\in \bbN, I$ finite set of cardinality $n+1$ and $G$ be a group with subgroups $K_{\{i\}}$, $i\in I$. We call $\big( G, (K_{\{i\}})_{i\in I}\big)$ a subgroup geometry system if:
\begin{itemize}
    \item (A1) For every $\tau, \tau' \subset I$, $K_{\tau\cap \tau'} = \langle  K_{\tau}, K_{\tau'}\rangle$, where $\langle  K_{\tau}, K_{\tau'}\rangle$ denotes the subgroup of $G$ generated by $K_{\tau}, K_{\tau'}$.
    \item (A2) For every $\tau \subsetneq I$ and $i\in I\setminus \tau$, $K_{\tau} K_{\{i\}} = \bigcap_{j\in \tau} K_{\{j\}} K_{\{i\}}$.
    \item (A3) For every $i\in I$, $K_{I} \neq K_{I\setminus \{i\}}$.
\end{itemize}
\end{definition}
We'll need the following lemmas, stated and proved in \cite{kaufman2017simplicial}:
\begin{lemma}
Let $n \in \bbN, I$ a finite set of cardinality $n+1$, and $G$ be group with subgroups $K_{\{i\}}, i \in I$.
If $\big(G,(K_{\{i\}}
)_{i\in I}
\big)$ is a subgroup geometry system, then
\begin{enumerate}
\item $X = X\big(G,(K_{\{i\}}
)_{i\in I}
\big)$
is a pure $n$-dimensional, $(n + 1)$-partite, strongly gallery connected clique complex. Furthermore, for every $\sigma, \sigma' \in X(n)$ there is $g \in G$ such that $g.\sigma = \sigma'$.
\item As a consequence, for $\tau \subsetneq I$, every clique of type $\tau$ is of the form $g K_{\tau}$ for some $g\in G$. More precisely, for every clique  $\{g_0 K_{\{i_0\}}, \ldots, g_k K_{\{i_k\}}\} \in X(k)$ there exists $g\in G$ such that  $\{g_0 K_{\{i_0\}}, \ldots, g_k K_{\{i_k\}}\} = \{g K_{\{i_0\}}, \ldots, g K_{\{i_k\}}\}$ 
\item Given $g \in G, \tau \subsetneq I$, the link of $g K_{\tau}$ in $X\big(G,(K_{\{i\}}
)_{i\in I}\big)$ is isomorphic to $X\big(K_{\tau} ,(K_{\tau \cup \{i\}}
)_{i\in I\setminus \tau}\big) $.
\end{enumerate}
\end{lemma}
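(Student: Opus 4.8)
The plan is to isolate one combinatorial core and build everything on it: a Helly-type statement saying that if $g_0K_{\{i_0\}},\ldots,g_kK_{\{i_k\}}$ have pairwise distinct labels and pairwise nonempty intersections, then $\bigcap_\ell g_\ell K_{\{i_\ell\}}\neq\emptyset$. This is precisely what (A2) is designed to give, and I would prove it by induction on $k$. Using that left multiplication by a group element is a simplicial automorphism of the coset complex, I may translate so that the common point of the first $k$ cosets (which exists by the inductive hypothesis applied to $g_0K_{\{i_0\}},\ldots,g_{k-1}K_{\{i_{k-1}\}}$) is the identity; then $g_\ell K_{\{i_\ell\}}=K_{\{i_\ell\}}$ for $\ell<k$ and $\bigcap_{\ell<k}g_\ell K_{\{i_\ell\}}=K_\tau$ with $\tau=\{i_0,\ldots,i_{k-1}\}$. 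The pairwise condition with the last coset gives $g_k\in K_{\{i_\ell\}}K_{\{i_k\}}$ for each $\ell<k$, hence $g_k\in\bigcap_{j\in\tau}K_{\{j\}}K_{\{i_k\}}=K_\tau K_{\{i_k\}}$ by (A2); writing $g_k=ab$ with $a\in K_\tau$, $b\in K_{\{i_k\}}$ exhibits $a\in K_\tau\cap g_kK_{\{i_k\}}$ as the desired common point. With this in hand, item 2 is immediate: a clique of type $\tau$ has a common point $g$, so it equals $\{gK_{\{i\}}\}_{i\in\tau}$ and the intersection of its vertices is $gK_\tau$. Purity follows because $\{gK_{\{i\}}\}_{i\in\tau}\cup\{gK_{\{j\}}\}$ is again a clique (all these cosets contain $g$) for any $j\in I\setminus\tau$, so any face extends to one of type $I$, which has dimension $n$. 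Axiom (A3) enters here only for non-degeneracy: it forces $K_{\{i\}}\neq K_{\{j\}}$ whenever $i\neq j$ (otherwise $K_I=K_{I\setminus\{j\}}$), so distinct labels give distinct vertices and $X$ is genuinely $(n+1)$-partite and $n$-dimensional.

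Transitivity on $X(n)$ is then elementary: an $n$-face is $\{gK_{\{i\}}\}_{i\in I}$ for some $g$, and left multiplication by $g'g^{-1}$ carries it to $\{g'K_{\{i\}}\}_{i\in I}$. For item 3 I again invoke (A2): after translating it suffices to identify the link of $K_\tau$. A vertex adjacent to all of $\{K_{\{j\}}:j\in\tau\}$ must carry a label $i\in I\setminus\tau$ (two cosets with the same label are never adjacent) and has the form $hK_{\{i\}}$ with $hK_{\{i\}}\cap K_{\{j\}}\neq\emptyset$ for every $j\in\tau$, i.e., $h\in\bigcap_{j\in\tau}K_{\{j\}}K_{\{i\}}=K_\tau K_{\{i\}}$ by (A2); thus the link's vertex set is exactly $\{hK_{\{i\}}:h\in K_\tau,\ i\in I\setminus\tau\}$. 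The assignment $hK_{\tau\cup\{i\}}\mapsto hK_{\{i\}}$ (for $h\in K_\tau$) is well defined and bijective onto this set using $K_{\tau\cup\{i\}}=K_\tau\cap K_{\{i\}}$, and a direct check that it preserves adjacency in both directions yields $X_{gK_\tau}\cong X\big(K_\tau,(K_{\tau\cup\{i\}})_{i\in I\setminus\tau}\big)$.

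Connectivity and strong gallery connectivity come from (A1). Taking $\tau=\{i\}$, $\tau'=\{j\}$ in (A1) gives $G=\langle K_{\{i\}},K_{\{j\}}\rangle$ for every $i\neq j$. Fixing such a pair, the $1$-skeleton is connected: for $s\in K_{\{i\}}\cup K_{\{j\}}$ the vertex $sK_{\{i\}}$ is connected to $K_{\{i\}}$ (it equals $K_{\{i\}}$ if $s\in K_{\{i\}}$, and otherwise $sK_{\{i\}}$, $sK_{\{j\}}=K_{\{j\}}$, $K_{\{i\}}$ is a path of length two); writing an arbitrary $g$ as a word in $K_{\{i\}}\cup K_{\{j\}}$ and translating this short path by successive prefixes of the word connects $gK_{\{i\}}$ to $K_{\{i\}}$, and any $gK_{\{k\}}$ is adjacent to $gK_{\{i\}}$. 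For strong gallery connectivity, a link of dimension $\geq 1$ is $X_{gK_\tau}$ with $|I\setminus\tau|\geq 2$, which by item 3 is the coset complex of $\big(K_\tau,(K_{\tau\cup\{i\}})_{i\in I\setminus\tau}\big)$; applying (A1) to $\tau\cup\{i\}$ and $\tau\cup\{j\}$ gives $K_\tau=\langle K_{\tau\cup\{i\}},K_{\tau\cup\{j\}}\rangle$, so the same argument shows this link is connected.

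The step I expect to be the main obstacle is the Helly property: setting up the induction cleanly — the translation normalization, and pinning down exactly which pairwise incidences to feed into (A2) — is the one place the axioms are used in a genuinely non-formal way; purity, transitivity, the link identification, and the connectivity paths are all bookkeeping on top of it. A secondary subtlety is verifying in item 3 that the candidate isomorphism preserves \emph{non}-adjacency as well as adjacency; this is exactly where the $(n+1)$-partite structure of $X$ does the work, so it should be recorded explicitly rather than waved through.
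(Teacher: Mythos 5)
Your proposal is essentially correct, and it is worth noting that the paper does not prove this lemma at all: it is imported from Kaufman--Oppenheim \cite{kaufman2017simplicial}, and your argument reconstructs the standard proof from that source (and from Buekenhout--Cohen's treatment of coset geometries). The Helly-type intersection property is indeed the combinatorial core, your induction for it via (A2) is correct, and the deductions of item 2, purity, transitivity on $X(n)$, the use of (A3) for non-degeneracy, and the connectivity arguments from (A1) all go through as you describe. The one place where your stated justification misidentifies the mechanism is the forward direction of adjacency preservation in item 3: given $h,h'\in K_\tau$ with $hK_{\{i\}}\cap h'K_{\{i'\}}\neq\emptyset$, you must exhibit a point of $hK_{\tau\cup\{i\}}\cap h'K_{\tau\cup\{i'\}}$, and this is not delivered by the partite structure (which only handles non-adjacency of same-type vertices). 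It is another application of your Helly lemma: the set $\{K_{\{j\}}\}_{j\in\tau}\cup\{hK_{\{i\}},h'K_{\{i'\}}\}$ is a clique, hence has a common point $c$; then $c\in K_\tau$ and $h^{-1}c\in K_\tau\cap K_{\{i\}}=K_{\tau\cup\{i\}}$ give $c\in hK_{\tau\cup\{i\}}$, and likewise $c\in h'K_{\tau\cup\{i'\}}$, so the two image cosets in $X\bigl(K_\tau,(K_{\tau\cup\{i\}})_{i\in I\setminus\tau}\bigr)$ meet. With that step made explicit the proof is complete.
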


\begin{theorem}
Every subgroup geometry system is partite type-regular.
\end{theorem}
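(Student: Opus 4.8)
The plan is to derive the statement almost directly from the structural Lemma of Kaufman--Oppenheim quoted just above (in particular its parts 2 and 3). Fix a subgroup geometry system $\big(G,(K_{\{i\}})_{i\in I}\big)$ with $\abs I=n+1$, write $X=X\big(G,(K_{\{i\}})_{i\in I}\big)$, and recall from part 1 of the Lemma that $X$ is a pure $n$-dimensional, $(n+1)$-partite clique complex, with $X(0)$ partitioned as $\bigcup_{i\in I}V^i$, where $V^i=\{gK_{\{i\}}:g\in G\}$. For each $J\subsetneq I$ I want to produce a single tuple $d_J\in\bbN^{\abs{I\setminus J}}$ such that for every clique $C$ of type $J$ and every $i\in I\setminus J$ one has $\abs{V^i\cap X_C(0)}=d_J(i)$; this is precisely partite type-regularity. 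Note first that $X_C$ can only meet the parts $V^i$ with $i\in I\setminus J$: since each $V^i$ is an independent set, no clique extending $C$ can contain a second vertex of a part already used by $C$.

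Now take any clique $C$ of type $J$. By part 2 of the Lemma, $C=\{gK_{\{j\}}:j\in J\}$ for some $g\in G$, i.e. $C=gK_J$ in the shorthand of the Lemma. By part 3, the link $X_C$ is isomorphic to the coset complex $X\big(K_J,(K_{J\cup\{i\}})_{i\in I\setminus J}\big)$, and this isomorphism is compatible with the partitions, sending the part indexed by $i\in I\setminus J$ in $X_C$ to the part indexed by $i$ in the target. (Concretely, translating by $g^{-1}$ reduces to the base clique $\{K_{\{j\}}:j\in J\}$; a link vertex is then a coset $g'K_{\{i\}}$ meeting every $K_{\{j\}}$, $j\in J$, so by the argument in the proof of the Lemma $g'$ may be taken in $K_J$, and $g'K_{\{i\}}\mapsto g'K_{J\cup\{i\}}$ is a well-defined bijection of part $i$ onto $\{hK_{J\cup\{i\}}:h\in K_J\}$, since for $g',g''\in K_J$ we have $g'K_{\{i\}}=g''K_{\{i\}}\iff g'^{-1}g''\in K_J\cap K_{\{i\}}=K_{J\cup\{i\}}$.) Therefore
$$
\abs{V^i\cap X_C(0)}\;=\;\abs{\{hK_{J\cup\{i\}}:h\in K_J\}}\;=\;[\,K_J:K_{J\cup\{i\}}\,],
$$
a quantity depending only on $J$ and $i$, not on the chosen $J$-clique $C$. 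Setting $d_J(i):=[\,K_J:K_{J\cup\{i\}}\,]$ thus shows that $X$ (equivalently, its $1$-skeleton) is $(d_J)_{J\subsetneq I}$-partite type-regular.

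The only genuine content beyond invoking the Lemma is the verification that the isomorphism in part 3 respects the partition into parts --- that ``link of a $J$-clique'' always produces the same \emph{colored} complex, not merely the same uncolored one --- which is where the short argument in the parentheses above lives; the rest is bookkeeping, so I do not expect a real obstacle here. One caveat worth recording: for infinite $G$ the indices $[\,K_J:K_{J\cup\{i\}}\,]$ (e.g. $[\,G:K_{\{i\}}\,]=d_\emptyset(i)$) need not be finite, so for infinite systems the conclusion must be read with $\bbN$ allowing $\infty$; in the finite quotients produced later these indices are honest natural numbers, and axiom (A3) forces $d_{I\setminus\{i\}}(i)=[\,K_{I\setminus\{i\}}:K_I\,]\ge 2$, which keeps the resulting hyper-regularity degrees strictly positive and the hierarchy non-degenerate after symmetrization.
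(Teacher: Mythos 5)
Your proof is correct and follows essentially the same route as the paper's: both arguments rest on the fact that $G$ acts transitively on cliques of a given type $J$ (part 2 of the quoted Lemma) via type-preserving simplicial automorphisms, so the links of any two $J$-cliques are isomorphic as \emph{partitioned} complexes and in particular have parts of matching sizes. The only difference is that you route this through part 3 of the Lemma and thereby extract the explicit value $d_J(i)=[K_J:K_{J\cup\{i\}}]$, which the paper does not do in this proof but recovers separately in its ``Another proof'' subsection (Claim 2.13) and in the explicit degree computations.
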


\begin{proof}
Let $\tau \in I$. We know that every clique of type $\tau$ is of the form $g K_{\tau}$. It's easy to check that $G \xhookrightarrow{} Aut(X)$: The action of $G$ on the vertices of $X$, defined by $g.(g' K_{\{i\}}) = g g' K_{i}$ for $g, g' \in G, i\in I$ dictates a type preserving automorphism of $X$ which is simplicial, i.e., sends type $\sigma$ simplex to a type $\sigma$ simplex. So, for every $g_1 K_{\tau}, g_2 K_{\tau}$ it follows that their links are isomorphic (by the automorphisms $g_1 {g_2}^{-1}, g_2 g_1^{-1}$), and since these automorphisms are type preserving, for every $i\in I\setminus \tau$, the i'th sides of the links are of the same size.
\end{proof}

\begin{corollary}
For subgroup geometry system $\big(G,(K_{\{i\}})_{i\in I}\big)$, the coset complex 
$$ X^{\circledast S_{I}}:=\mathop{\bigcircledast}_{\pi \in S_I} \pi(X(G,(K_{\{i\}})_{i\in I})) $$ 
is strongly gallery connected HRG. In addition, if every $1$-dimensional link of $X$ is $\lambda$-expander, then $ X^{\circledast S_{I}}$ is a $\frac{\lambda}{1-(n-1)\lambda}$-expander. 
\end{corollary}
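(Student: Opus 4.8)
The statement is a corollary that assembles the results of Sections~\ref{sub:partite}, \ref{sub:sym} and \ref{sub:spectral} with the two structural facts just established for subgroup geometry systems, so my plan is simply to check that $X := X\big(G,(K_{\{i\}})_{i\in I}\big)$ and all of its relabelings $\pi(X)$, $\pi\in S_I$, satisfy the hypotheses of each tool, and then turn the crank. The structural input I would record first: by the Lemma of Kaufman and Oppenheim quoted above, $X$ is a pure $n$-dimensional, $(n+1)$-partite, strongly gallery connected clique complex, and by the preceding Theorem it is partite type-regular. Relabeling the $n+1$ parts by $\pi\in S_I$ affects none of these: purity, dimension, $(n+1)$-partiteness and strong gallery connectivity are properties of $X$ as a complex, while partite type-regularity only constrains the sizes of the intersections of link-parts with the parts, which a relabeling merely permutes. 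Hence each $\pi(X)$ is again pure $n$-dimensional, $(n+1)$-partite, strongly gallery connected and partite type-regular.

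For hyper-regularity and strong gallery connectivity, the product-closure Claim of Section~\ref{sub:partite} says that $\circledast$ of two pure $n$-dimensional, $(n+1)$-partite, strongly gallery connected clique complexes is again of that kind, so inducting over the $(n+1)!$ factors $\pi(X)$ --- at each step the partial product still meets the hypotheses needed for the next $\circledast$ --- gives that $X^{\circledast S_I}$ is pure $n$-dimensional, $(n+1)$-partite and strongly gallery connected; in particular it is strongly gallery connected. For hyper-regularity I would then apply Claim~\ref{cla:ptr} directly to $G=X$, which is $(n+1)$-partite type-regular, concluding that $X^{\circledast S_I}=X^{\circledast S_{n+1}}$ is hyper-regular. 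Together this gives that $X^{\circledast S_I}$ is a strongly gallery connected HRG.

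For the expansion claim, suppose every $1$-dimensional link of $X$ is a $\lambda$-expander. Fix an $(n-1)$-clique $S$ of $X^{\circledast S_I}$ and look at its (necessarily $1$-dimensional) link. Iterating Claim~\ref{claim:linkprod} over the factors, this link equals $\bigcircledast_{\pi\in S_I}\big(\pi(X)\big)_{S|_{\pi(X)}}$, where each $S|_{\pi(X)}$ is a clique of $\pi(X)$ supported on the same $n-1$ parts as $S$, i.e.\ a face of dimension $n-2$; hence every factor $\big(\pi(X)\big)_{S|_{\pi(X)}}$ is a $1$-dimensional link of $\pi(X)$, which is a $1$-dimensional link of $X$ after relabeling --- so by hypothesis a $\lambda$-expander, biregular (the Note following Proposition~\ref{prop:bip}), and connected with both sides nonempty (strong gallery connectivity together with purity). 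Proposition~\ref{prop:bip}, applied inductively over the $(n+1)!$ factors, then shows the link of $S$ is itself a connected $\lambda$-expander (the maximum of the equal values $\lambda$). Finally, $X^{\circledast S_I}$ is $n$-dimensional and, being strongly gallery connected, has every link --- including itself --- connected, so the Trickling-Down Theorem (Lemma~\ref{lem:trick}) lifts the $\lambda$-expansion of its top-level links to $\mu$-expansion with $\mu=\frac{\lambda}{1-(n-1)\lambda}$; running the same argument inside each link shows in fact that every link is a $\mu$-expander.

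There is no new idea in any of this --- the content is the earlier lemmas. The only step that needs genuine care is passing from the two-factor statements (the product-closure Claim, Claim~\ref{claim:linkprod}, Proposition~\ref{prop:bip}) to the $(n+1)!$-fold product: setting up the induction and verifying that each partial product still satisfies exactly the hypotheses the next lemma needs, in particular that a $\circledast$-product of the $\pi(X)$'s remains $(n+1)$-partite with a coherent ordering of its parts --- this is what legitimizes both the next $\circledast$ and the next use of Claim~\ref{claim:linkprod}. One should also double-check the small point that ``the $1$-skeleton of $X^{\circledast S_I}$ itself is connected'', needed to invoke the Trickling-Down Theorem, is exactly the connectivity clause in the definition of strong gallery connectivity.
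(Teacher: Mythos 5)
Your proposal is correct and follows exactly the route the paper intends: the corollary is stated without an explicit proof precisely because it is the assembly of the Kaufman--Oppenheim lemma, the partite type-regularity theorem, the product-closure claim, Claim~\ref{cla:ptr}, Claim~\ref{claim:linkprod} with Proposition~\ref{prop:bip}, and the Trickling-Down Theorem that you carry out. Your added care about the induction over the $(n+1)!$ factors and the connectivity hypothesis of Lemma~\ref{lem:trick} is exactly the right bookkeeping and introduces nothing beyond the paper's own toolkit.
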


\subsection{Another proof of hyper-regularity}
We present an additional approach, as this angle will be needed in the following sections.

\begin{claim}
\label{cla:tran}
Let $G$ be a group with subgroups $K_{\{i\}}$, $i\in I$, and let $X= X\big(G,(K_{\{i\}}
)_{i\in I}\big)$ be the corresponding coset complex. If for every $1\leq m \leq n$, every two intersections of $m$ different subgroups from $K_{\{i\}}$, $i\in I$ are isomorphic (enough to be of the same size actually), then $X$ is hyper-regular. 
\end{claim}

\begin{proof}
Let $1\leq m \leq n$. We show that the links of any two m-cliques are of the same size: Let $J,L\subset I$, both of size $m$. We saw that the links of any cliques of type $J, L$ are isomorphic to
$$
X(K_{J} ,(K_{J \cup \{i\}}
)_{i\in I\setminus J} 
), \:\: X(K_{L} ,(K_{L \cup \{i\}}
)_{i\in I\setminus L} 
)
$$
And since every equal-size intersections are isomorphic, their sizes are
$$
\sum_{i\in I\setminus J} [{K_J}\: : \: K_{J\cup\{i\}}] =  (n+1 - m) [{K_J}\: : \:K_{J\cup\{i\}}] =  (n+1 - m) [{K_L}\: : \:K_{L\cup\{i'\}}] = \sum_{i'\in I\setminus L} [{K_L} \: :\: K_{L\cup\{i'\}}]
$$
\end{proof}

\begin{note}
$\pi_1(X) \circledast \pi_2(X) \cong X(G\times G, (K_{\{\pi_1(i)\}}\times K_{\{\pi_2(i)\}}
)_{i\in I})$
\end{note}

\begin{claim}
\label{cla:closure}
Let $I\subset \bbN$ finite, $G_1, G_2$ be groups and $K_{1_i} \leq G_1,\: K_{2_i} \leq G_2$ for every $i\in I$. \\If $(G_1, (K_{1_i})_{i\in I})$ and $(G_2, (K_{2_i})_{i\in I})$ are subgroup geometry systems then so is 
$$(G_1 \times G_2, (K_{1_i}\times K_{2_i})_{i\in I})$$
\end{claim}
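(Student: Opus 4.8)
The plan is to verify the three axioms (A1)--(A3) of Definition~\ref{def:sgs} for the pair $\big(G_1\times G_2,\,(K_{1_i}\times K_{2_i})_{i\in I}\big)$ directly, using in each case the corresponding axiom for $(G_1,(K_{1_i})_{i\in I})$ and for $(G_2,(K_{2_i})_{i\in I})$ separately, coordinatewise. The key elementary facts I will use repeatedly are that for subgroups $A_1,B_1\le G_1$ and $A_2,B_2\le G_2$ one has $(A_1\times A_2)\cap(B_1\times B_2)=(A_1\cap B_1)\times(A_2\cap B_2)$, that $\langle A_1\times A_2,\,B_1\times B_2\rangle=\langle A_1,B_1\rangle\times\langle A_2,B_2\rangle$, and that for products of cosets $(A_1\times A_2)(B_1\times B_2)=(A_1B_1)\times(A_2B_2)$ as subsets of $G_1\times G_2$. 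Granting these, first I would observe that for $\tau\subset I$ the notation unwinds to $(K_{1}\times K_2)_\tau=\bigcap_{i\in\tau}(K_{1_i}\times K_{2_i})=K_{1_\tau}\times K_{2_\tau}$, with the convention $K_{1_\emptyset}=G_1$, $K_{2_\emptyset}=G_2$.

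For (A1), given $\tau,\tau'\subset I$ I compute $\langle (K_1\times K_2)_\tau,\,(K_1\times K_2)_{\tau'}\rangle=\langle K_{1_\tau}\times K_{2_\tau},\,K_{1_{\tau'}}\times K_{2_{\tau'}}\rangle=\langle K_{1_\tau},K_{1_{\tau'}}\rangle\times\langle K_{2_\tau},K_{2_{\tau'}}\rangle$, which by (A1) for each factor equals $K_{1_{\tau\cap\tau'}}\times K_{2_{\tau\cap\tau'}}=(K_1\times K_2)_{\tau\cap\tau'}$, as required. For (A2), fix $\tau\subsetneq I$ and $i\in I\setminus\tau$; then $(K_1\times K_2)_\tau\,(K_1\times K_2)_{\{i\}}=(K_{1_\tau}K_{1_{\{i\}}})\times(K_{2_\tau}K_{2_{\{i\}}})$, while $\bigcap_{j\in\tau}(K_1\times K_2)_{\{j\}}(K_1\times K_2)_{\{i\}}=\bigcap_{j\in\tau}\big(K_{1_{\{j\}}}K_{1_{\{i\}}}\times K_{2_{\{j\}}}K_{2_{\{i\}}}\big)=\big(\bigcap_{j\in\tau}K_{1_{\{j\}}}K_{1_{\{i\}}}\big)\times\big(\bigcap_{j\in\tau}K_{2_{\{j\}}}K_{2_{\{i\}}}\big)$; the two sides agree by (A2) applied in $G_1$ and in $G_2$. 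For (A3), $(K_1\times K_2)_I=K_{1_I}\times K_{2_I}$ and $(K_1\times K_2)_{I\setminus\{i\}}=K_{1_{I\setminus\{i\}}}\times K_{2_{I\setminus\{i\}}}$; since by (A3) $K_{1_I}\neq K_{1_{I\setminus\{i\}}}$ already (and likewise in the second factor), the products differ in the first (resp. second) coordinate, so $(K_1\times K_2)_I\neq(K_1\times K_2)_{I\setminus\{i\}}$.

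The only point that needs a little care, and which I expect to be the main (though minor) obstacle, is the identity $(A_1B_1)\times(A_2B_2)=\bigcap_j\big(C_{1_j}C_{1}\times C_{2_j}C_2\big)$ step in (A2): one must check that an intersection of products of subgroups (which are mere subsets, not subgroups) distributes over the direct product, i.e. $\bigcap_j (X_j\times Y_j)=(\bigcap_j X_j)\times(\bigcap_j Y_j)$ for arbitrary subsets $X_j\subset G_1$, $Y_j\subset G_2$ — this is true and elementary (an element $(x,y)$ lies in the left side iff $x\in X_j$ and $y\in Y_j$ for all $j$), but it is worth stating explicitly since $K_{i_\tau}K_{i_{\{i\}}}$ need not be a subgroup. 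Apart from this the proof is a routine coordinatewise bookkeeping, and once the distributivity lemmas above are recorded the three verifications are immediate.
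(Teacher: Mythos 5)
Your proposal is correct and takes the same route as the paper, which simply asserts that (A1)--(A3) are ``easy to verify'' coordinatewise; you have supplied exactly the bookkeeping the paper omits, including the right observation that $\bigcap_j (X_j\times Y_j)=(\bigcap_j X_j)\times(\bigcap_j Y_j)$ holds for arbitrary subsets, which is the only place where any care is needed since $K_\tau K_{\{i\}}$ need not be a subgroup. All three verifications check out (in (A1), surjectivity onto $\langle K_{1_\tau},K_{1_{\tau'}}\rangle\times\langle K_{2_\tau},K_{2_{\tau'}}\rangle$ uses that each factor contains the identity, which you implicitly rely on and which is fine since these are subgroups).
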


\begin{proof}
Easy to verify (A1), (A2) and (A3).
\end{proof}

\begin{corollary}
If $X(G,(K_{\{i\}}
)_{i\in I})$ is a subgroup geometry system, then so is  
$$X^{\circledast S_{I}} = \mathop{\bigcircledast}_{\pi \in S_{I}} \pi(X) \cong X\Big(G^{(n+1)!},(\prod\limits_{\pi \in S_{I}}K_{\{\pi(i)\}}
)_{i\in I}\Big)$$
\end{corollary}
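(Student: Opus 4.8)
The plan is to combine the previously established closure property (Claim \ref{cla:closure}) with the identification of the symmetrization product $X^{\circledast S_I}$ as a coset complex of a product group. Recall that by the note preceding Claim \ref{cla:closure}, for two permutations $\pi_1,\pi_2\in S_I$ we have $\pi_1(X)\circledast\pi_2(X)\cong X\big(G\times G,(K_{\{\pi_1(i)\}}\times K_{\{\pi_2(i)\}})_{i\in I}\big)$; this is exactly the statement that our partite product of coset complexes corresponds, on the group side, to taking a direct product of groups and the coordinate-wise products of the chosen subgroups. The first step is therefore to verify that this identification extends by induction to an arbitrary iterated product $\bigcircledast_{\pi\in S_I}\pi(X)$, yielding $X\big(G^{(n+1)!},(\prod_{\pi\in S_I}K_{\{\pi(i)\}})_{i\in I}\big)$, where the product $\prod_{\pi\in S_I}K_{\{\pi(i)\}}$ sits inside $G^{(n+1)!}$ as the subgroup whose $\pi$-th coordinate is $K_{\{\pi(i)\}}$.

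Second, I would invoke Claim \ref{cla:closure} inductively: since $(G,(K_{\{i\}})_{i\in I})$ is a subgroup geometry system, so is $(G,(K_{\{\pi(i)\}})_{i\in I})$ for each fixed $\pi$ (this is just relabeling the index set $I$, under which axioms (A1)--(A3) are manifestly invariant), and then repeatedly applying the two-factor closure statement shows that $\big(G^{(n+1)!},(\prod_{\pi\in S_I}K_{\{\pi(i)\}})_{i\in I}\big)$ is again a subgroup geometry system. Combining this with the coset-complex identification from the first step gives precisely the claimed isomorphism $X^{\circledast S_I}\cong X\big(G^{(n+1)!},(\prod_{\pi\in S_I}K_{\{\pi(i)\}})_{i\in I}\big)$ together with the assertion that it is a subgroup geometry system.

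The step I expect to require the most care is the base identification $\pi_1(X)\circledast\pi_2(X)\cong X\big(G\times G,(K_{\{\pi_1(i)\}}\times K_{\{\pi_2(i)\}})_{i\in I}\big)$ — making sure the bijection on vertices (sending a pair of cosets $(g_1K_{\{\pi_1(i)\}},g_2K_{\{\pi_2(i)\}})$ in the product graph to the coset $(g_1,g_2)(K_{\{\pi_1(i)\}}\times K_{\{\pi_2(i)\}})$ in $G\times G$) is well-defined, and that edges correspond correctly, i.e. that two cosets in the product group intersect nontrivially exactly when both coordinate projections of the cosets intersect nontrivially. This last point uses that $(g_1K\times g_2K')\cap(h_1L\times h_2L')=(g_1K\cap h_1L)\times(g_2K'\cap h_2L')$, which is elementary but is the crux of why the partite product lines up with the direct-product coset complex. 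Once that base case and its inductive extension are in hand, the corollary follows immediately from Claim \ref{cla:closure}, and the remaining bookkeeping (associativity of $\circledast$ matching associativity of $\times$, and tracking the index sets) is routine.
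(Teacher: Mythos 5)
Your proposal is correct and follows exactly the route the paper intends: the corollary is stated without proof precisely because it is meant to follow from the preceding Note (identifying $\pi_1(X)\circledast\pi_2(X)$ with the coset complex of the product group) together with an iterated application of Claim \ref{cla:closure}. The additional details you supply --- the permutation-invariance of axioms (A1)--(A3) under relabeling of $I$, and the coordinatewise intersection identity underlying the vertex/edge correspondence --- are exactly the routine verifications the paper leaves implicit.
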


\begin{claim}
$X^{\circledast S_{I}}$ is hyper-regular.
\end{claim}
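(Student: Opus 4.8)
The plan is to deduce the statement from Claim~\ref{cla:tran} applied to the concrete coset-complex presentation of $X^{\circledast S_I}$ obtained in the preceding corollary. By that corollary, $X^{\circledast S_I}\cong X\big(G^{(n+1)!},(\widehat K_i)_{i\in I}\big)$, where I index the $(n+1)!$ coordinates of $G^{(n+1)!}$ by the elements $\pi\in S_I$ and set $\widehat K_i:=\prod_{\pi\in S_I}K_{\{\pi(i)\}}$. Intersections of subgroups of a direct product are taken coordinatewise, so for any $J\subseteq I$,
$$
\widehat K_J\;:=\;\bigcap_{i\in J}\widehat K_i\;=\;\prod_{\pi\in S_I}\Bigl(\bigcap_{i\in J}K_{\{\pi(i)\}}\Bigr)\;=\;\prod_{\pi\in S_I}K_{\pi(J)},
$$
where $K_{\pi(J)}=\bigcap_{i\in J}K_{\{\pi(i)\}}$ in the notation of the excerpt. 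Thus, by Claim~\ref{cla:tran}, it is enough to show that for every $1\le m\le n$ and any two $m$-element subsets $J,L\subseteq I$ the intersections $\widehat K_J$ and $\widehat K_L$ are isomorphic.

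To establish that, first I would pick $\gamma\in S_I$ with $\gamma(J)=L$ and consider the automorphism $\Phi$ of $G^{(n+1)!}$ that relabels the coordinates of $S_I$ by right translation, namely $\Phi(x)$ has in its $\pi$-th coordinate the $(\pi\gamma)$-th coordinate of $x$. Since $\pi\mapsto\pi\gamma$ is a bijection of $S_I$, $\Phi$ is an automorphism, and for every subset $J'\subseteq I$ one gets
$$
\Phi\bigl(\widehat K_{J'}\bigr)=\prod_{\pi\in S_I}K_{(\pi\gamma)(J')}=\prod_{\pi\in S_I}K_{\pi(\gamma(J'))}=\widehat K_{\gamma(J')}.
$$
In particular $\Phi(\widehat K_J)=\widehat K_L$, so $\widehat K_J\cong\widehat K_L$, which is exactly the hypothesis of Claim~\ref{cla:tran}; moreover the same $\Phi$ carries each $\widehat K_{J\cup\{i\}}$ onto $\widehat K_{L\cup\{\gamma(i)\}}$ (with $i\mapsto\gamma(i)$ a bijection $I\setminus J\to I\setminus L$), so the relevant indices $[\widehat K_J:\widehat K_{J\cup\{i\}}]$ match those for $L$, which is all the proof of Claim~\ref{cla:tran} actually uses. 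Hence $X^{\circledast S_I}$ is hyper-regular. Working with the automorphism $\Phi$ rather than with cardinalities also keeps the argument valid when $G$ is infinite.

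For completeness I would also record the shorter route: since $|I|=n+1$ we have $X^{\circledast S_I}=X^{\circledast S_{n+1}}$, and a subgroup geometry system is partite type-regular by the theorem of Section~\ref{sec:sgs}, so Claim~\ref{cla:ptr} delivers hyper-regularity at once; the group-theoretic proof above is the ``another proof'' promised by the subsection title, included because the explicit presentation $X^{\circledast S_I}\cong X(G^{(n+1)!},(\widehat K_i)_{i\in I})$ is reused later. The only real obstacle in either route is bookkeeping: fixing the composition convention in $S_I$, checking that right translation is a bijection of the index set, and confirming that $\Phi$ respects the containments $\widehat K_{J\cup\{i\}}\le\widehat K_J$. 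There is no essential difficulty.
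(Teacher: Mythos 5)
Your proof is correct and follows essentially the same route as the paper: both reduce to Claim~\ref{cla:tran} by showing $\bigcap_{i\in J}\prod_{\pi}K_{\{\pi(i)\}}=\prod_{\pi}K_{\pi(J)}\cong\prod_{\pi}K_{\pi(L)}$ via a re-indexing of the $S_I$-indexed coordinates by right translation with $\gamma$. Your version merely makes explicit the coordinate-permutation automorphism $\Phi$ (and the matching of the indices $[\widehat K_J:\widehat K_{J\cup\{i\}}]$) that the paper compresses into the single ``$\cong$'' in its displayed chain.
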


\begin{proof}
Let $L, J \subset I$ of the same size. Let $\gamma \in S_{I}$ such that $\gamma(L)=J$. Then:
\vspace{0.5cm}
$$
\bigcap\limits_{j\in J} \prod\limits_{\pi \in S_{I}} K_{\{\pi(j)\}} = \prod\limits_{\pi \in S_{I}} K_{\pi(J)} =\prod\limits_{\pi \in S_{I}} K_{\pi \circ \gamma(L)} \cong \prod\limits_{\pi' \in S_{I}\cdot \gamma^{-1}} K_{\pi' \circ \gamma(L)} = \prod\limits_{\pi \in S_{I}} K_{\pi(L)} = \bigcap\limits_{l\in L}  \prod\limits_{\pi \in S_{I}} K_{\{\pi(l)\}}
$$
\end{proof}

\section{Construction method}
\label{sec:method}
Our constructions rely on graphs that arise from a subgroup geometry systems. We want to generate, for every $n$, an infinite family of finite $n$-dimensional strongly gallery connected HRG HDX.\\ 
We will say that a complex is a  $\lambda$-local-spectral expander if its 1-dimensional skeleton is a $\lambda$- one sided expander.
Formally, our objective is to construct, given $0<\lambda<1$, $n>1$, a family of pure $n$-dimensional finite hyper-regular simplicial complexes $\{X^{(s)}\}_{s\in \bbA}$, where $\bbA\in \bbN$ is an infinite set such that the following holds:
\begin{enumerate}
    \item There are $0<d_{n-1}< \ldots <d_0$ such that For every $s\in \bbA$, $X^{(s)}$ is $(d_0, \ldots, d_{n-1})$-regular. 
    \item For every $s\in \bbA$, $X^{(s)}$ is a one sided $\lambda$-local-spectral expander. 
    \item The number of vertices tends to $\infty$ with $s$.
\end{enumerate}

Given an $n$-dimensional infinite, locally finite subgroup geometry system, we do so by taking appropriate finite (arbitrarily large) quotients of the underlying group and subgroups which yield a finite subgroup geometry system that has the same link structure as the infinite graph. Then, we apply the symmetrization on the finite system, and get a finite (arbitrarily large) strongly gallery connected  $n$-dimensional HRG that has the expansion properties of the infinite graph.
\\\\
Generally, one can use the following lemma, proved by Kaufman and Oppenheim (Proposition 2.12 in \cite{kaufman2017simplicial}):
\begin{lemma}
\label{lem:quo}
Let $(G,(K_{\{i\}})_{i\in I})$ be a subgroup geometry system and let $N \triangleleft G$ be a normal subgroup. Assume that for every $i\in I$, $K_{\{i\}} \cap N = \{e\}$. Then $(G/N,(K_{\{i\}} N/N)_{i\in I})$ is a subgroup geometry system. Furthermore, if we denote $X_G = X(G,(K_{\{i\}})_{i\in I})$ and $X_{G/N}= X(G/N,(K_{\{i\}} N/N)_{i\in I})$, then the following holds:
\begin{enumerate}
    \item $X_G$ and $X_{G/N}$ have the same links. Specifically: for every non-empty $\tau \subseteq I$ and every $g \in G$ the link of $(gN)(K_\tau N)/N$ in $X_{G/N}$ is isomorphic to the link of $gK_\tau$ in $X_G.$

    \item If $G/N$ is a finite group, then $X_{G/N}$ is a finite complex and $|X_{G/N}(n)|\leq |G/N|$.
    \item $X_G$ is a covering of $X_{G/N}$.
\end{enumerate}
\end{lemma}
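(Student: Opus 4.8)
The plan is to work with the quotient homomorphism $q\colon G\to \overline{G}:=G/N$ and, for a subgroup $H\le G$, to abbreviate $q(H)=HN/N$; I will write $\overline{K}_{\{i\}}:=q(K_{\{i\}})$ for the subgroups defining the new system, $\overline{K}_\tau:=\bigcap_{i\in\tau}\overline{K}_{\{i\}}$ for the resulting intersections, and $q(K_\tau)=K_\tau N/N$ for the image of the old intersection. The first observation is that $K_{\{i\}}\cap N=\{e\}$ together with normality of $N$ makes $q$ injective on each $K_{\{i\}}$, hence on each $K_\tau\subseteq K_{\{i\}}$, and also makes $N$ act on $X_G$ by type-preserving simplicial automorphisms with trivial vertex stabilizers, since $\mathrm{Stab}_N(gK_{\{i\}})=N\cap gK_{\{i\}}g^{-1}=g\big(N\cap K_{\{i\}}\big)g^{-1}=\{e\}$.

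The whole statement rests on one identity, which I would establish first: for every $\tau\subseteq I$,
$$\overline{K}_\tau \;=\; q(K_\tau),\qquad\text{equivalently}\qquad \bigcap_{i\in\tau}K_{\{i\}}N \;=\; K_\tau N ,$$
together with its ``double-coset'' companion $q\big(\bigcap_{j\in\tau}K_{\{j\}}K_{\{i\}}\big)=\bigcap_{j\in\tau}q(K_{\{j\}}K_{\{i\}})$ for $i\notin\tau$. In each case one inclusion is formal; the content is that $q$ commutes with these particular intersections, and this is exactly where axioms (A1)--(A2) of the original system and the hypothesis on $N$ are consumed — I would run an induction on $|\tau|$, reducing the inductive step to the identity $K_{\tau'}N\cap K_{\{j\}}N=(K_{\tau'}\cap K_{\{j\}})N$ and then invoking (A2) for $\tau'$ (with $i=j$) together with $K_{\{j\}}\cap N=\{e\}$. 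I expect this to be the one genuinely delicate point: the requirement that $N$ meet each $K_{\{i\}}$ trivially has to be used in conjunction with enough genericity of $N$ relative to the finitely many group elements that govern the local combinatorics — which is automatic in the applications, where $N$ is a congruence subgroup of large level, and is the form in which Kaufman and Oppenheim phrase it.

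Granting the identity, the remainder is bookkeeping. The system $\big(\overline{G},(\overline{K}_{\{i\}})_{i\in I}\big)$ satisfies (A1) because $q$ turns a ``generated subgroup'' into the subgroup generated by the images: $\langle q(K_\tau),q(K_{\tau'})\rangle=q\langle K_\tau,K_{\tau'}\rangle=q(K_{\tau\cap\tau'})=\overline{K}_{\tau\cap\tau'}$; it satisfies (A2) because this is the double-coset identity above, rewritten using $q(K_\tau K_{\{i\}})=q(K_\tau)q(K_{\{i\}})=\overline{K}_\tau\overline{K}_{\{i\}}$; and it satisfies (A3) because $K_I\subsetneq K_{I\setminus\{i\}}$ while $q$ is injective on $K_{I\setminus\{i\}}$, so $\overline{K}_I=q(K_I)\subsetneq q(K_{I\setminus\{i\}})=\overline{K}_{I\setminus\{i\}}$. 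Hence the cited lemma applies to the new system: every type-$\tau$ clique of $X_{G/N}$ is some $\overline{g}\,\overline{K}_\tau$, its link is $X\big(\overline{K}_\tau,(\overline{K}_{\tau\cup\{i\}})_{i\in I\setminus\tau}\big)$, and $X_{G/N}$ is pure $n$-dimensional, $(n+1)$-partite and strongly gallery connected.

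Item~1 (same links) then follows: the isomorphism $q|_{K_\tau}\colon K_\tau\xrightarrow{\ \sim\ }\overline{K}_\tau$ carries $K_{\tau\cup\{i\}}$ onto $\overline{K}_{\tau\cup\{i\}}$ by the identity, hence induces an isomorphism of the corresponding coset complexes, i.e.\ of the link of $gK_\tau$ in $X_G$ with the link of $\overline{g}\,\overline{K}_\tau$ in $X_{G/N}$. Item~2 is a count: $X_{G/N}$ has at most $\sum_{i\in I}[\overline{G}:\overline{K}_{\{i\}}]\le(n+1)\,|G/N|$ vertices, hence is a finite complex when $G/N$ is finite; and since every $n$-face is some $\overline{g}\,\overline{K}_I=\{\overline{g}\,\overline{K}_{\{i\}}\}_{i\in I}$, the assignment $\overline{g}\mapsto\{\overline{g}\,\overline{K}_{\{i\}}\}_{i\in I}$ is a surjection $G/N\twoheadrightarrow X_{G/N}(n)$, so $|X_{G/N}(n)|\le|G/N|$. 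Item~3: the simplicial surjection $q_*\colon X_G\to X_{G/N}$, $gK_{\{i\}}\mapsto\overline{g}\,\overline{K}_{\{i\}}$ (well defined since $q(K_{\{i\}})=\overline{K}_{\{i\}}$, and surjective since each $n$-face lifts and lower faces are faces of $n$-faces), restricts around each vertex $v$ to exactly the link identification of item~1, so it is a local isomorphism onto its image — which is precisely the assertion that $q_*$ is a simplicial covering map.
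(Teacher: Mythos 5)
You cannot really be compared against the paper here, because the paper does not prove this lemma at all: it is quoted as Proposition 2.12 of Kaufman--Oppenheim \cite{kaufman2017simplicial} and used as a black box. Judged on its own terms, your proposal is correctly organized --- everything does reduce to the identity $\bigcap_{i\in\tau}K_{\{i\}}N=K_\tau N$ together with its double-coset companion, and granting that, your derivations of (A1)--(A3), of items 1 and 2, and of the covering property are sound. But that identity is exactly where your argument has a genuine gap, and you half-concede this yourself. The inductive step you propose, $K_{\tau'}N\cap K_{\{j\}}N=(K_{\tau'}\cap K_{\{j\}})N$, does not follow from (A2) together with $K_{\{j\}}\cap N=\{e\}$: an element of the left-hand side gives $k_1m_1=k_2m_2$ with $k_1\in K_{\tau'}$, $k_2\in K_{\{j\}}$, $m_1,m_2\in N$, hence a (possibly nontrivial) element $k_2^{-1}k_1$ of $K_{\{j\}}K_{\tau'}\cap N$, and to conclude you need \emph{that} intersection to be trivial. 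Axiom (A2) concerns products of the $K$'s with each other and says nothing about how $N$ sits relative to such products, and triviality of each $K_{\{i\}}\cap N$ separately does not imply triviality of $K_{\{i\}}K_{\{j\}}\cap N$: already in $S_3$ the subgroups $\langle(12)\rangle$ and $\langle(13)\rangle$ each meet $A_3$ trivially while their product contains a $3$-cycle. Your appeal to ``enough genericity of $N$'' is an acknowledgment that an extra hypothesis is being consumed, not a proof from the stated one.

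The same stronger condition is what items 1 and 3 actually rest on: two neighbours $k_iK_{\{j\}}$ and $k_i'K_{\{j\}}$ of the vertex $K_{\{i\}}$ (with $k_i,k_i'\in K_{\{i\}}$) become identified in $X_{G/N}$ precisely when $K_{\{j\}}K_{\{i\}}\cap N$ contains a nontrivial element, so injectivity of $q_*$ on stars --- hence ``same links'' and the covering property --- requires $N\cap K_{\{i\}}K_{\{j\}}=\{e\}$ for all $i,j$, not merely $N\cap K_{\{i\}}=\{e\}$. That product condition is the form of hypothesis under which the Kaufman--Oppenheim argument actually runs (and it is what gets verified in the congruence-kernel applications); the statement as transcribed in this paper is weaker than what the proof uses. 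To make your write-up correct you should either strengthen the hypothesis to $N\cap K_{\{i\}}K_{\{j\}}=\{e\}$ and then carry out your induction honestly, or supply a separate argument that in a subgroup geometry system the pointwise condition implies the product condition --- you have not shown this, and it is not a formal consequence.
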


\section{Constructions}
\label{sec:constructions}
Here we state two main examples of subgroup geometry systems, where one of them has the desired expansion properties in its $1$-dimensional links. 

\subsection{Elementary matrices groups}
\label{sec:elem}
 Recently, Kaufman and Oppenheim discovered new examples of groups with subgroup geometry systems, using elementary matrices groups and Steinberg groups. We state here a short description and their results. See sections 3, 4 in \cite{kaufman2017simplicial} for full elaboration, and Dinur's notes about the 2-dimensional case \cite{HDX}.\\\\ 
 Let $R$ be a unital commutative ring and $\cR$ be finitely generated $R$-algebra. Let $\{t_1,\ldots,t_{l}\}$ be a generating set of $\cR$ and let $T$ be the $R$-module generated by $\{1,t_1,\ldots,t_l\}$. i.e., $T=\{r_0+ \sum_i r_i t_i : r_i \in R\}$.  For $0 \leq i, j \leq n$, $i\neq j$ and $r \in \cR$, let $e_{i,j} (r)$ be the $(n + 1) \times (n + 1)$ matrix with 1?s along the main diagonal, $r$ in the $(i, j)$ entry and $0$?s in all the other entries. The group of elementary matrices denoted $EL_{n+1}(\cR)$ is the group generated by the elementary  matrices with coefficients in $\cR$, i.e., $EL_{n+1}(\cR)= \langle e_{i,j}(r): 0\leq i,j\leq n, i\neq j, r\in \cR \rangle$.\\
 Let $R, \cR, \{1, t_1,\ldots,t_l\}$ and $T$ as above, $n\geq 2$ and $I=\{0,\ldots,n\}$. For $i\in I$ define $K_{\{i\}}< EL_{n+1}(\cR)$ by
 $$
 K_{\{i\}} = \langle e_{j,j+1}(m): j\in I\setminus \{i\}, m\in T \rangle
 $$
 Where $j+1$ is taken modulo $n+1$. Now,
$$( EL_{n+1}(\cR), (K_{\{i\}})_{i\in I})$$
is a subgroup geometry system (see theorem 3.5  in \cite{kaufman2017simplicial}). \paragraph{Infinite family construction} Let $n\geq 2$ and let $q> (n-1)^2$ be a prime power. For $s\in \bbN$, let $\bbF_{q}[t]/\langle t^s\rangle$ be the $\bbF_q$ algebra with the generating set $\{1,t\}$. Let $X^{(s)}$ be the simplicial complex of the subgroup geometry system of $EL_{n+1}(\bbF_{q}[t]/\langle t^s\rangle)$ described above. Then for every $s>n$, the following holds (Theorem 4.10 in \cite{kaufman2017simplicial}):
\begin{enumerate}
    \item $X^{(s)}$ is a pure $n$-dimensional, $(n+1)$-partite, strongly gallery connected clique complex.
    \item $X^{(s)}$ is finite and the number of vertices of $X^{(s)}$ tends to infinity as s tends to infinity.
    \item Each 1-dimensional link of $X^{(s)}$ has a spectral gap at most $\frac{1}{\sqrt{q}}$.
\end{enumerate}
Thus, given any $\lambda > 0$, for large enough $q$: 
$$\{\bigcircledast_{\pi \in S_I} \pi\big(X^{(s)}\big) : s>2^{n-1}\}$$ 
is an infinite family of n-dimensional, hyper-regular, $\lambda$-local spectral expanders.

\subsection{Type $\tilde{A}_{n-1}$ Coxeter group}
\label{sec:triang}
{\bf Overview:} In this section we present another family of geometry systems each giving rise to an infinite family of type-regular graphs, which after regularization via symmetrization yield HRG of high dimension. The group we use for this construction is a type $\tilde{A}_{n-1}$ Coxeter group.
This group has a realization as a group of symmetries of triangulations of the Euclidean space. This has several  qualitative and pedagogical consequences; The geometric interpretation, and especially its 2-dimensional case, are relatively easy to visualize and understand, as the 2-dimensional case simply gives rise to the $(6,2)$-regular graphs that are the hexagonal tilings of finite tori. A small variation of this, example \ref{subsub:3r} in subsection \ref{sub:ad}, yields $(2 \binom{3 r}{r}, \binom{2r}{r})$-regular graphs for every integer $r>1$, which have a simple combinatorial description. 
An unfortunate consequence of the Euclidean origin of these graphs is that they have a natural periodic embedding in Euclidean space, which rules out the possibility of them being good expanders, due to the isoperimetric inequality. This did, however, lead us to discovering the $(120,12,5,2)$-regular example that is embedded in hyperbolic space. 

\subsubsection{Affine permutation group}
The affine permutation group is a well known type $\tilde{A}_{n-1}$ Coxeter group (see section 8.3 in \cite{coxbook}).
\begin{definition}
\label{def:afper}
Let $\Tilde{S}_{n}$ be the group of affine permutations of the integers. i.e., the group of all permutations $u$ of the set $\bbZ$ such that
$$
u(j+n) = u(j) + n \:\: \forall j\in \bbZ
$$
and
$$
\sum_{i=1}^{n} u(i) = \binom{n+1}{2}
$$
\end{definition}
Clearly, such a $u$ is uniquely determined by its values on $\{1,\ldots,n\}$, and we write $u = [a_1,\ldots,a_{n}]$ to mean that
$u(i) = a_i$. As a set of generators for $\Tilde{S}_{n}$, we take $\{\Tilde{s}_0,\ldots, \tilde{s}_{n-1}\}$ where
$$
\Tilde{s}_i = [1,\ldots,i-1,i+1,i,i+2,\ldots,n]
$$
for $0< i\leq n-1$ and
$$
\Tilde{s}_{0} = [0,2\ldots,n-1, n+1]
$$
Define, naturally, the subgroups $K_{\{i\}}\leq \tilde{S}_{n}$ where $K_{\{i\}} =\langle\{\tilde{s}_j: j\neq i\} \rangle$ for $0\leq i \leq n-1$.\\
\begin{proposition}
\label{lem:affine}
 Let $I = \{0,1,\ldots,n-1\}$.
\begin{enumerate}
    \item  $(\tilde{S}_{n}, (K_{\{i\}})_{i\in J})$ is a subgroup geometry system for every $J \subset I$, $|J|>1$.
    \item Every set of $n-1$ generators form an isomorphic copy of the permutation group on $n$ elements, i.e., $K_{\{i\}} \cong S_{n}$ according to the generators map $\tilde{s_i}\rightarrow (i, i+1 \mod n)$.
    \item Let $J,L\subseteq I$. Then $\langle\{\tilde{s}_i: i\in J\}\rangle \cap \langle\{\tilde{s}_i: i\in L\}\rangle = \langle\{\tilde{s}_i: i\in J\cap L\}\rangle$. \\E.g., $K_{\{j, l\}} := K_{\{j\}}\cap K_{\{l\}} = \langle\{\tilde{s}_i: i \notin \{j,l\}\} \rangle$.
\end{enumerate}
\end{proposition}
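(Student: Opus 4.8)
The strategy is to recognize the affine permutation group $\tilde S_n$ as the affine Coxeter group of type $\tilde A_{n-1}$, with the $\tilde s_i$ being its standard Coxeter generators, and then to invoke general facts about Coxeter groups and their parabolic subgroups. Concretely, I would first verify that $(\tilde S_n, \{\tilde s_0,\dots,\tilde s_{n-1}\})$ is a Coxeter system whose Coxeter diagram is the cycle $\tilde A_{n-1}$: each $\tilde s_i$ is an involution, adjacent generators (indices differing by $1$ mod $n$) satisfy a braid relation of order $3$, and non-adjacent generators commute. This is a direct computation with the one-line notation $u=[a_1,\dots,a_n]$ extended periodically by $u(j+n)=u(j)+n$; for $n\ge 3$ one gets exactly the $\tilde A_{n-1}$ diagram, and the $n=2$ case is the infinite dihedral group, which can be handled separately or excluded as in the statement (note the hypothesis $|J|>1$ in part 1). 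Part 2 then follows because deleting one node from the cycle $\tilde A_{n-1}$ leaves the linear $A_{n-1}$ diagram, whose Coxeter group is $S_n$; the explicit isomorphism $\tilde s_i \mapsto (i,i+1)$ (read mod $n$) is checked on generators, using that these transpositions satisfy the same braid/commutation relations and generate $S_n$.

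Part 3 is the key structural input: it is the standard fact that in any Coxeter system $(W,S)$, parabolic subgroups satisfy $W_J \cap W_L = W_{J\cap L}$ for $J,L\subseteq S$. I would either cite this (e.g. from \cite{coxbook}) or sketch the usual proof via the deletion condition / the fact that $W_J$ is exactly the set of elements all of whose reduced words use only letters from $J$. Given Parts 2 and 3, Part 1 reduces to checking axioms (A1), (A2), (A3) of Definition \ref{def:sgs} for the system $(\tilde S_n, (K_{\{i\}})_{i\in J})$ with $|J|>1$. Axiom (A1), $K_{\tau\cap\tau'} = \langle K_\tau, K_{\tau'}\rangle$, is immediate from Part 3 together with $\langle W_{J}, W_{L}\rangle = W_{J\cup L}$, which is trivial. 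Axiom (A3), $K_I \ne K_{I\setminus\{i\}}$, holds because removing a node strictly enlarges the parabolic subgroup (the Coxeter diagram is connected and $W$ is infinite, so $K_I = \langle \tilde s_j : j\in I\setminus J \rangle$ is a proper parabolic, while $K_{I\setminus\{i\}}$ is strictly larger — one exhibits an element of $K_{I\setminus\{i\}}$ not in $K_I$ using that the corresponding generator is not redundant, which again follows from the reduced-word characterization).

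The main obstacle is Axiom (A2): for every $\tau\subsetneq I$ and $i\in I\setminus\tau$, one needs $K_\tau K_{\{i\}} = \bigcap_{j\in\tau} K_{\{j\}}K_{\{i\}}$. The inclusion $\subseteq$ is automatic; the content is $\supseteq$, i.e. if $g$ lies in $W_{S\setminus\{j\}}\, W_{S\setminus\{i\}}$ for every $j\in\tau$ then $g\in W_{S\setminus\tau}\,W_{S\setminus\{i\}}$. I expect this to follow from the theory of double cosets and distinguished coset representatives in Coxeter groups: each double coset $W_J \backslash W / W_L$ has a unique minimal-length representative, and the set $W^L$ of minimal-length representatives of $W/W_L$ behaves well under intersection of the left parabolics. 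Precisely, writing any $g = w\cdot v$ with $v\in W_{\{i\}}^{\,c}$... — more cleanly: $g\in K_{\{j\}}K_{\{i\}}$ iff the minimal-length representative of $gW_{S\setminus\{i\}}$ lies in $W_{S\setminus\{j\}}$, and one checks that an element whose minimal representative (mod $W_{S\setminus\{i\}}$) lies in $W_{S\setminus\{j\}}$ for all $j\in\tau$ must have that representative in $\bigcap_j W_{S\setminus\{j\}} = W_{S\setminus\tau}$, using Part 3 again. This is essentially the argument Kaufman–Oppenheim use to verify (A2) for their elementary-matrix systems, adapted to the Coxeter setting; alternatively, since the statement closely parallels results already in \cite{kaufman2017simplicial} and the building-theoretic literature, one may be able to cite it directly. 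I would present the double-coset argument carefully, as it is the one genuinely non-formal step.
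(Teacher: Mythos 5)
Your proposal is correct and follows essentially the same route as the paper: the paper's entire proof is the citation ``1 and 3 follow from Theorem 5.2 in \cite{cohen}; 2 is easy to verify,'' and your plan simply reconstructs the content of that theorem --- identifying $(\tilde S_n,\{\tilde s_i\})$ as the type $\tilde A_{n-1}$ Coxeter system and invoking the standard parabolic-subgroup facts ($W_J\cap W_L=W_{J\cap L}$, reduced-word characterization, minimal double-coset representatives for (A2)). Your fleshed-out verification of (A1)--(A3) is sound and is exactly what the cited theorem packages.
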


\begin{proof}
1 and 3 follows from theorem 5.2 in \cite{cohen}. 2 is easy to verify.
\end{proof}

\begin{corollary}
\label{cor:permsize}
For $\tau \subset I, \tau\neq \emptyset$, $K_{\tau} = \langle\{\tilde{s}_i: i \notin \tau\} \rangle \cong \langle\{(i, i+1 \mod n): i \notin \tau \}\rangle \leq S_{n}$
\end{corollary}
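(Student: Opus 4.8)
The plan is to deduce the statement directly from Proposition~\ref{lem:affine}, treating it as bookkeeping on top of parts (2) and (3) there. There are really two assertions to verify: first, that $K_\tau$ equals the subgroup generated by the ``complementary'' generators $\{\tilde s_i : i\notin\tau\}$; and second, that this subgroup of $\tilde S_n$ is isomorphic to the subgroup of $S_n$ generated by the corresponding cyclic adjacent transpositions $\{(i,i+1\mod n):i\notin\tau\}$.

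For the first assertion, recall that $K_\tau := \bigcap_{i\in\tau}K_{\{i\}}$ and $K_{\{i\}} = \langle\{\tilde s_j : j\ne i\}\rangle$. I would argue by induction on $|\tau|$: the base case $|\tau|=1$ is just the definition of $K_{\{i\}}$, and for the inductive step I pick some $i\in\tau$ and apply Proposition~\ref{lem:affine}(3) with $J = I\setminus\{i\}$ and $L = I\setminus(\tau\setminus\{i\})$, which gives $K_{\{i\}}\cap K_{\tau\setminus\{i\}} = \langle\{\tilde s_j : j\in J\cap L\}\rangle = \langle\{\tilde s_j : j\notin\tau\}\rangle$. (Equivalently, one can iterate part (3) directly to intersect all of the $K_{\{i\}}$, $i\in\tau$, at once.)

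For the second assertion, I would use the hypothesis $\tau\ne\emptyset$ to fix some $j\in\tau$. Then $I\setminus\tau\subseteq I\setminus\{j\}$, so by the first part $K_\tau = \langle\{\tilde s_i:i\notin\tau\}\rangle$ is a subgroup of $K_{\{j\}}$. By Proposition~\ref{lem:affine}(2) there is an isomorphism $\psi_j\colon K_{\{j\}}\to S_n$ sending $\tilde s_i\mapsto (i,i+1\mod n)$ for every $i\ne j$. Restricting $\psi_j$ to $K_\tau$ yields an injective homomorphism whose image is the subgroup of $S_n$ generated by $\{\psi_j(\tilde s_i):i\notin\tau\} = \{(i,i+1\mod n):i\notin\tau\}$; hence $K_\tau\cong\langle\{(i,i+1\mod n):i\notin\tau\}\rangle\le S_n$, as claimed.

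I do not expect a genuine obstacle here: the only points that need a moment's care are that the hypothesis $\tau\ne\emptyset$ is exactly what lets us choose $j\in\tau$ and thereby place $K_\tau$ inside a single copy of $S_n$ (so that the cited isomorphism can be restricted), and that the generating set $\{\tilde s_i : i\notin\tau\}$ produced by the induction coincides with the one in the statement. Everything else is immediate from Proposition~\ref{lem:affine}.
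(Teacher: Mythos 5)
Your argument is correct and is exactly the deduction the paper intends: the paper states this as an immediate corollary of Proposition~\ref{lem:affine} without writing out a proof, and your two steps (iterating part (3) to identify $K_\tau$ with $\langle\{\tilde s_i : i\notin\tau\}\rangle$, then restricting the isomorphism of part (2) from a single $K_{\{j\}}$, $j\in\tau$, down to $K_\tau$) are precisely the omitted bookkeeping. No gaps.
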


\paragraph{Expansion} 
\label{par:afexp}
From \ref{cor:permsize} it follows that for $\tau\subset I, |\tau| = n-2$,  the subgroup $K_\tau$ is isomorphic to either $S_2 \times S_2$ or $S_3$, thus all $1$-dimensional links are isomorphic to a cycle, either $C_4$ or $C_6$. It is known that $\lambda_2(C_4)= 0, \lambda_2(C_6)= 0.5$, hence the upper bound on the expansion parameter we can obtain using \ref{prop:bip}, \ref{lem:trick} is $\frac{0.5}{1-(n-2)0.5}$, which is unfortunately trivial for $n>2$.  

\paragraph{Infinite family construction} The group $\tilde{S}_{n}$ acts on $\bbZ$. We restrict the action of $\tilde{S}_{n}$ to appropriate finite sets.
\begin{proposition}
 For $k\geq 1$, let ${\tilde{S}_{n}}(k)$ denote the group of affine permutations of $\bbZ_{k\cdot n}$, defined similarly, where the second condition in the definition is taken $\mod k\cdot n$. Then, the following holds:
\begin{enumerate}
    \item ${\tilde{S}_{n}}(k)$ is well defined.
    \item ${\tilde{S}_{n}}(k)$ is isomorphic to a quotient group of ${\tilde{S}_{n}}$, i.e., the following mapping is an homomorphism:
    $$
    \varphi: {\tilde{S}_{n}} \rightarrow {\tilde{S}_{n}}(k)
    $$
    $$
    \varphi([a_1,\ldots,a_{n}]) = [a_1 \mod k\cdot n,\:\ldots\:, a_{n} \mod k\cdot n]
    $$
    \item Let $N= Ker(\varphi)$. For every $i\in \{0,1,\ldots,n-1\}$, $N\cap K_{\{i\}} = \{e\}$
    \item $|{\tilde{S}_{n}}(k)| = k^{n-1}n!$
\end{enumerate}
\end{proposition}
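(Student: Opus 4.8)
The plan is to describe $\tilde S_n$ (and hence $\tilde S_n(k)$) via the standard window-notation picture and push everything through it. Set $Q=\{c=(c_1,\dots,c_n)\in\bbZ^n:\sum_i c_i=0\}$, the type $\tilde A_{n-1}$ root lattice, so $Q\cong\bbZ^{n-1}$. Each $u=[a_1,\dots,a_n]\in\tilde S_n$ decomposes uniquely as $a_i=\sigma(i)+n c_i$ with $\sigma(i)\in\{1,\dots,n\}$ the residue of $a_i$ modulo $n$ and $c_i\in\bbZ$; the relation $u(j+n)=u(j)+n$ together with injectivity of $u$ forces $\sigma\in S_n$, and the balance condition $\sum_i a_i=\binom{n+1}{2}=\sum_i\sigma(i)$ is exactly $c\in Q$. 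A short computation shows $u\mapsto(\sigma,c)$ is a group isomorphism $\tilde S_n\cong Q\rtimes S_n$, in which $Q$ is the normal subgroup of pure translations $t_c\colon i\mapsto i+n c_i$ (extended by $t_c(j+n)=t_c(j)+n$) and $S_n$ permutes the coordinates of $Q$.

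For parts (1) and (2): reduction modulo $kn$ on $\bbZ$ is compatible with the relation $u(j+n)=u(j)+n$ (since $u(j+kn)=u(j)+kn$), so every $u\in\tilde S_n$ induces a well-defined map $\bar u$ on $\bbZ_{kn}$; it is injective, because $\bar u(a)=\bar u(b)$ gives $u(a)=u(b)+knm=u(b+knm)$ for some $m\in\bbZ$, hence $a\equiv b\pmod{kn}$ by injectivity of $u$. Thus $\bar u\in\operatorname{Sym}(\bbZ_{kn})$ satisfies $\bar u(j+n)=\bar u(j)+n$ and $\sum_{i=1}^{n}\bar u(i)\equiv\binom{n+1}{2}\pmod{kn}$, so $\varphi(u):=\bar u$ lands in $\tilde S_n(k)$ and is visibly a homomorphism. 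That $\tilde S_n(k)$ is a group (the content of ``well defined'') and that $\varphi$ is onto it are shown together: given $v\in\tilde S_n(k)$, lift $v(1),\dots,v(n)$ to integers $a_1,\dots,a_n$ with $a_i\equiv v(i)\pmod{kn}$, and then replace $a_1$ by $a_1-\bigl(\sum_i a_i-\binom{n+1}{2}\bigr)$; the subtracted quantity is a multiple of $kn$ by the balance condition on $v$, so the residues of the $a_i$ modulo $kn$ (in particular modulo $n$) are unchanged, whence $u=[a_1,\dots,a_n]\in\tilde S_n$ and $\varphi(u)=v$. Consequently $\tilde S_n(k)=\varphi(\tilde S_n)\cong\tilde S_n/\ker\varphi$ is a quotient of $\tilde S_n$.

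For part (3): unwinding the definition, $N=\ker\varphi=\{u\in\tilde S_n:u(j)\equiv j\pmod{kn}\text{ for all }j\}$, which in window notation forces $\sigma=\operatorname{id}$ and $c\in kQ$; hence $N=\{t_{kc}:c\in Q\}\cong kQ\cong\bbZ^{n-1}$ is torsion-free. (Concretely: for $u\in N$ write $u(j)=j+kn\gamma(j)$ with $\gamma$ an $n$-periodic integer function satisfying $\sum_{i=1}^{n}\gamma(i)=0$; since $u$ preserves residues modulo $n$ one gets $u^m(j)=j+knm\gamma(j)$, so $u^m=e$ forces $\gamma\equiv 0$.) Since $K_{\{i\}}\cong S_n$ is finite by Proposition~\ref{lem:affine}(2), $N\cap K_{\{i\}}$ is a finite subgroup of the torsion-free group $N$, hence $\{e\}$. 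For part (4): $\tilde S_n(k)\cong\tilde S_n/N\cong(Q\rtimes S_n)/kQ\cong(Q/kQ)\rtimes S_n$ — here $kQ=Q\cap k\bbZ^n$ is $S_n$-invariant, hence normal in $Q\rtimes S_n$ — and $|Q/kQ|=k^{n-1}$ since $Q\cong\bbZ^{n-1}$, giving $|\tilde S_n(k)|=k^{n-1}n!$. (Equivalently, count directly: there are $n!$ choices for the residue permutation $\sigma$, and, writing $a_i=\sigma(i)+n b_i$ with $b_i\in\bbZ_k$, the balance condition modulo $kn$ becomes $\sum_i b_i\equiv 0\pmod k$, with $k^{n-1}$ solutions.)

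The only steps needing real care are the surjectivity of $\varphi$ onto $\tilde S_n(k)$ as independently defined (the lifting-and-adjustment argument above) and the check that reduction modulo $kn$ genuinely yields an element of $\operatorname{Sym}(\bbZ_{kn})$; once the dictionary $\tilde S_n\cong Q\rtimes S_n$ is in place, parts (3) and (4) are essentially immediate and the remainder is bookkeeping.
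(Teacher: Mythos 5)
Your proof is correct. The paper dismisses this proposition with ``Easy to verify,'' so there is nothing to compare against line by line; the decomposition you build everything on, $\tilde S_n\cong Q\rtimes S_n$ with $Q$ the rank-$(n-1)$ root lattice, is exactly the one the paper itself develops a few paragraphs later (there written $\tilde S_n\cong S_n\rtimes \bbA_0$), so your argument sits squarely inside the paper's own framework and simply supplies the omitted details. The one step you leave implicit is in the surjectivity of $\varphi$: to conclude that the adjusted lift $[a_1,\dots,a_n]$ lies in $\tilde S_n$ you need $v(1),\dots,v(n)$ to occupy distinct residue classes modulo $n$, which follows from injectivity of $v$ on $\bbZ_{kn}$ together with $v(j+n)=v(j)+n$ by the same argument you already used to show $\sigma\in S_n$ for elements of $\tilde S_n$; worth a half-sentence, but not a gap. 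The torsion-freeness argument for part (3) and the count $|Q/kQ|=k^{n-1}$ for part (4) are both clean and correct.
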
 
\begin{proof}
Easy to verify.
\end{proof}
\begin{corollary}
$({\tilde{S}_{n}}/N,(K_{\{i\}} N/N)_{i\in I})$ is a finite subgroup geometry system and $X_{\tilde{S}_{n}(k)} \cong X_{{\tilde{S}_{n}}/N}$ has the same links as $X_{\tilde{S}_{n}}$. Thus, 
$$\{\bigcircledast_{\pi \in S_I} \pi\big(X_{{\tilde{S}_{n}}(k)}\big) : k\in \bbN\}$$ 
is an infinite family of strongly gallery connected $n$-dimensional HRG.
\end{corollary}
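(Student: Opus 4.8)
The plan is to obtain the statement by assembling earlier results, with no new ideas required. First, Proposition~\ref{lem:affine}(1) applied with $J=I$ shows that $(\tilde{S}_n,(K_{\{i\}})_{i\in I})$ is a subgroup geometry system. The proposition immediately preceding this corollary provides a normal subgroup $N=\ker\varphi$ of $\tilde{S}_n$ with $N\cap K_{\{i\}}=\{e\}$ for every $i\in I$ and with $\tilde{S}_n/N\cong\tilde{S}_n(k)$ finite of order $k^{n-1}n!$. Hence Lemma~\ref{lem:quo} applies: $(\tilde{S}_n/N,(K_{\{i\}}N/N)_{i\in I})$ is a finite subgroup geometry system, $X_{\tilde{S}_n(k)}\cong X_{\tilde{S}_n/N}$ is a finite simplicial complex, and---the point we really use---it has the same links as $X_{\tilde{S}_n}$. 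This proves the first assertion of the corollary.

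For the ``Thus'' part, I would invoke the corollary of Section~\ref{sec:sgs} stating that for every subgroup geometry system the complex $X^{\circledast S_I}$ is a strongly gallery connected HRG. (That corollary is itself an assembly of the theorem that subgroup geometry systems are partite type-regular, Claim~\ref{cla:ptr}, and the preservation of strong gallery connectivity under $\circledast$.) Applied to $(\tilde{S}_n/N,(K_{\{i\}}N/N)_{i\in I})$, it shows that $\bigcircledast_{\pi\in S_I}\pi(X_{\tilde{S}_n(k)})$ is a strongly gallery connected hyper-regular complex, of dimension independent of $k$. Moreover, the degree tuple produced by the symmetrization is a function only of the link sizes of $X_{\tilde{S}_n(k)}$---see the computation in the proof of Claim~\ref{cla:ptr}, where the link size of an $m$-clique in $G^{\circledast S_I}$ is written purely in terms of the partite type-regularity numbers $d_J(i)$ of $G$. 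Since those links coincide with the links of $X_{\tilde{S}_n}$ for every $k$ by Lemma~\ref{lem:quo}, there is a single tuple $(d_0,\ldots,d_{n-1})$ with $0<d_{n-1}<\ldots<d_0$ such that every member of the family is $(d_0,\ldots,d_{n-1})$-regular.

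It remains to verify that the family is infinite, i.e.\ that the vertex count is unbounded. The vertices of $X_{\tilde{S}_n(k)}$ are the cosets of the $n$ subgroups $K_{\{i\}}N/N$ in $\tilde{S}_n/N$; since $K_{\{i\}}\cap N=\{e\}$ we have $|K_{\{i\}}N/N|=|K_{\{i\}}|$, and by Proposition~\ref{lem:affine}(2) $K_{\{i\}}\cong S_n$, so $|K_{\{i\}}N/N|=n!$. Thus each of the $n$ colour classes contributes $|\tilde{S}_n/N|/n!=k^{n-1}$ vertices, so $X_{\tilde{S}_n(k)}$ has $n\,k^{n-1}$ vertices. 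After symmetrization the $i$-th colour class is a product over the $n!$ permutations $\pi\in S_I$ of sets each of size $k^{n-1}$, so $\bigcircledast_{\pi\in S_I}\pi(X_{\tilde{S}_n(k)})$ has $n\,k^{(n-1)\cdot n!}$ vertices; in particular this tends to $\infty$ as $k\to\infty$ (using $n\ge 2$). This yields an infinite family of strongly gallery connected hyper-regular simplicial complexes of fixed dimension, as claimed.

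I do not anticipate a genuine obstacle: the corollary is essentially bookkeeping on top of Proposition~\ref{lem:affine}, Lemma~\ref{lem:quo}, and the symmetrization machinery of Section~\ref{sec:sgs}. The only place that needs an actual (short) argument rather than a citation is the unboundedness of the vertex count, which follows from the index computation $[\tilde{S}_n/N:K_{\{i\}}N/N]=k^{n-1}$ above, together with the observation that equality of links forces the hyper-regular degree tuple to be the same for all $k$.
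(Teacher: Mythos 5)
Your proposal is correct and follows exactly the route the paper intends: the paper's entire proof of this corollary is the single citation ``Lemma~\ref{lem:quo}'', and your argument just makes explicit the same assembly of Proposition~\ref{lem:affine}, the quotient construction, and the symmetrization corollary of Section~\ref{sec:sgs}. The extra bookkeeping you supply (the index computation $[\tilde{S}_n/N:K_{\{i\}}N/N]=k^{n-1}$ giving unbounded vertex count, and the observation that equal links force a $k$-independent degree tuple) is accurate and fills in details the paper leaves implicit.
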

\begin{proof}
Lemma \ref{lem:quo}.
\end{proof}

\subsubsection{Group of symmetries of a triangulation of $\bbR^{n-1}$}
In this subsection we describe a realization of an affine Coxeter group of type  $\tilde{A}_{n}$ as a group of symmetries. Although the resulting group is the same, this is not the traditional set of symmetries usually used to generate the group.
\begin{figure}
\centering
\subfigure[Labeling simplices and coloring vertices, where $(0,1,2)=(blue, green, yellow)$.]{\includegraphics[scale=0.72]{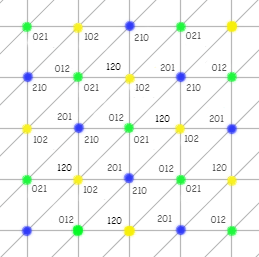}}
\hfill
\subfigure[The cosets of $F_2$; The arrows are the action of $f_1$.]{\includegraphics[scale=0.7]{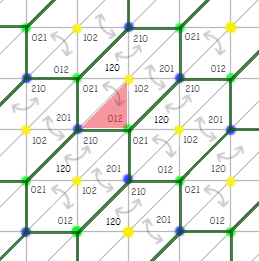}}
\label{fig:clrb}
\hfill
\subfigure[The resulting graph for $n=3$: $(6,2)$-regular.]{\includegraphics[scale=0.35]{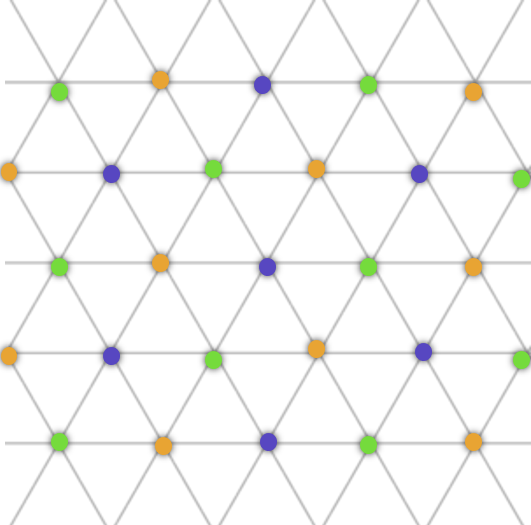}}
\caption{Demonstration on $\bbR^2$}
\label{fig:clr}
\vspace{1cm}
\end{figure}

Let us first specify the set that our group acts on. For $n> 2$:
\begin{enumerate}
\item Triangulate $[0,1]^{n-1}$ by the simplices 
$\Delta_\sigma = \{x: x_{\sigma(1)}\geq x_{\sigma(2)}\geq \ldots\geq x_{\sigma(n-1)} \}$ for every $\sigma$ in the permutation group $S_{n-1}$ (i.e., each simplex corresponds to an increasing path on the cube structure from $0^{n-1}$ to $1^{n-1}$), and triangulate the whole space by the integral shifts of this triangulation. 
\item Coloring each vertex $x=(x_1, x_2, \ldots, x_{n-1})\in \bbZ^n$ by $\sum_{i}x_i \mod{n}$, gives a proper coloring of the graph consisting of the 1-skeleton of this triangulation.
\item Each simplex is now naturally labeled by an element of $S_{n}$: Consider first the cube $[0,1]^{n-1}$. The label of a simplex corresponding to $\sigma\in S_{n-1}$ is $(0, \sigma(1), \sigma(2),...,\sigma(n-1))$. The label of a simplex with the same orientation, in a cube shifted by a vector $v\in \bbZ^{n-1}$ where $\sum_iv_i = c$ is $$(0+ c \mod n,\: \sigma(1)+ c\mod n,\: \sigma(2)+c \mod n,...,\:\sigma(n-1) + c \mod n)$$

\begin{figure}[H]
\centering
\subfigure[Triangulated square]{\includegraphics[scale=0.15]{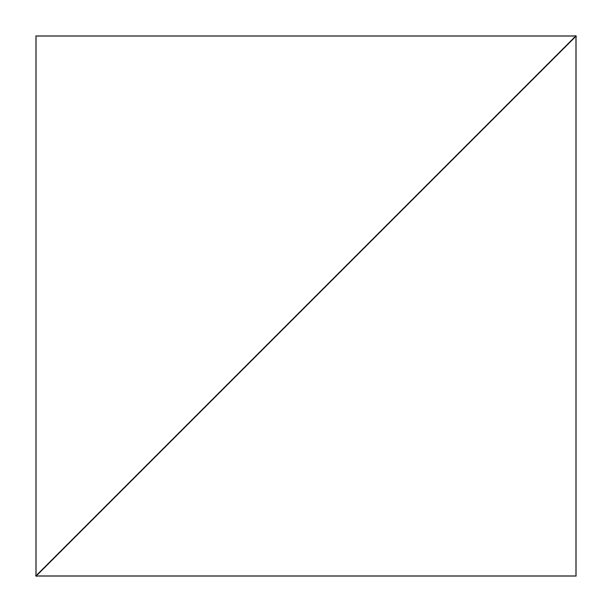}}
\hspace{3cm}
\subfigure[Triangulated cube]{\includegraphics[scale=0.3]{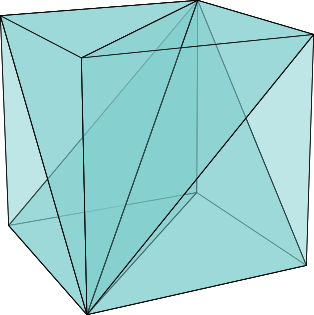}}
\end{figure}

\end{enumerate}

Let $f_i$ be the involution that swaps between simplices sharing a face not containing the vertex colored by $i$. $f_i$ is a mapping from the set of simplices to itself.\\
Define $F_i := \langle\{f_0, f_1,\ldots,f_{n-1}\}\setminus \{f_i\}\rangle$, and Let $G_{n}=  \langle\{f_0, f_1,\ldots,f_{n-1}\}\rangle$.

\begin{claim}
\begin{enumerate}
\item $f_i$ swaps between
neighbouring simplices labeled by $\sigma$ and $(i, i+1)\sigma $. 
\item $F_i\cong S_n$ for every $i$.
\item $G_{n}=  \langle\{f_0, f_1,\ldots,f_{n-1}\}\rangle$ acts simply transitively on the simplices, thus can be identified by the set of simplices where we fix a $(0,...,n-1)$-labeled simplex to be the identity (say, the red colored in figure \ref{fig:clr}.b).
\end{enumerate}
\end{claim}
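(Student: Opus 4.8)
The plan is to prove the three parts in the order (1), (3), (2), deducing (2) from the simple transitivity in (3). I would work throughout in the explicit model: every maximal simplex is uniquely $v+\Delta_\sigma$ with $v\in\bbZ^{n-1}$, $\sigma\in S_{n-1}$; its vertices form the monotone lattice path $w_0=v$, $w_k=v+e_{\sigma(1)}+\dots+e_{\sigma(k)}$, the colour of $w_k$ is $(c+k)\bmod n$ with $c=\sum_j v_j$, and its label is $\bigl(c,\,\sigma(1)+c,\,\dots,\,\sigma(n-1)+c\bigr)\bmod n$. Note that $f_i$ is a well-defined involution on maximal simplices (each has a unique facet missing colour $i$, and each facet lies in exactly two maximal simplices because we are triangulating the manifold $\bbR^{n-1}$), so $G_n\le\mathrm{Sym}$ of the set of maximal simplices makes sense; ``the simplices'' in (3) is meant to be this set.

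For (1): in $S=v+\Delta_\sigma$ the colour-$i$ vertex is $w_{k^\ast}$ with $k^\ast\equiv i-c\pmod n$, and $f_i(S)$ is obtained by reflecting across the facet of $S$ opposite $w_{k^\ast}$. I would split according to the position of $w_{k^\ast}$ on the path. If $1\le k^\ast\le n-2$, that facet is interior to the cube $[v,v+\mathbf 1]$ and $f_i(S)=v+\Delta_{\sigma'}$, where $\sigma'$ is $\sigma$ with its $k^\ast$-th and $(k^\ast{+}1)$-st steps transposed; a one-line substitution into the label formula shows the label is multiplied by the transposition $(i,\,i{+}1\bmod n)$. If $k^\ast\in\{0,n-1\}$, the facet lies on a facet of the cube, $f_i(S)$ is the simplex across it in the adjacent cube, and one checks from the staircase picture that this cyclically rotates the path while shifting $c$ by $\pm1$; substituting again, one verifies the same multiplication rule, the mod-$n$ wraparound of the transposition absorbing the level shift. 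This case distinction — and in particular the inter-cube case, where one must get the rotation, the $\pm1$ shift, and the side of the multiplication exactly right — is the main technical point; everything afterwards is soft.

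For (3) and (2): I would check that the chamber system of the triangulation is thin (done above) and that its rank-$2$ residues have the right sizes — the link of an $(n{-}2)$-face is a $6$-cycle when its two missing colours are adjacent mod $n$ and a $4$-cycle otherwise, a finite local computation — so that it is the Coxeter complex of the affine symmetric group $\tilde S_n$ (a type $\tilde A_{n-1}$ affine Coxeter group), with facet-adjacency of type $i$ being exactly ``shares the facet missing colour $i$''. Then $f_i$ is the canonical type-$i$ adjacency involution, $G_n=\langle f_0,\dots,f_{n-1}\rangle\cong\tilde S_n$, and under the resulting identification of maximal simplices with $\tilde S_n$ (sending the $(0,\dots,n-1)$-labelled simplex to $e$) the $f_i$ act by right multiplication by the Coxeter generators $\tilde s_i$; by the standard theory of Coxeter complexes this action is simply transitive, which is (3). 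By Proposition \ref{lem:affine}, $\langle\tilde s_j:j\ne i\rangle=K_{\{i\}}\cong S_n$; since under the identification $F_i=\langle f_j:j\ne i\rangle$ is exactly right multiplication by $K_{\{i\}}$, we get $F_i\cong S_n$, which is (2). (One can also see (2) directly: by (1) the $f_j$ with $j\ne i$ act on labels by the transpositions $(j,\,j{+}1\bmod n)$, which are all but one edge of the $n$-cycle on $\{0,\dots,n-1\}$, hence generate $S_n$, so $F_i\twoheadrightarrow S_n$; and each such $f_j$ fixes the colour-$i$ vertex of the base chamber, so $F_i$ acts on the $n!$ maximal simplices through it, freely and transitively by (3), forcing $|F_i|=n!$.)
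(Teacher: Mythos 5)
Your proposal is correct, and at its core it follows the same route as the paper: the paper's ``proof'' of this claim is a one-line deferral to the subsequent paragraph (``Connection to the affine permutation group''), where the simplices are relabelled so that each $f_i$ visibly acts as the Coxeter generator $\tilde s_i$ of $\tilde S_n$, and simple transitivity is then the regular action of a Coxeter group on its own chambers. The differences are in execution. The paper does the computation for part (1) in the stair-lattice coordinates $\bbA_0\cup\dots\cup\bbA_{n-1}\subset\bbZ^n$, where colour coincides with position along the monotone path, so no case split is needed and only $f_0$ requires a separate check; you work in the cube coordinates of $\bbR^{n-1}$, which forces the interior-facet/boundary-facet dichotomy and the bookkeeping of the offset $c$ that you correctly flag as the delicate point. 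For (3) you add a chamber-system argument (thinness plus the $C_4$/$C_6$ rank-$2$ residues), and for (2) you supply both the parabolic-subgroup identification via Proposition \ref{lem:affine} and a self-contained counting argument ($F_i\twoheadrightarrow S_n$ from the transpositions, $|F_i|\le n!$ from freeness plus the fact that the orbit of the base chamber stays among the $n!$ chambers through its colour-$i$ vertex); the paper instead verifies $F_i\cong S_n$ directly from the explicit action. Your version is more detailed than the paper's and, in the counting argument for (2), arguably more robust.

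One small caveat: ``thin, connected, with the correct rank-$2$ residues'' does not by itself identify a chamber system as the Coxeter complex of type $\tilde A_{n-1}$ --- finite torus quotients of this very triangulation satisfy the same local conditions --- so that step needs either simple connectivity of $\bbR^{n-1}$ (Tits' local-to-global principle) or, more simply, the explicit label bijection between maximal simplices and $\tilde S_n\cong S_n\ltimes\bbA_0$ that you set up anyway, under which the $f_i$ become right multiplications and simple transitivity is immediate. Since you have that bijection in hand, this is a presentational gap rather than a mathematical one.
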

\begin{proof}
Follows from the next paragraph, where we show that this group is essentially the affine permutation group, and as a Coxeter group it acts simply transitively.
\end{proof}
As a consequence, we can identify each coset with the vertex that in its center. Meaning, $G$'s vertex set can be seen as $\bbZ^n$. For example, in figure \ref{fig:clr}.b, the cosets of $F_2$ are identified by the colored 2 (yellow) vertices. Furthermore, pairs of vertices that are neighbors in the 1-skeleton of the triangulation correspond to cosets that intersect, and $i$-cliques correspond to $i$-intersection of cosets.
\paragraph{Connection to the affine permutation group}
\begin{figure}
\centering
\includegraphics[scale=0.25]{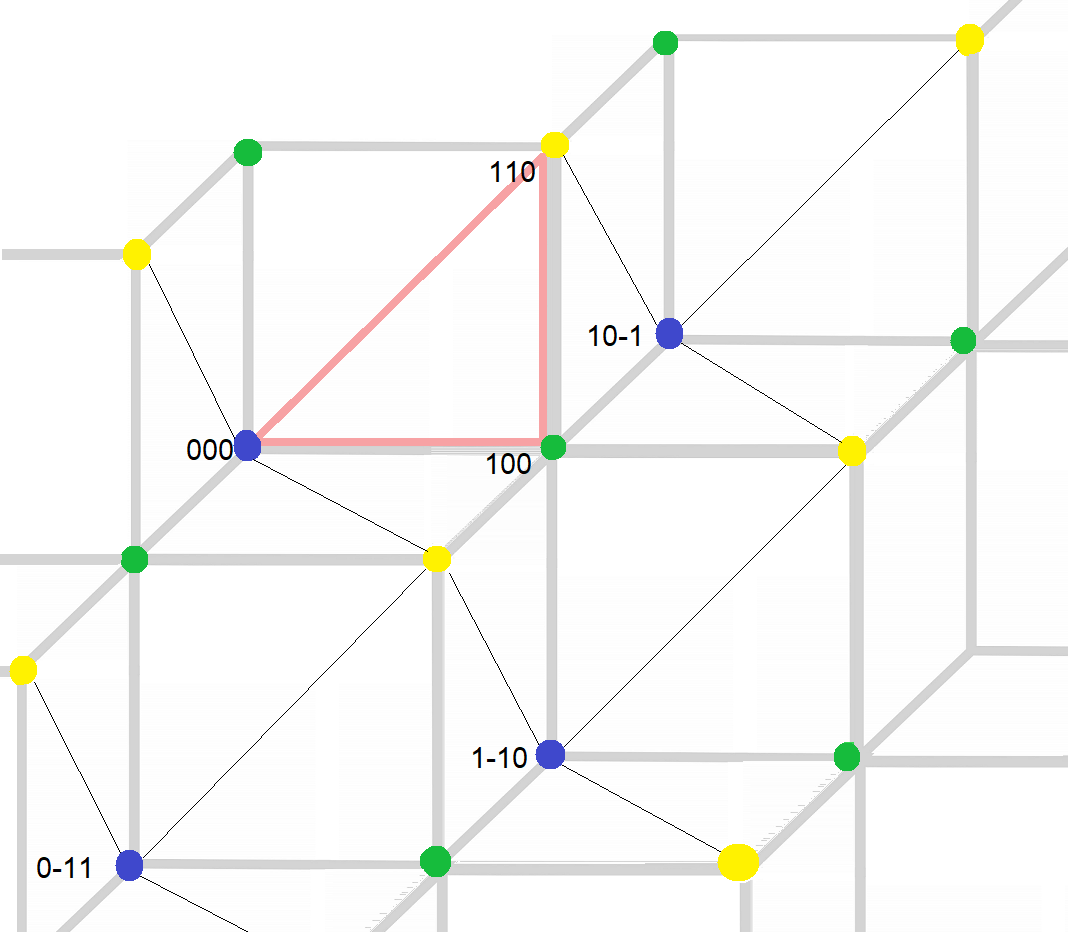}
\caption{Triangulation of the stair lattice}
\label{fig:stair}
\vspace{1cm}
\end{figure}
Let $\bbA_0 \subset \bbZ^{n}$ be the set of all integer points $a\in \bbZ^{n}$ such that $\sum_j a_j = 0$. Define similarly the lattices $\bbA_i$ for $i\in \{0,1,\ldots,n-1\}$.\\
It follows by definition (\ref{def:afper}) that every element $u\in \tilde{S}_n$ is of the form 
$$[r_1 + a_1\cdot n,\: \ldots\: , r_{n}+ a_{n} \cdot n]$$ 
where $(a_1,\ldots,a_{n})\in \bbA_0$ and $[r_1,\ldots, r_{n}]\in S_n$, i.e., $\{r_1,\ldots, r_{n}\} = \{1,\ldots, n\}$.\\
Thus it's not hard to see that $\bbA_0$ is a normal subgroup of $\tilde{S}_n$, and one has an isomorphism 
$$
\tilde{S}_n \cong S_n \rtimes \bbA_0
$$
Where the action of $S_n$ on $\bbA_0$ in the semidirect product definition is by permutation of coordinates.\\
Now, there's a natural isomorphism between the semidirect product and $G_n$. To see this clearly, consider the action of $G_n$ on the appropriate triangulation of the union of lattices $\bbA_0 \cup\ldots \cup \bbA_{n-1}$,  name it "the stair lattice". Explicitly, consider the origin $0^n$. Every permutation $\sigma\in S_n$ corresponds to a simplex which is the convex hull of the points 
$$0^n\textbf{,}\:\: 0^n + e_{\sigma(1)}\textbf{,}\:\:  0^n + e_{\sigma(1)} + e_{\sigma(2)}\textbf{,}\:\:\ldots\:\: \textbf{,}\:\: 0^n + \sum_{i=1}^{n-1} e_{\sigma(i)}$$ 
And it's easy to see that every involution $f_i$ for $ i>0$ is expressed by swapping $e_{\sigma(i)}$ with $e_{\sigma(i+1)}$, i.e., it swaps between neighbouring simplices oriented by $\sigma$ and $(i,i+ 1)\sigma$. This shows that $F_0\cong S_n$. The action of $f_0$ on a $\sigma$-oriented simplex moves $0^n$ to $0^n+ e_{\sigma(1)} - e_{\sigma(n)}$, and the simplex, whose vertices are $0^n, 0^n+e_{\sigma(1)}, \ldots, 0^n+\sum_i e_{\sigma(i)}$ to 
$$
(0^n+e_{\sigma(1)} - e_{\sigma(n)})\textbf{,}\:\: (0^n + e_{\sigma(1)} -e_{\sigma(n)}) + e_{\sigma(n)}\textbf{,}\:\: (0^n + e_{\sigma(1)} -e_{\sigma(n)}) + e_{\sigma(n)} + e_{\sigma(2)}\textbf{,}\: \ldots
$$
i.e., $f_0$ swaps between neighbouring simplices oriented by $\sigma$ and $(n,1)\sigma$. Thus, $F_i\cong S_n$ for every $i$.

See figure \ref{fig:stair} for an illustration. (That's equivalent to simply labeling the points in $\bbZ^{n-1}$ as in the stair lattice).\\

Now, label the set of simplices as follows: for every simplex with orientation $\sigma\in S_n$ around the lattice point $a\in \bbA_0$, label it by first translating $[1,2,...,n]$ by $a$ and then permuting by $\sigma$, i.e., 
$$
[\sigma(1)+a_{\sigma(1)}\cdot n,\ldots , \sigma(n)+a_{\sigma(n)}\cdot n]
$$
Now clearly, for $0<i\leq n-1$ the action of $f_i$ on the simplices coincides with the action of $\tilde{s}_i\in \tilde{S}_n$ on $\bbZ$. Finally, to see that $f_0$ coincides with $\tilde{s}_0$, consider the $[1,2,\ldots,n]$-oriented simplex around $0^n$. $f_0$ moves it to the $[n,2,\ldots,n-1,1]$-oriented simplex around the point $(1,0,\ldots,0,-1)$ (see remark below). By our definition, the label of this simplex is 
$$
[n +(-1)\cdot n, 2,\ldots,n-1, 1+1\cdot n] = [0, 2, \ldots, n-1, n+1]
$$
as expected. One can verify in a similar manner the action on other simplices.
\begin{remark}
The set of simplices can be labeled (and the affine permutation group can be represented) by any window of width $n$. Here we used $[1,\ldots,n]$ while earlier we implicitly used the window $[0,\ldots,n-1]$. A matter of convenience.
\end{remark}
\paragraph{Combinatorial description}
\label{subsub:comb}
Here we give a very simple description of the graph before the symmetrization; i.e. we describe an infinite graph whose vertices are the integer lattice. It can be made finite by restricting to tori. The skeleton of the clique complex of the resulting graph is type-regular, and thus can be symmetrized.

Following the observations above, consider again the triangulation of $\bbR^n$ as the proper graph $G$, i.e., The vertices are the lattice points and the edges are all pairs $x,y$ that lie in a common simplex. Consider the cube $[0,1]^{n-1}$. Every simplex corresponds to an increasing path from $0^{n-1}$ to $1^{n-1}$. Therefore, two points $x,y$ are neighbors if they lie in a common cube and there's an increasing path (w.r.t the cube structure) from one of them to the other; In other words, $x\sim y$ if $(x-y)\in \{0,1\}^{n-1} \cup \{0, -1\}^{n-1}$. The description in neater when considering this graph on the stair lattice: Let 
$$ \bbA = \bbA_0 \cup \ldots \cup \bbA_{n-1} \subset \bbZ^{n}
$$ 
Now, define a graph $G' = (\bbA,\:\: E'= \bigcup\limits_{0\leq i<j\leq n-1} E_{i j})$ where for every $0\leq i<j\leq n-1$
$$
E_{i j} = \big\{\{x,y\}:\: x\in \bbA_i, y\in \bbA_j; \: (y-x) \in \{0,1\}^{n} \text{  and has precisely } (j-i) \text{ 1's} \big\}
$$
For a finite graph, one can take the stair lattice of the torus $\bbZ^{n}_{n\cdot k}$.
A small variation of this (example \ref{subsub:3r}) yields a $(2\binom{3r}{r},\binom{2r}{r})$-regular graphs for every integer $r$.
\begin{example}[Graph degree]
Let $x\in V_0$. $x$ has $\binom{n}{j}$ neighbors in $V_j$ (as this is the number of ways to add $j$ 1's to $x$), thus 
$$deg(x) = \sum_{j=1}^{n-1} \binom{n}{j}= 2^{n}-2$$.
\end{example}

\subsubsection{Other Coxeter constructions}
\begin{figure}
\centering
\includegraphics[scale=0.6]{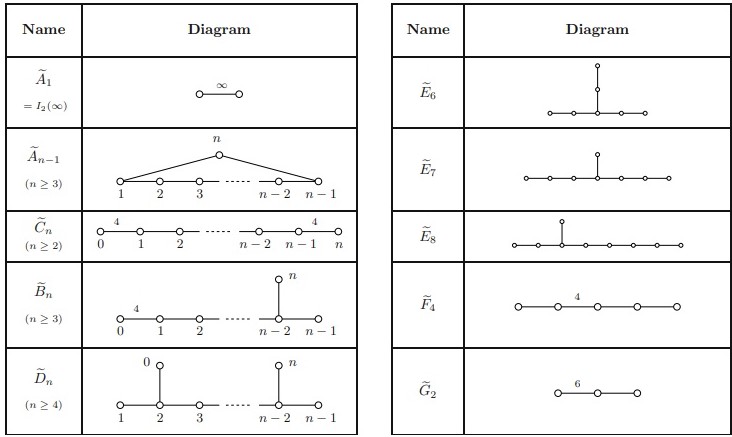}
\caption{The affine irreducible Coxeter systems}
\label{fig:diag}
\vspace{1cm}
\end{figure}
The type $\tilde{A}_n$ Coxeter construction of subgroup geometry system is a special case that can be generalized: Let $\langle W, S \rangle$ be a Coxeter system \cite{coxbook}, where $W$ is a group and $S = \{s_0,s_1,\ldots,s_n\}$ is a set of generators of $W$. Define for every $i\in \{0,1,..,n\}$, $K_{\{i\}}:= \langle\{s_j: j\neq i\} \rangle$.
\begin{claim}
$(W, (K_{\{i\}})_{i\in \{0,\ldots,n\}}))$ is a subgroup geometry system.
\end{claim}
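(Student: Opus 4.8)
The plan is to verify the three axioms (A1), (A2), (A3) for an arbitrary Coxeter system $\langle W, S\rangle$ with $S = \{s_0,\ldots,s_n\}$ and $K_{\{i\}} = \langle s_j : j\neq i\rangle$. The essential input is the classical theory of \emph{parabolic subgroups} of Coxeter groups: for $J\subseteq S$, write $W_J = \langle J\rangle$; then the map $J\mapsto W_J$ is an order-preserving injection, and one has the two fundamental identities $W_J \cap W_{J'} = W_{J\cap J'}$ and $\langle W_J, W_{J'}\rangle = W_{J\cup J'}$ (see, e.g., Bourbaki or Humphreys). With the indexing convention that $K_\tau = \bigcap_{i\in\tau}K_{\{i\}}$ corresponds to the generating set $S\setminus\tau$, these translate directly into what we need.

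First I would record the dictionary: for $\tau\subseteq I = \{0,\ldots,n\}$ we have $K_\tau = W_{S\setminus\tau}$, where $S\setminus\tau$ means $\{s_j : j\notin\tau\}$; in particular $K_\emptyset = W_S = W = G$ and $K_{\{i\}} = W_{S\setminus\{i\}}$. Then (A1) asks that $K_{\tau\cap\tau'} = \langle K_\tau, K_{\tau'}\rangle$, i.e. $W_{S\setminus(\tau\cap\tau')} = \langle W_{S\setminus\tau}, W_{S\setminus\tau'}\rangle$; since $S\setminus(\tau\cap\tau') = (S\setminus\tau)\cup(S\setminus\tau')$, this is exactly the identity $\langle W_J, W_{J'}\rangle = W_{J\cup J'}$ with $J = S\setminus\tau$, $J' = S\setminus\tau'$. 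For (A3), we need $K_I \neq K_{I\setminus\{i\}}$ for each $i$, i.e. $W_\emptyset \neq W_{\{s_i\}}$, i.e. $\{e\}\neq \langle s_i\rangle$; this holds because in a Coxeter system every generator $s_i$ has order exactly $2$, so $\langle s_i\rangle$ is nontrivial.

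The main work is (A2): for $\tau\subsetneq I$ and $i\in I\setminus\tau$, one must show $K_\tau K_{\{i\}} = \bigcap_{j\in\tau} K_{\{j\}}K_{\{i\}}$, equivalently $W_J W_{\{s_i\}} = \bigcap_{j\in\tau} W_{S\setminus\{j\}} W_{\{s_i\}}$ where $J = S\setminus\tau$. The inclusion $\subseteq$ is immediate since $W_J\subseteq W_{S\setminus\{j\}}$ for each $j\in\tau$. For $\supseteq$, I expect to invoke the theory of double cosets $W_{J'}\backslash W / W_{J''}$ with respect to parabolic subgroups: each such double coset has a unique minimal-length representative, and more to the point, for products of parabolics one has $W_{J'}W_{J''} = \{w : w \text{ has a reduced expression using letters in } J'\cup J''\text{ with a prescribed shape}\}$. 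Concretely, the cleanest route is: an element $w$ lies in $W_{J'}W_{\{s_i\}}$ (with $s_i\notin J'$) iff $w\in W_{J'\cup\{s_i\}}$ and the reduced word for $w$ in $W_{J'\cup\{s_i\}}$ contains at most one occurrence of $s_i$. Then if $w\in W_{S\setminus\{j\}}W_{\{s_i\}}$ for every $j\in\tau$, we get $w\in W_{S\setminus\{j\}}$ for all such $j$ — wait, not quite, since $s_i$ may appear. Rather: $w\in W_{(S\setminus\{j\})\cup\{s_i\}} = W_{S\setminus(\tau\setminus\{j\})}$... the bookkeeping needs care. The key lemma I would isolate and prove is: if $w\in W_{J'\cup\{s\}}$ with $s\notin J'$ and the number of $s$'s in a (hence any) reduced word of $w$ is $\le 1$, then $w\in W_{J'}W_{\{s\}}$; combined with $\bigcap_{j\in\tau}W_{S\setminus\{j\}} = W_{\bigcap_j(S\setminus\{j\})} = W_{S\setminus\tau} = W_J$ (using the intersection identity for parabolics), this should close the argument. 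This step — controlling the interaction of the product $K_\tau K_{\{i\}}$ with intersections via reduced-word / double-coset combinatorics — is the main obstacle; everything else is formal manipulation of the parabolic dictionary. It is worth noting that Proposition~\ref{lem:affine} already asserts the intersection identity in the affine case citing \cite{cohen}, and the general statement is standard, so I would cite the Coxeter-group literature for it rather than reprove it, and concentrate the written proof on (A2).
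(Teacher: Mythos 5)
There is a genuine gap, and it sits exactly where you predicted: in (A2). Your treatment of (A1) and (A3) is fine and matches the paper in substance (the paper simply cites Theorem 5.2 of \cite{cohen} for (A1) and (A2) and notes that (A3) is obvious because $K_I=\{e\}$ while $K_{I\setminus\{i\}}=\langle s_i\rangle$), granting the standard intersection property $W_J\cap W_{J'}=W_{J\cap J'}$ for parabolics, which you correctly flag as a citable classical fact. But your reduction of (A2) rests on a mistranslation of the dictionary you yourself set up. Since $K_{\{i\}}=\langle s_j: j\neq i\rangle=W_{S\setminus\{s_i\}}$ is the \emph{maximal} standard parabolic omitting $s_i$ (as you correctly record in your ``dictionary'' paragraph), the identity (A2) in parabolic language is
$$
W_{S\setminus\{s_j:\,j\in\tau\}}\cdot W_{S\setminus\{s_i\}}\;=\;\bigcap_{j\in\tau}\Bigl(W_{S\setminus\{s_j\}}\cdot W_{S\setminus\{s_i\}}\Bigr),
$$
a statement about products of a parabolic with a maximal parabolic. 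What you go on to analyze is instead $W_JW_{\{s_i\}}=\bigcap_{j\in\tau}W_{S\setminus\{s_j\}}W_{\{s_i\}}$, where $W_{\{s_i\}}=\{e,s_i\}$ has order $2$; this is a different (and much weaker-looking) assertion, and your proposed key lemma about reduced words containing at most one occurrence of $s_i$ is aimed at that wrong target. On top of the mistranslation, the combinatorial lemma that is supposed to carry the $\supseteq$ inclusion is not proved, and you explicitly note that the bookkeeping does not close (``wait, not quite''). So the hardest axiom is neither correctly stated in your chosen language nor established. The paper avoids all of this by invoking Theorem 5.2 of \cite{cohen}, which packages precisely the required product-and-intersection identities for standard parabolics of a Coxeter system; if you want a self-contained argument rather than a citation, you would need to redo the (A2) analysis for products of two parabolics $W_JW_{J'}$ (e.g.\ via minimal double-coset representatives for $W_J\backslash W/W_{J'}$), not for $W_JW_{\{s_i\}}$.
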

\begin{proof}
 (A1), (A2) follows from theorem 5.2 in \cite{cohen}. (A3) is obvious since $K_I = \{e\}$ and $K_{I\setminus \{i\}} = \langle \{s_i\}\rangle$. (follows as well from theorem 5.2 in \cite{cohen}).
\end{proof}

By taking affine Coxeter systems (which are infinite) we can generate infinite families of HRG in a similar way, since each contains a normal abelian subgroup such that the corresponding quotient group is finite.\\
Unfortunately, non of these subgroup geometry systems have good expansion in their $1$-dimensional links: It can be seen in figure $\ref{fig:diag}$
(A Coxeter diagram encodes the corresponding Coxeter system: The vertices of a diagram are the group's generators, which are all of order 2; There's an unlabeled edge between $g,h$ if $gh$ is of order $3$; There's a labeled edge by $j\geq 4$ between $g,h$ if $gh$ is of order $j$). Note that every system has $S_{3}$ as subgroup, spanned by two adjacent generators - i.e., in every similar Coxeter construction we will have $C_6$ as a 1-dimensional link, and, as mentioned in \ref{par:afexp} this implies that the upper bound on the expansion parameter obtained by \ref{lem:trick} is trivial.

\subsection{Ad-hoc constructions}
\label{sub:ad}
Throughout this sub-subsection, we use the groups presented earlier and their subgroups as building blocks. In order to generate infinite family from a certain construction, one has to take an appropriate quotient (as seen earlier) and apply (in the relevant constructions) the $\circledast$ product on the resulting finite graphs. 

\subsubsection{A $(2\binom{3r}{r}, \binom{2r}{r})$-regular family}
\label{subsub:3r}
Here is a family of  $(a,b)$-graphs whose links are (a bipartite version of) a Kneser graph.
Based on the combinatorial description in \ref{subsub:comb}: Let $r \geq 1, k\leq 1$.  
Define $G' = (V'= V_0\cup V_r \cup V_{2r},\: E')$ where
$$
V_j = \{x\in \bbZ_{3r\cdot k}^{3r}: \sum_i x_i = j \}
$$
and
$$
E' = \big\{\{x,y\}: (x-y) \in \{0,1\}^{3 r} \text{  and has precisely r 1's} \big\}
$$
Consider for example a vertex $x\in V_r$. There are $\binom{3r}{r}$ ways of adding $r$ 1's to $x$ to reach some vertex in $V_{2r}$, and there are $\binom{3r}{r}$ ways of subtracting $r$ 1's to reach $V_0$. So, the degree of each vertex is $2\binom{3r}{r}$, and similarly it's easy to see that the degree of the links is $\binom{2r}{r}$. Furthermore, the induced graph on the neighbourhood of a vertex is the bipartite Kneser graph $K_2\otimes K(3r, r)$ (Unfortunately, it doesn't have good expansion properties, since the graph $K(3r,r)$ is $m$ regular, with second eigenvalue precisely $m/2$, where $m =\binom{2r}{r}$).

\subsubsection{Improving the blow up}
One can notice that the use in the full permutation group in the symmetrization causes a massive blow up in the degrees. Ideally, given a partite type-regular graph, we would want to make the least possible number of product action to make it hyper-regular. Actually, for an arbitrary partite type-regular graph, by the way of the proof of \ref{cla:ptr}, it follows that the permutation group only has to be set transitive:

\begin{definition}[i-set transitive]
A group $H\leq S_n$ is i-set transitive if for every pair of subsets $S,T$ of $N=\{1,2,\ldots,n\}$. each containing i elements, there exists a permutation in $H$ which carries $S$ into $T$.
\end{definition}

\begin{definition}[set transitive]
A group $H\leq S_n$ is set transitive if $H$ is i-transitive for every $1\leq i \leq n-1$.
\end{definition}
Clearly the symmetric group $S_n$ is set-transitive, and the alternating group $A_n$ is set-transitive for $n > 2$. 
However, it turns out that, except for a finite number of exceptions, these are the only examples. 
R.  A.  Beaumont  and  R.  P.  Peterson proved in \cite{peterson} the following:

\begin{theorem}
A group $H \leq S_n$ on n symbols, which does not contain the alternating
group $A_n$ is not set-transitive, with the
exceptions of n = 5, 6, and 9. 
\end{theorem}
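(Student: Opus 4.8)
The plan is to first translate set-transitivity into the language of homogeneity, then split the argument into a ``large $n$'' part, handled by the classification of highly transitive groups, and a ``small $n$'' part, handled by direct inspection.

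\textbf{Step 1: Reformulation.} Complementation $S\mapsto N\setminus S$ is an $H$-equivariant bijection between $i$-subsets and $(n-i)$-subsets of $N=\{1,\dots,n\}$, so $H$ is $i$-set transitive iff it is $(n-i)$-set transitive. Hence $H$ is set-transitive iff $H$ is $i$-homogeneous (transitive on $i$-subsets) for every $1\le i\le \lfloor n/2\rfloor$. For $n\le 4$ one has $\lfloor n/2\rfloor\le 2$, and a direct look at the few transitive groups of degree $\le 4$ shows $S_n,A_n$ are the only set-transitive ones. So assume $n\ge 5$; the goal is to show a set-transitive $H\le S_n$ with $A_n\not\le H$ forces $n\in\{5,6,9\}$, and to exhibit the examples there.

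\textbf{Step 2: Primitivity and the case $n\ge 10$.} First, $2$-homogeneity forces primitivity: if $\Delta$ were a nontrivial block of imprimitivity ($2\le|\Delta|<n$), then a pair contained in $\Delta$ and a pair meeting $\Delta$ in exactly one point would lie in different $H$-orbits on $2$-subsets, a contradiction. Now let $n\ge 10$, so $t:=\lfloor n/2\rfloor\ge 5$ and $n\ge 2t$. By the Livingstone--Wagner theorem, $t$-homogeneity with $t\ge 5$ and degree $\ge 2t$ forces a high degree of ordinary transitivity; in particular $H$ is at least $4$-transitive. The classification of $4$-transitive groups (a consequence of CFSG) leaves only $S_m,A_m,M_{11},M_{12},M_{23},M_{24}$, and for each Mathieu group a one-line divisibility check shows $\binom{m}{\lfloor m/2\rfloor}$ does not divide its order, so none of them is $\lfloor m/2\rfloor$-homogeneous and hence none is set-transitive in its natural degree. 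Therefore $A_n\le H$, a contradiction, and there are no exceptions for $n\ge 10$. (A CFSG-free route, in the spirit of Beaumont--Peterson, instead picks a prime $p$ with $\lceil n/2\rceil<p\le n-3$: then $p\mid\binom{n}{\lfloor n/2\rfloor}$ so $p\mid|H|$, and since $2p>n$ the corresponding element is a single $p$-cycle fixing $\ge 3$ points, whence Jordan's theorem gives $A_n\le H$. This works for every $n$ admitting such a prime; the finitely many $n$ for which no such prime exists---notably $n=9$---are precisely the degrees one must treat by the quantitative bounds on orders of primitive groups used in Step 3.)

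\textbf{Step 3: The cases $5\le n\le 9$.} Here $\lfloor n/2\rfloor\le 4$, and one invokes the classifications of $2$-, $3$- and $4$-homogeneous groups: for each $n$ list the candidate homogeneous groups of degree $n$ and retain only those that are simultaneously $i$-homogeneous for all $i\le\lfloor n/2\rfloor$. This yields exactly: for $n=5$, the Frobenius group $\mathrm{AGL}(1,5)$ of order $20$ (it is $2$-transitive, hence $2$-homogeneous, hence by complementation $3$- and $4$-homogeneous); for $n=6$, $\mathrm{PGL}(2,5)\cong S_5$ acting $3$-transitively on the projective line over $\mathbb{F}_5$; for $n=7$ and $n=8$, nothing (every $3$-, resp.\ $4$-homogeneous group of that degree already contains $A_n$); and for $n=9$, $\mathrm{PSL}(2,8)$ and $\mathrm{P\Gamma L}(2,8)$ on the projective line over $\mathbb{F}_8$, which are $3$-transitive and (via the cross-ratio count over $\mathbb{F}_8$) $4$-homogeneous on $9$ points. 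These are the claimed exceptions, and there are no others.

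\textbf{Where the difficulty lies.} The conceptual weight sits in Step 2: the clean version rests on the classification of multiply transitive groups (ultimately CFSG), whereas a self-contained proof must extract a contradiction from quantitative estimates on orders of primitive groups together with Jordan's theorem, and this is exactly delicate for the degrees near $5,6,9$. Step 3 is routine in spirit but is where careful bookkeeping is needed to recover precisely $n=5,6,9$ and the specific exceptional groups.
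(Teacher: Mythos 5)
The paper does not prove this statement at all: it is imported verbatim from Beaumont and Peterson \cite{peterson} and used as a black box to justify replacing $S_n$ by smaller set-transitive groups in the symmetrization, so there is no in-paper proof to compare against. Judged on its own, your argument is essentially correct and is a legitimate (modern) proof, but it takes a very different route from the cited 1955 one: you lean on Livingstone--Wagner plus the CFSG classification of $4$-transitive groups for $n\ge 10$ and on the classification of $2$-, $3$-, $4$-homogeneous groups for $5\le n\le 9$, whereas Beaumont--Peterson argue elementarily via divisibility of $\binom{n}{i}$ into $|H|$, existence of suitable primes, and Jordan-type theorems --- closer in spirit to your parenthetical aside. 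Two small points to tighten. First, in that aside the set of degrees admitting no prime $p$ with $\lceil n/2\rceil<p\le n-3$ is not just $n=9$; it includes $n=5,6,7$ and also $n=13$ (no prime in $(7,10]$), so the claim that these are ``precisely'' the degrees needing separate treatment, with $9$ as the only notable one, is off --- in a CFSG-free proof $n=13$ must also be handled by other means. Second, Step 3 is stated largely by invocation; for the record the conclusions are right (for $n=7,8$ a bare divisibility check $\binom{7}{3}\nmid |H|$, resp.\ $5\nmid |H|$ for every $2$-transitive $H\not\ge A_8$, already suffices, and your cross-ratio computation over $\bbF_8$ correctly certifies that $PSL(2,8)=PGL(2,8)$ and $P\Gamma L(2,8)$ are $4$-homogeneous on $9$ points). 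The heavy classification input is the price of the clean structure; the original proof avoids it at the cost of more delicate case analysis near $n=5,6,9$.
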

In general, without further assumptions on the graph symmetries, the best we can do is to use $A_n$, which is of order $\frac{n!}{2}$. However, if one seeks to build an $(a,b,c,d)$-regular graph with reasonable degree blow up, for $n=5$ there exists a set transitive permutation group of order 10, for example:
$$
H= \langle (12345), (12)(35) \rangle
$$
(Note that $|S_5|= 120, |A_5| = 60$).

\subsubsection{Exploiting group structure}
\label{sec:ad}
Our two main examples happen to have a rich automorphism group. Let's focus on $G=EL_{n+1}(\cR)$. 
Kaufman and Oppenheim showed (Theorem 3.10 in \cite{kaufman2017simplicial}) that there's an embedding $\Gamma \xhookrightarrow{} Aut(G)$ where $\Gamma\cong D_{n+1}$ (the Dihedral group on $n+1$ elements), which acts on the set of subgroups as follows
$$
\forall \gamma\in D_{n+1}, \: \gamma.K_{\{i\}} = K_{\{\gamma(i)\}}
$$
In particular, since $D_{n+1} < Aut(G)$, for every $\tau \subset I,\: \gamma\in D_{n+1}$: $K_{\tau} \cong K_{\gamma(\tau)}$.
\begin{remark}
The above holds for type $\tilde{A}_n$ Coxeter group as well and follows from its cyclic diagram (figure \ref{fig:diag}). 
\end{remark}
We saw in \ref{cla:tran} that a subgroup geometry system graph is hyper-regular if for every $m\in [n+1]$, every two intersections of $m$ different subgroups are isomorphic. In other words, it's sufficient that $Aut(G)$ contains a subgroup that acts set-transitively on $\{K_{\{i\}}: i\in I\}$.

\begin{example}
For $n=2$, $D_3 \cong S_3$, and indeed, both of our examples (\ref{sec:elem}, \ref{sec:triang}) yield $(a,b)$-regular graphs without additional symmetrization. In \ref{sec:triang}, for $n=2$ the construction yielded a $(6,2)$-regular graph.
\end{example}

Note that if $D_{n+1}\leq Aut(G)$ and acts on $\{K_{\{i\}}: i\in I\}$ as on $I$, then $D_{n+1} \times D_{n+1} \rtimes S_2 \leq Aut(G\times G)$ and acts on $\{K_{\{i\}}: i\in I\}\times \{K_{\{i\}}: i\in I\}$ as on $I\times I$, and so on.\\
This leads us to the following question: Given $n>1$, we ask what is the smallest $m>0$ for which there exists $n+1$ subgroups in $\{K_{\{i\}}: i\in I\}^{m} \leq G^m$, such that  
$$
D_{n+1}^{m}  \rtimes S_m \leq Aut(G^m) 
$$
acts set transitively on them. Here are some ad hoc examples:
\begin{itemize}
    \item $n = 3$: ($m=3$)
$$
    K_0 \times K_0 \times K_0
$$ $$
    K_1 \times K_2 \times K_3
$$ $$
    K_2 \times K_3 \times K_1
$$ $$    
    K_3 \times K_1 \times K_2
$$
I.e., 3 permutations ($Id, (1,2,3), (1,3,2)$) instead of $4!=24$.
    \item $n = 4$: ($m=2$)
    \begin{figure}
    \centering
    \includegraphics[scale=0.3]{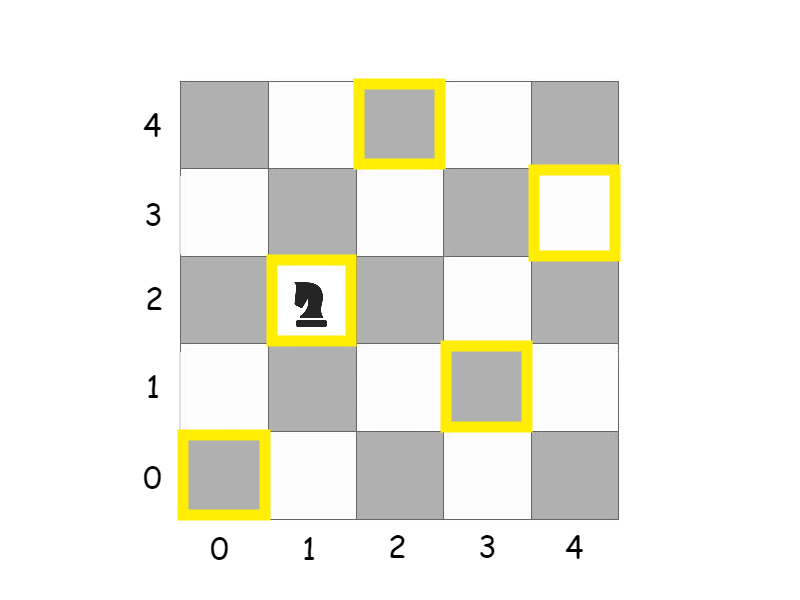}
    \caption{A knight-cycle on the $5\times 5$ torus of length 5. All subsets of same size of the five points are equivalent, enabling a simple construction of  link-connected $(200,36,12,4)$-graphs.}
    \label{fig:cycle}
    \vspace{1cm}
    \end{figure}
$$
    K_0 \times K_0
$$ $$
    K_1 \times K_2 
$$ $$
    K_2 \times K_4
$$ $$    
    K_3 \times K_1
$$ $$
    K_4 \times K_3 
$$

I.e., 2 permutations ($Id, (1,2,4,3)$) instead of $5!=120$.
\end{itemize}
Note that a group is i-set transitive if and only if it's $n-i$ set transitive. It's easy to verify that $D_{5} \times D_{5} \rtimes S_2$ acts 1-set and 2-set transitively on $\{{(0,0), (1,2), (2,4), (3,1), (4,3)}\}$, and thus, acts set transitively on it. (See figure \ref{fig:cycle}). 



\subsection{Explicit degrees}
Here we calculate the degrees of the main HRG's presented in this section. \\
Generally, let $(G, (K_{\{i\}})_{i\in I})$ be subgroup geometry system and let $X$ be its coset geometry. For $\tau \subsetneq I$, the size of the link of $K_{\tau}$ is $\sum_{i\in I\setminus \tau} \frac{|K_{\tau}|}{|K_{\tau\cup {\{i\}}}|}$. If $X$ has property \ref{cla:tran}, then it is hyper-regular and the size of the link of every $k< n$ dimensional simplex is $$
(n-k)\frac{|K_{\{0,\ldots,k\}}|}{|K_{\{0,\ldots,k,k+1\}}|}
$$\\
Thus, for the symmetrization of X, $X^{\circledast S_{I}}= (G^{(n+1)!}, (\prod_{\pi \in S_I} K_{\{\pi{(i)}\}})_{i\in I})$, every link of a $k$-dimensional simplex is of size 
$$
\sum_{j>k}\dfrac{|\prod_{\pi \in S_{I}} K_{\pi(\{0,\ldots,k\})}|}{|\prod_{\pi \in S_{i}} K_{\pi(\{0,\ldots,k,j\})}|} = (n-k) \prod_{\pi \in S_{I}} \frac{|K_{\pi(\{0,\ldots,k\})}|}{|K_{\pi(\{0,\ldots,k, k+1\})}|}
$$
And by the symmetry of the construction
$$
= (n-k) \bigg[\prod_{{\substack{|\tau|=k+1; \\ \tau\subset I\setminus \{b\}}}} \frac{|K_{\tau}|}{|K_{\tau \cup \{b\})}|}\bigg]^{\frac{(n+1)!}{\binom{n}{k+1}}} 
$$
for a fixed $b\in I$, since every $\frac{|K_{\tau}|}{|K_{\tau\cup {\{b\}}}|}$ appears the same number of times in the product, and the number of elements in the product inside the squared paernthesis is ${|\{\tau\in I\setminus{\{b\}}:\:|\tau|=k+1\}|} ={\binom{n}{k+1}}$.

\subsubsection{Elementary matrices groups}
We'll need a more explicit characterization of the subgroups. Reminder: $T= \{r_0 + r_1\cdot t: r_0, r_1 \in \bbF_q\}$. Let $T^{k} = \{r_0+ \sum_{i=1}^k r_i t^i: r_i\in \bbF_q\}$ and note that $|T^k| = q^{k+1}$.\\ Here's a version of Corollary 3.3 in \cite{kaufman2017simplicial}:
\begin{lemma}
Let $0\leq p< n $ and $0\leq a_0<\ldots<a_p \leq n$. For $a\neq b\in I$, Let $Arc(a, b)$ denote the set $\{a+1, a+2,\ldots,b-1\}$ (each element is taken $\mod n+1$ ) and as convention, let $Arc(a,a) = \{a+1, a+2,\ldots, a-1\}$. For $\tau=\{a_0, a_1,\ldots,a_p\}$, $K_\tau$ is the group composed of all the matrices $M=(M_{k,j})$ such that
$$
    M_{k,j}\in  
\begin{cases}
    \{1\}   & k=j\\
    T^{j-k} & k\neq j; \exists i \text{ s.t } \{k, k+1,\ldots,j-1\} \subseteq Arc(a_i, a_{i+1}) \\
    0,      & \text{otherwise}
\end{cases}
$$
where $j-k$ and $k, k+1, \ldots, j-1$ above are taken $\mod (n+1)$, and $i+1 \mod p+1$.
\end{lemma}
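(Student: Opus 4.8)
The plan is to reduce to the single-subgroup case and then intersect. Recall that by definition $K_\tau=\bigcap_{i\in\tau}K_{\{i\}}$, so it suffices to (a) describe each $K_{\{a_i\}}$ explicitly as a matrix set, and (b) intersect these descriptions over $i=0,\dots,p$ and verify that the result is exactly the set in the statement.

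For step (a), fix $i\in I$ and let $U_{\{i\}}$ be the set of matrices $M=(M_{k,j})$ with $M_{k,k}=1$, with $M_{k,j}\in T^{(j-k)\bmod(n+1)}$ whenever the cyclic arc $\{k,k+1,\dots,j-1\}$ (indices mod $n+1$) does not contain $i$, and with $M_{k,j}=0$ otherwise; this is the $p=0$ instance of the claimed formula, since $\mathrm{Arc}(i,i)=I\setminus\{i\}$. I would prove $K_{\{i\}}=U_{\{i\}}$ in two steps. First, $K_{\{i\}}\subseteq U_{\{i\}}$: each generator $e_{j,j+1}(m)$ with $j\neq i$ lies in $U_{\{i\}}$, and $U_{\{i\}}$ is closed under matrix multiplication and inversion, because a nonzero summand $M_{k,l}M'_{l,j}$ of $(MM')_{k,j}$ forces the arcs $\{k,\dots,l-1\}$ and $\{l,\dots,j-1\}$ to avoid $i$, hence so does their union $\{k,\dots,j-1\}$, and $M_{k,l}M'_{l,j}\in T^{l-k}\cdot T^{j-l}\subseteq T^{j-k}$. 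Second, $U_{\{i\}}\subseteq K_{\{i\}}$: using the commutator identity $[e_{a,b}(r),e_{b,c}(s)]=e_{a,c}(rs)$ (valid for distinct $a,b,c$, which holds inside a non-wrapping arc) together with $e_{k,j}(r)e_{k,j}(r')=e_{k,j}(r+r')$, with $T^{a}T^{b}\subseteq T^{a+b}$, and with the fact that $T\cdot T^{m}$ additively generates $T^{m+1}$, one builds every $e_{k,j}(r)$ with $r\in T^{j-k}$ whose arc $\{k,\dots,j-1\}$ avoids $i$; then one clears the off-diagonal entries of an arbitrary $M\in U_{\{i\}}$ one at a time (say, in order of increasing arc length) to reduce $M$ to the identity. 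This is the content of Corollary 3.3 of \cite{kaufman2017simplicial}, which may simply be invoked rather than reproved.

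For step (b), a matrix $M$ lies in $\bigcap_{i=0}^{p}U_{\{a_i\}}$ iff $M_{k,k}=1$ and, for each off-diagonal pair $(k,j)$, either $a_i\notin\{k,\dots,j-1\}$ for all $i$ (so $M_{k,j}\in\bigcap_{i}T^{(j-k)\bmod(n+1)}=T^{(j-k)\bmod(n+1)}$) or $M_{k,j}=0$. The condition $\{k,\dots,j-1\}\cap\tau=\emptyset$ says the cyclic arc $\{k,\dots,j-1\}$ — a genuine connected arc, and not the full cycle since $j\neq k$ — is contained in the complement of $\tau$; that complement is the disjoint union of the arcs $\mathrm{Arc}(a_i,a_{i+1})$, $i\in\{0,\dots,p\}$ (indices mod $p+1$), so a connected arc disjoint from $\tau$ lies inside exactly one of them. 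Hence $\bigcap_{i}U_{\{a_i\}}$ is precisely the set in the statement, and since it equals $\bigcap_{i\in\tau}K_{\{a_i\}}=K_\tau$, an intersection of subgroups of $EL_{n+1}(\cR)$ and therefore a subgroup, we are done.

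The main obstacle is the inclusion $U_{\{i\}}\subseteq K_{\{i\}}$ in step (a): one must show that the additive module occupying each off-diagonal slot $(k,j)$ is exactly $T^{j-k}$ and not merely contained in it, which uses the concrete structure $T=\{r_0+r_1t\}$ and $T^{m}=\{\text{polynomials of degree}\le m\}$, and one must check that the commutator manipulations and the entry-clearing procedure never call for an elementary matrix that "wraps around" through the index $i$. Step (b) is pure bookkeeping with cyclic arcs and presents no real difficulty; if one is content to cite Corollary 3.3 of \cite{kaufman2017simplicial} for step (a), the whole proof reduces to that bookkeeping.
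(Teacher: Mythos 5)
Your proposal is correct and takes essentially the same route as the paper, which presents this lemma as ``a version of Corollary 3.3 in \cite{kaufman2017simplicial}'' and offers no proof beyond that citation. Your reduction to the single-subgroup case via the definition $K_\tau:=\bigcap_{i\in\tau}K_{\{i\}}$, followed by the cyclic-arc bookkeeping (a connected arc disjoint from $\tau$ lies in exactly one component $Arc(a_i,a_{i+1})$ of the complement), is sound, and the delicate point you flag --- that each admissible slot is filled by all of $T^{j-k}$ rather than merely contained in it --- is exactly what the cited corollary supplies.
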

Define, for $a\neq b \in I$
$$
f(a,b) = \sum_{\substack{k\neq j; \\ \{k, k+1,\ldots,j-1\} \subseteq Arc(a, b)}}\:  1+ (j-k \mod n+1)
$$
Then,
$$
|K_\tau| = q^{\sum_{i=0}^p f(a_i, a_{i+1})}
$$
Let $b\in I\setminus \tau$ and let $i$ be s.t $b\in Arc(a_i, a_{i+1})$, then
$$
\frac{|K_{\tau}|}{|K_{\tau\cup {\{b\}}}|} = q^{f(a_i,a_{i+1}) - f(a_i, b) - f(b, a_{i+1})}
$$
Clearly, the value of $f(a,b)$ depends only on the difference between $b$ and $a$. Hence, we define $g: [n] \rightarrow \bbN$: for every $a,b$ such that $|Arc(a,b)| = m\in [n]$, $g(m) := f(0, m+1) = f(a,b)$. Now,
$$
g(m) = \sum_{i=0}^{m-1} (i+2)(m-i) = \frac{m^3+6m^2+5m}{6}
$$
\newline
\paragraph{Examples}
\begin{itemize}
    \item \textbf{$(a,b)$-regular:} Take $EL_{3}(\bbF_{q}[t])$. As illustrated nicely in Dinur's notes \cite{HDX} the 2-dimensional coset geometry that generated by this group as described earlier, is $(2 q^{5}, q^2)$-regular.
    \item \textbf{$(a,b,c,d)$-regular:} Take the $4$-dimensional construction shown in \ref{sec:ad}, i.e., let \newline
$$X=X(G, (K_{\{0\}} \times K_{\{0\}},K_{\{1\}} \times K_{\{2\}}, K_{\{2\}} \times K_{\{4\}}, K_{\{3\}} \times K_{\{1\}}, K_{\{4\}} \times K_{\{3\}}))$$ \newline
For both our subgroup geometry systems, $X$ is \newline
$$\bigg(4\cdot \frac{|K_{\{0\}}|}{|K_{\{0,1\}}|} \frac{|K_{\{0\}}|}{|K_{\{0,2\}}|}, 3\cdot \frac{|K_{\{0,1\}}|}{|K_{\{0,1,2\}}|} \frac{|K_{\{0,2\}}|}{|K_{\{0,2,4\}}|}, 2\cdot \frac{|K_{\{0,1,2\}}|}{|K_{\{0,1,2,3\}}|} \frac{|K_{\{0,2,4\}}|}{|K_{\{0,2,4,1\}}|}, 1\cdot \frac{|K_{\{0,1,2,3\}}|}{|K_{\{0,1,2,3,4\}}|} \frac{|K_{\{0,2,4,1\}}|}{|K_{\{0,2,4,1,3\}}|}  \bigg)$$ \newline
regular (one can verify). Specifically, take $EL_5(\bbF_q[t])$. The degrees are \:\: $$\bigg(4\cdot q^{g(4)-g(3)} q^{g(4)-g(1)-g(2)}, 3\cdot q^{g(3)-g(2)} q^{g(2)-g(1)} , 2\cdot q^{g(2)-g(1)} q^{g(1)}, 1\cdot q^{g(1)} q^{g(1)}  \bigg) = (4 q^{35}, 3 q^{14}, 2 q^{7}, q^{4})$$
    \item \textbf{$(d_0, \ldots, d_{n-1})$-regular:} Symmetrization construction using $S_{(n+1)}$. For $1\leq k \leq n-1$,  the size of the link of every $k$-dimensional simplex
\newline
$$
d_{k} = (n-k)\bigg[\prod_{m=1}^{n-k} \Big( \prod_{j=1}^m q^{f(0,m+1) - f(0,j) - f(j,m+1)}\Big)^{\binom{n-m-1}{k-1}} \bigg]^{ \frac{(n+1)!}{\binom{n}{k+1}}}
$$
\newline
Inside the squared Parenthesis, we first calculate 
$$\prod_{\substack{|\tau|=k+1; \\ \tau\subset I\setminus \{b\}}} \frac{|K_{\tau}|}{|K_{\tau\cup {\{b\}}}|}$$
for a fixed $b$, by going over all arcs that $b$ can be in, i.e., for every $1\leq m \leq n-k$, $b$ can be inside $m$ different arcs of size $m$.  Every arc dictates $2$ out of the $k+1$ elements in $\tau$, so we have $\binom{n-m-1}{k-1}$ ways to complete it.\\
\\Let's simplify this expression. First focus on the exponent of $q$:
$$
\sum_{j=1}^{m} g(m) -(g(j-1)+g(m-j)) = \big[ m\cdot g(m) -2 \sum_{j=1}^m g(j-1) \big]= \frac{m^4+6m^3+11m^2+6m}{12}
$$
So
\newline
$$
d_{k} = (n-k)\cdot q^{{ \frac{(n+1)!}{\binom{n}{k+1}}} \sum_{m=1}^{n-k} \binom{n-m-1}{k-1} \frac{m^4+6m^3+11m^2+6m}{12}}
$$
\newline
(Sanity check: indeed, $d_{n-1}=q^{2 (n+1)!}$). \\
And $d_0$ is a bit different
$$
d_0 = n\cdot q^{{ \frac{(n+1)!}{n}} \sum_{j=1}^{n} g(n) - (g(j-1)+g(n-j))} =  n\cdot q^{{ (n+1)!} \frac{n^3+6n^2+11n+6}{12}}
$$
    \end{itemize}

\subsubsection{Affine permutation group}
Let $i, j\in I$ where $i<j$.
$$
K_{\{j\}} \cong K_{\{i\}} \cong S_{(n+1)}; \:\:\:\:\:\:\:\:\: K_{\{i,j\}} \cong S_{(j-i)}\times S_{(n+1 - (j-i))}
$$
Thus
$$
\frac{|K_{\{i\}}|}{|K_{\{i,j\}}|} = \frac{|K_{\{j\}}|}{|K_{\{i,j\}}|} = \frac{(n+1)!}{(j-i)!(n+1 - (j-i))!} = \binom{n+1}{j-i}
$$
\newline
Now, for $1\leq k \leq n-1$, let $a_0<a_1<\ldots<a_k \in I$.
$$
K_{\{a_0,\ldots,a_k\}} \cong \langle\{(i, i+1 \mod n+1): i \notin \{a_0,\ldots,a_k\} \}\rangle \cong  S_{(a_1-a_0)}\times S_{(a_2-a_1)}\times \ldots \times S_{(a_0-a_p \mod n+1)}
$$
Let $b\in I$. If $b\in (a_j, a_{j}+1,\ldots, a_{j+1})$, then
$$
\frac{|K_{\{a_0,\ldots,a_p\}}| }{|K_{\{a_0,\ldots,a_j,b,a_{j+1},\ldots,a_p\}}| } = \frac{(a_{j+1} - a_j)!}{(b-a_j)!(a_{j+1}-b)!} = \binom{a_{j+1}-a_j}{b-{a_j}}
$$\newline
Where ${a_{j+1}-a_j}, {b-{a_j}}$ are taken modulo $n+1$ if needed.
\newline
\paragraph{Examples}
\begin{itemize}
    \item \textbf{$(a,b)$-regular:} $(\tilde{S}_{3r}, (K_{\{0\}}, K_{\{r\}}, K_{\{2r\}}))$ is subgroup geometry system (see \ref{lem:affine}), thus the degrees of $X(\tilde{S}_{3r}, (K_{\{0\}}, K_{\{r\}}, K_{\{2r\}}))$ are
    $$\Big(\frac{K_{\{r\}}}{K_{\{0, r\}}} + \frac{K_{\{r\}}}{K_{\{r, 2r\}}},\: \frac{K_{\{0, r\}}}{K_{\{0,r,2r\}}}\Big)\:=\:(2 \binom{3r}{r}, \binom{2r}{r})$$
    (same as the combinatorial construction in \ref{subsub:3r}).

    \item \textbf{$(a,b,c,d)$-regular:} Take $G= \tilde{S}_5$ with subgroup construction as in \ref{sec:ad} (for $n=4$). The degrees are
$$ \:\: \bigg(4\cdot \binom{5}{1} \binom{5}{2}, 3\cdot \binom{4}{1} \binom{3}{2} , 2\cdot \binom{3}{1} \binom{2}{1}, 1\cdot \binom{2}{1} \binom{2}{1}  \bigg) = (200, 36, 12, 4)$$
Actually, for $r\geq 1$, a similar construction using $\tilde{S}_{5 r}$ and $K_{\{i \cdot r\}}$ for $i=0,1,\ldots,4$, yields a
$$ \:\: \bigg(4\cdot \binom{5r}{r} \binom{5r}{2r}, 3\cdot \binom{4r}{r} \binom{3r}{2r} , 2\cdot \binom{3r}{r} \binom{2r}{r}, 1\cdot \binom{2r}{r} \binom{2r}{r}  \bigg)$$
regular graph.
\item \textbf{$(d_0, \ldots, d_{n-1})$-regular:} Symmetrization construction using $S_{(n+1)}$.
In a similar way to the previous case,
$$
d_0 = n \cdot \Big[\prod_{j=1}^n \binom{n+1}{j}\Big]^{\frac{(n+1)!}{n}}
$$
and for $1\leq k \leq n-1$
$$
d_k = (n-k)\bigg[\prod_{m=1}^{n-k} \Big[ \prod_{j=1}^m \binom{m+1}{j}\Big]^{\binom{n-m-1}{k-1}} \bigg]^{ \frac{(n+1)!}{\binom{n}{k+1}}}
$$
\\(Sanity check: indeed, $d_{n-1}=2^{(n+1)!}$)
\end{itemize}

\section{Further questions}
\begin{enumerate}
    \item \label{sum:1} We saw that the $\circledast$ product preserves the expansion of connected biregular graphs. That's not always true for $n$-partite products with $n > 2$: In particular, when $n>2$, the product of connected graphs is not necessarily connected. We know that a sufficient condition for the product to be connected is if the graphs are pure. In that case, what can we say about the expansion of an $n$-partite product? i.e., given $G_1, G_2$ pure $n$-partite clique complexes, is there a general upper bound for $\lambda_2(G_1\circledast G_2)$, tighter than the one obtained by the Trickling Down Theorem?
    \item Is there a probabilistic construction of type-regular expanders? this will yield, of course, a probabilistic construction of hyper-regular expander. 
\end{enumerate}
\medskip
 \section{Acknowledgements}
 The authors wish to thank both Uri Bader and Alex Lubotzky for their patient and lucid explanations, and Yotam Dikstein and Irit Dinur for introducing them to the topic.

\end{document}